\newcommand{\commentMR}[1]{{}} 
\newcommand{\newMR}{} 
\newcommand{\Z}{\mathbb Z}
\newcommand{\R}{\mathbb R}
\newcommand{\g}{\mathfrak{g}}
\newcommand{\p}{\mathfrak{p}}
\newcommand{\h}{\mathfrak{h}}
\newcommand{\aaa}{\mathfrak{a}}
\newcommand{\mcG}{\mathcal G}
\newcommand{\mcO}{\mathcal O}
\newcommand{\mcM}{\mathcal M}
\newcommand{\mcF}{\mathcal F}
\newcommand{\mcJ}{\mathcal J}
\newcommand{\mcA}{\mathcal A}
\newcommand{\und}{\underline}
\newcommand{\pa}{\partial}
\newcommand{\T}{\mathrm{T}}
\newcommand{\F}{\mathrm{F}}
\newcommand{\G}{\mathrm{G}}
\newcommand{\V}{\mathrm{V}}
\newcommand{\oT}{\mathbf{T}}
\newcommand{\gr}{\mathrm{gr}}
\newcommand{\red}{\mathrm{red}}
\newcommand{\dd}{\mathrm{d}} 
\newcommand{\pr}{\mathrm{pr}}
\newcommand{\Vb}{\mathrm{Vb}}
\newcommand{\At}{\mathrm{At}} 
\newcommand{\id}{\mathrm{id}}
\newcommand{\Euler}{\mathbbm{E}}
\newcommand{\Sec}{\mathrm{\Gamma}}
\newcommand{\wh}{\widehat}
\newcommand{\vareps}{\varepsilon}
\newcommand{\SLie}{\mathrm{sLieAlg}}
\newcommand{\GrLie}{\mathrm{grLieAlg}}
\newcommand{\DVB}{{\textbf{DVB }}}
\newcommand{\DVBs}{{\textbf{DVBs}}}
\newcommand{\rd}[1]{\left[{#1}\right]} 
\newcommand{\Lie}{\operatorname{Lie}}
\newcommand{\End}{\operatorname{End}}
\newcommand{\Hom}{\operatorname{Hom}}
\newcommand{\Der}{\operatorname{Der}}
\newcommand{\Aut}{\operatorname{Aut}}
\newcommand{\Sym}{\operatorname{Sym}}
\newcommand{\ad}{\operatorname{ad}}
\newtheorem{theorem}{Theorem}
\newtheorem{lemma}[theorem]{Lemma}
\newtheorem{definition}[theorem]{Definition}
\newtheorem{corollary}[theorem]{Corollary}
\newtheorem{proposition}[theorem]{Proposition}
\newtheorem{example}[theorem]{Example}
\newtheorem{remark}[theorem]{Remark} 
\begin{document}
\title[Graded covering of a supermanifold I.  The case of a Lie supergroup]
	{Graded covering of a supermanifold I. \\ The case of a Lie supergroup}
	
\dedicatory{Dedicated to the memory of Arkady Onishchik, 1933–2019}

\author{Mikolaj Rotkiewicz and Elizaveta Vishnyakova}

\begin{abstract}
We generalize the Donagi and Witten construction of a first obstruction class for splitting of a  supermanifold via differential operators using the theory of $n$-fold vector bundles and graded manifolds.  Applying the generalized Donagi--Witten construction we obtain a family of embeddings of the category of supermanifolds into the category of $n$-fold vector bundles and into the category of graded manifolds. This leads to a realization of any non-split supermanifold in terms of a collection of vector bundles {\newMR and some morphism between them.} 
Further we study the images of these embeddings into the category of graded manifolds in the case of a Lie supergroup and a Lie superalgebra. We show that these images satisfy universal property of a graded covering or a graded semicovering.

\end{abstract}

\maketitle


\section{Introduction}

This is the first part of a series of papers about $\Z$-graded coverings of a supermanifold. In this paper we investigate the case of a Lie supergroup and its Lie superalgebra. In \cite[Section 2]{Witten Atiyah classes} Donagi and Witten gave a description of the first obstruction class $\omega_2$ for a supermanifold to be split  via differential operators. More precisely, Donagi and Witten constructed an exact sequence of locally free sheaves with Atiyah class $\omega_2$. This is a very interesting and important observation that pure even vector bundles can keep certain information about the original (non-split) supermanifold. In the case a supermanifold has odd dimension $2$, the exact sequence of Donagi and Witten keeps the whole information of this supermanifold.

In this paper we generalize this idea. We show that the Donagi and Witten exact sequence corresponds to a  {\newMR double vector bundle with some additional structure, the whole information of which can be  reduced to a graded manifold of degree $2$. }
Further we
suggest to use  {\newMR a dual approach which leads to iterated differential forms, i.e.  functions on the iterated antitangent bundle (as it is explained below)} \commentMR{Are they iterated differential forms  in the sense of Vinogradov? I do not see you use this term any longer.} instead of differential operators, which  leads to a simplification of the Donagi and Witten construction and  suggests a natural generalization. For any (non-split) supermanifold   $\mcM$ of odd dimension $\leq n$ we construct a pure even geometric object, an $n$-fold vector bundle $\Vb_n(\mcM)$ which keeps the  whole information about the original supermanifold. Moreover the obtained $n$-fold vector bundle possesses additional symmetries. We interpret this in the following way: {\it the $n$-fold vector bundle arising from a supermanifold, can be reduced to a graded manifold of degree $n$}.  

Summing up we construct a family $\F_n$, where $n=2,3,\ldots \infty$, of functors from the category of supermanifolds of odd dimension $n$ to the category of $n$-fold vector bundles and to the category of graded manifolds of degree $n$, where the first functor $\F_2$ is a modification of the Donagi--Witten construction. Each functor $\F_n$ is a composition of four functors:  the $(n-1)$-iteratation of the  antitangent functor $\oT$, the functor split $\gr$ ({\newMR $\gr \mcM$ denotes the retract of the supermanifold $\mcM$})   \commentMR{The name ''functor split'' is not commonly used. Either say use retract or the retract which will be called the split functor}, which is defined in the category of supermanifolds, the functor parity change $\pi$, which is defined in the category of $n$-fold vector bundles \commentMR{I still see $\pi$ as the functor applied to a vector bundle: given an $n$-tuple VB we consider one of the vector bundle and take the parity reversion with respect to this VB structure. Namely, we change the parity of $\beta$ and so of $\alpha+\beta$ thus we consider VB with fibers of parities $\beta$ and $\alpha+\beta$.}
and another functor $\iota$, which we call inverse 
that was discovered independently in \cite{Grab} ({\newMR under the name the diagonalization functor}) and \cite{Vish}. We show that the functor $\F_n$ determines an embedding of the category of super\-manifolds of odd dimension $n$ (or smaller) into the category of graded manifolds of degree $n$. The functor $\F_{\infty}$ is a limit of functors $\F_n$. This functor defines an embedding of the category of supermanifolds into the category of  graded manifolds of any degree. This means that the graded manifold $\F_{\infty} (\mcM)$, the image of a (non-split) supermanifold  $\mcM$, contains the whole information about the original supermanofold $\mcM$. 

Moreover, for a Lie supergroup $\mcG$ we prove that $\F_{\infty} (\mcG)$ satisfies universal properties. These properties of $\F_{\infty} (\mcG)$ led use to introduce the notion of a graded covering of $\mcG$.
 That is: the covering space is unique up to isomorphism and every homomorphism from  a graded Lie supergroup to the Lie supergroup $\mcG$ can be lifted to $\F_{\infty} (\mcG)$. 
 We also introduce a notion of a graded semicovering of $\mcG$. This explains the meaning of other functors $\F_n$.   Our covering (semicovering) is in some sense a "local diffeomorphism" of Lie supergroups. Further we show that the Lie superalgebra of  $\F'_{\infty} (\g)$ is an example of a loop algebra construction by Allison, Berman, Faulkner, and Pianzola \cite{Allison}, see also \cite{Eld}. The loop algebra was used by several authors to investigate graded-simple (Lie) algebras. Therefore results of our paper establish a connection between Donagi--Witten observation and pure algebraic results. Our method leads to a notion of a graded covering and semicovering of any supermanifold as well. However due to technical difficulties of all proofs in this case we present the general theory of covering and semicovering spaces for any supermanifold in the second part of this research \cite{RV}. \commentMR{I see we have a graded covering of a supermanifold but we do not define it? I need to study it in more detail.}

Our results are especially interesting in the complex-analytic (and algebraic) category. The reason is the following. According to the Batchelor--Gawedzki Theorem any smooth supermanifold  is (non-canonically) split, that is its structure sheaf is isomorphic to the wedge power of a certain vector bundle. Therefore very often  we can study geometry of a split super\-manifold using geometry of vector bundles. This is not the case in the complex-analytic situation. The study of non-split super\-manifolds was initiated in \cite{Ber,Green}, where the first non-split supermanifold was described. Significant advances in this direction were achieved in the work of A.L.~Onishchik. Interest in this problem arose again after Donagi and Witten's papers \cite{Witten not projected,Witten Atiyah classes}, where they proved that the moduli space of super Riemann surfaces is non-split (or more generally not projected). To each (non-split) supermanifold $\mcM$ our functor $\F_n$ associates a graded manifold $\F_n(\mcM)$ and the image $\F_{n} (\mcM)$  keeps the whole information about the (non-split) supermanifold $\mcM$ for sufficiently large $n$.  Therefore we can study a non-split supermanifold in the category of graded manifolds using the tools of classical complex geometry due the fact that geometrically a graded manifold is a family of vector bundles. 

Our method studying a supermanifold is interesting in the case of a  split supermanifold as well, for instance if a split supermanifold possesses an additional structure. Any Lie supergroup $\mcG$ is {\newMR totally split, i.e. as a supermanifold it is a cartesian product of its underlying space $\mcG_0$ with a fixed odd vector space $\g_{\bar{1}}$ --- the odd part of the Lie superalgebra of $\mcG$. } 
However the supermanifold $\mcG$ has an additional structure which can be non-trivial: the Lie supergroup structure. We show that any graded Lie supergroup $\F_n(\mcG)$ contains the whole information about the original supergroup $\mcG$ for any $n$, non only for sufficiently large $n$.  
Moreover our result  is the first step toward a graded covering of a homogeneous supermanifold, which is very often non-split. For instance all (except of few exceptional cases) super-grassmannians and flag supermanifolds are non-split.

\textbf{Acknowledgements:} M.~R. was partially  supported by Capes/PrInt, UFMG. 
E.~V. was partially  supported by 
Coordenação de Aperfeiçoamento de Pessoal de Nível Superior - Brasil (CAPES) -- Finance Code
001, (Capes-Humboldt Research Fellowship)  and by Tomsk State University, Competitiveness Improvement Program. {\newMR M.~R. thanks  UFMG Department of Mathematics for its kind hospitality during
the realization of this work.}
The authors thank Peter Littelmann for his suggestion to investigate the case of a Lie supergroup and a Lie superalgebra.   This helped us to understand the meaning of the functor $\F_{\infty}$. We also thank 
Alexey Sharapov and Mikhail Borovoi for useful comments.

\section{Preliminaries}

\subsection{Lie superalgebras, graded Lie superalgebras and graded Lie superalgebras of type $\Delta$}

Throughout the paper we work over the field $\mathbb K=\mathbb C$ or $\mathbb R$. However the algebraic part of the paper holds true over any field of characteristic $0$.  We shall work in a supergeometry context. Our vector spaces  are $\mathbb Z_2$-graded, $V = V_{\bar 0} \oplus V_{\bar 1}$, elements of $V_{\bar i}\setminus \{0\}$ are called homogenous of \emph{parity} $\bar i\in \Z_2$. For $v \in V_{\bar i}$ we write $|v|=\bar i$.
Our main references on supergeomtery, in particular on Lie superalgebras, are  \cite{Kac,BLMS,Bern,Leites} and \cite{Var}.

\begin{definition}
	A Lie superalgebra is a $\mathbb Z_2$-graded  vector superspace 
$\mathfrak g = \mathfrak g_{\bar 0}\oplus \mathfrak g_{\bar 1}$ with  a graded bilinear operation $[\,, ]$ satisfying the following conditions:
\begin{equation}\label{ax:skew-symmetry}		
		 [x,y]=-(-1)^{|x||y|}[y,x],
\end{equation}
\begin{equation}\label{ax:Jacobi}
		[x, [y, z]]- (-1)^{|x||y|} [y, [x, z]] = [[x, y], z],
\end{equation}		
	where $x$, $y$, and $z$ are homogeneous elements. 
\end{definition}

 In more general setting, if $A$ is an abelian group, 
 an $A$-graded Lie superalgebra is a vector space $\g$ graded by an abelian group $A$, $\g = \oplus_{a\in A} \g_a$,  the bracket on $\g$ is $A$-graded, i.e. $[\g_a, \g_b]\subset \g_{a+b}$, and satisfies a generalized  Jacobi identity in which the sign $(-1)^{|x||y|}$ is replaced with $(-1)^{\delta(a)\delta(b)}$, where $\delta:A \to \mathbb{Z}_2$ is a group homomorphism.

\begin{definition}
    Let $\g$ be an $A$-graded Lie superalgebra. Then
    $$
    supp(\g)=\{\alpha\in A\,\,|\,\, \g_{\alpha}\ne \{0\} \}.
    $$
\end{definition}

In this paper most of the time we will work with $\mathbb Z^r$-graded Lie superalgebras. Let $\alpha_1,\ldots, \alpha_r$ are formal variables. 

\begin{definition}\label{def weight system}
	A  weight system is a pair $(\Delta, \chi)$, where $\Delta$ is a subset in $\mathbb Z\alpha_1\oplus \cdots \oplus \mathbb Z\alpha_r$ satisfying the following properties
	\begin{enumerate}
		\item $0 \in \Delta$ and $\alpha_i \in \Delta$, where $i=1,\ldots, r$;
		\item if $\delta\in \Delta$ and $\delta = \sum_i a_i \alpha_i$, where $a_i\in  \mathbb Z$, then $a_i\geqslant 0$,
	\end{enumerate}
	and $\chi:A\to \Z_{2}$, $\delta \mapsto |\delta|$, is a group homomorphism. We call $|\delta|$ the parity of $\delta$.
	A weight system $(\Delta, \chi)$ is called {\it multiplicity free}, if $\delta = \sum a_i \alpha_i\in \Delta$ implies $a_i\in \{0,1\}$.
\end{definition}

Let $(\Delta, \chi)$ be a weight system. Then $\Delta= \Delta_{\bar 0}\cup \Delta_{\bar 1}$, where $\Delta_{\bar 0}$ contains all even weights, while $\Delta_{\bar 1}$ contains all odd ones. Note that $0$ is always in $\Delta_{\bar 0}$.  A Lie super\-algebra $\mathfrak g= \mathfrak g_{\bar 0} \oplus \mathfrak g_{\bar 1}$ is called a graded Lie superalgebra of type $(\Delta, \chi)$, if $\g$ is a $\mathbb Z^r$-graded Lie superalgebra  of type $\Delta$ and
	$$
	\mathfrak g_{\bar 0} =  \bigoplus\limits_{\delta\in \Delta_{\bar 0}} \mathfrak g_{\delta} , \quad \mathfrak g_{\bar 1} =  \bigoplus\limits_{\delta\in \Delta_{\bar 1}} \mathfrak g_{\delta}.
	$$
Very often we will omit $\chi$ and write a Lie superalgebra of type $\Delta$ meaning a Lie superalgebra of type $(\Delta, \chi)$.

\subsection{Supermanifolds, graded manifolds and $n$-fold vector bundles}

\subsubsection{Supermanifolds} 

We consider a complex-analytic supermanifold in the sense of Berezin and Leites \cite{Bern,Leites}, see also \cite{BLMS}. Thus, a supermanifold
$\mcM = (\mcM_0,\mathcal{O}_{\mcM})$ of dimension $n|m$ is a $\mathbb{Z}_2$-graded
ringed space that is locally isomorphic to a super\-domain in
$\mathbb{C}^{n|m}$. Here the underlying space $\mcM_0$ is a complex-analytic manifold.  The dimension $n$ of the underlying manifold $\mcM_0$ is called {\it even dimension of $\mcM$}, while $m$ is called {\it odd dimension of $\mcM$}. A 
morphism $\mcM \to \mathcal{N}$ between two
supermanifolds is a morphism between $\mathbb{Z}_2$-graded
ringed spaces, this is, a pair $F = (F_{0},F^*)$, where $F_{0}:\mcM_0\to \mathcal N_0$ is a holomorphic mapping and $F^*: \mathcal{O}_{\mathcal N}\to (F_{0})_*(\mathcal{O}_{\mathcal M})$ is a homomorphism of sheaves of
$\mathbb{Z}_2$-graded ringed spaces. A morphism $F$ is called an isomorphism if $F$ is inver\-tible. A supermanifold $\mcM$  is called split, if its structure sheaf is isomorphic to $\bigwedge\mathcal E^*$, where $\mathcal E$ is a sheaf of sections of a certain vector bundle $\mathbb E[\bar 1]$. (Here $\mathbb E[\bar 1]$ means a usual vector bundle $\mathbb E$ additionally assumed that its local sections are odd.) We see that in this case the structure sheaf is $\Z$-graded.  A morphism of split supermanifolds is called split if it preserves the fixed $\Z$-gradings. Split supermanifolds with split morphisms form a category of split supermanifolds.

According to the Batchelor--Gawedzki Theorem any smooth supermanifold  is split. This is not true in the complex-analytic case, see \cite{Ber,Green}. However we can define a functor from the category of supermanifols to the category of split supermanifolds.    Let $\mathcal M=(\mathcal M_0,\mathcal O)$ be a supermanifold. (Sometimes we will omit $\mcM$ in $\mathcal O_{\mcM}$ if the meaning of $\mathcal O$ is clear from the context.) Then its structure sheaf possesses the following filtration
\begin{equation}\label{eq filtration}
    \mathcal O = \mathcal J^0 \supset \mathcal J \supset \mathcal J^2 \supset\cdots \supset \mathcal J^p \supset\cdots,
\end{equation}
where $\mathcal J$ is the sheaf of ideals generated by odd elements in $\mathcal O$. We define
$$
\mathrm{gr} (\mathcal M): = (\mathcal M_0,\mathrm{gr}\mathcal O),
$$
where
$$
\mathrm{gr}\mathcal O = \bigoplus_{p \geq 0} \mathcal J^p/\mathcal J^{p+1}.
$$
The supermanifold $\mathrm{gr}(\mathcal M)$ is split,  that is its structure sheaf is isomorphic to $\bigwedge \mathcal E^*$, where $\mathcal E^*= \mathcal J/\mathcal J^{2}$ is a locally free sheaf. Since any morphism $F$ of supermanifolds preserves the filtration (\ref{eq filtration}), the morphism $\gr (F)$ is defined.  Summing up, the functor $\mathrm{gr}$ is a functor from the category of super\-manifolds to the category of split supermanifolds, see for example \cite[Section 3.1]{Vish_funk} for details. We can apply the functor $\gr$ to a Lie supergroup $\mcG$ and we will get a split Lie supergroup $\gr(\mcG)$.  Later we also will define a corresponding functor $\gr'$ for Lie superalgebras so that $\Lie \circ \gr (\mcG) = \gr' \circ \Lie (\mcG)$ for any Lie supergroup $\mcG$.

\subsubsection{Graded manifolds of type $\Delta$, graded manifolds of degree $n$ and $r$-fold vector bundles}

Let us start with a notion of a graded manifold of type $(\Delta,\chi)$, where $(\Delta,\chi)$ is as in Definition \ref{def weight system}. Again very often we will omit $\chi$, and write a graded manifold of type $\Delta$.   Let
 $$
  V= \bigoplus_{\delta \in \Delta} V_{\delta}, \quad V_{\bar 0}= \bigoplus_{\delta \in \Delta_{\bar 0}} V_{\delta}, \quad V_{\bar 1}= \bigoplus_{\delta \in \Delta_{\bar 1}} V_{\delta}.
 $$
 Consider a $\mathbb Z^r$-graded ringed space $\mathcal U = (\mathcal U_0,\mathcal O_{\mathcal U})$, where $\mathcal U_0 \subset V^*_0$ is an open set, 
$\mathcal O_{\mathcal U}: = \mathcal F_{\mathcal U_0}\otimes_{\mathbb K} S^*(V)$  and
 $\mathcal F_{\mathcal U_0}$ is the sheaf of smooth or holomorphic  functions on $\mathcal U_0$. 
 
 \begin{definition}\label{def graded domain}
     We call the ringed space $\mathcal U$ a  graded domain of type $(\Delta,\chi)$ and of dimension $\{n_{\delta}\}_{\delta\in \Delta}$, where $n_{\delta} = \dim V_{\delta} $.
 \end{definition}
 
 Let us choose a basis  $(x_i)$ in $V_0$ and a basis $(\xi_j^{\delta})$ in any $ V_{\delta}$. Then the set $(x_i, \xi_j^{\delta})_{\delta \in \Delta\setminus \{0\}}$ is a system of local coordinates on $\mathcal U$. We assign the weight $0$ and the parity $\bar 0$ to $x_i$ and the weight $\delta$ and the parity $|\delta|$ to $\xi_j^{\delta}$. Such coordinates $(x_i, \xi_j^{\delta})$ are called graded.

\begin{definition}\label{def graded man of tupe Delta}
    A graded manifold of type $(\Delta,\chi)$ and of dimension $\{n_{\delta}\}_{\delta \in \Delta}$ is a $\mathbb Z^r$-graded ringed space $\mathcal N = (\mathcal N_0, \mathcal O_{\mathcal N})$, that is locally isomorphic to a graded domain of type $(\Delta,\chi)$ and of dimension $\{n_{\delta}\}_{\delta \in \Delta}.$

 A  morphism of graded manifolds of type $(\Delta,\chi)$ is a morphism of the corresponding $\mathbb Z^r$-graded ringed spaces. 
\end{definition}  

If $\Delta = \{0,1,\ldots, n\}$ and $\chi(1)=\bar 1$, a graded manifold of type $\Delta$ is also called a {\it graded manifold of degree $n$}. (See \cite{Roytenberg} for more information about graded manifolds of degree $n$.)  If $\Delta$ is multiplicity free, this is if $\delta = \sum_i a_i \alpha_i \in \Delta$ then $a_i\in \{0,1\}$, a graded manifold of type $\Delta$ is called an {\it $r$-fold vector bundle of type $\Delta$}.  This definition of an $r$-fold vector bundle is equivalent to a classical one as it was shown in \cite[Theorem 4.1]{GR}.  

\begin{remark}\label{rem projection of graded}
Now let $\Delta$ be as in Definition \ref{def weight system} and $\Delta'\subset \Delta$ satisfies the following property: if $\delta\in \Delta'$ and $\delta= \sum_i \delta_i$ for $\delta_i\in \Delta$, then $\delta_i$ are also in $\Delta'$. In this case to any graded manifold $\mathcal N$ of type $\Delta$ we can assign a graded manifold $\mathcal N'$ of type $\Delta'$.   A detailed description of this well-known construction can be found for example in \cite[Section 4.1]{Vish}. For instance to any graded manifold of degree $n$ we can assign a graded manifold of degree $0\leq n'<n$. In this case we have a natural morphism $\mathcal N\to \mathcal N'$, which is called projection. 
\end{remark}

\subsection{Lie supergroups and super Harish-Chandra pairs}
\subsubsection{Lie supergroups}
A {\it Lie supergroup} is a group object in
the category of supermanifolds.
In other words a Lie supergroup is a supermanifold $\mathcal G=(\mathcal G_0, \mathcal O_{\mathcal G})$ with three morphisms:
$\mu:\mathcal G\times \mathcal G\to \mathcal G$
(the multiplication morphism), $\kappa:\mathcal G\to \mathcal G$ (the inversion morphism),
$\varepsilon:(\mathrm{pt},\mathbb C)\to \mathcal G$ (the identity
morphism). Moreover, these morphisms have to satisfy the usual
conditions, modeling the group axioms. The underlying manifold $\mathcal G_0$ of $\mathcal G$
is a smooth or complex-analytic Lie group. As in the theory of Lie groups and Lie algebras, we can define the Lie superalgebra $\mathfrak{g}=\Lie(\mcG)$ of a Lie group $\mathcal G$. By definition, $\mathfrak{g}$ is the subalgebra of the Lie superalgebra of vector fields on
$\mathcal G$ consisting of all right invariant
vector fields on $\mathcal G$. Any
right invariant vector field $Y\in \mathfrak{g}$ has the following form
\begin{equation}
\label{left inv vect field}
Y=(Y_e\otimes \mathrm{id})\circ \mu^*,
\end{equation}
where $Y_e\in (\mathfrak m_e/\mathfrak m^2_e)^*$ and $\mathfrak m_e$ is the maximal ideal of $(\mathcal O_{\mathcal G})_e$. Note that as the case of classical geometry the vector superspace $(\mathfrak m_e/\mathfrak m^2_e)^*$ may be identified with the vector superspace of all maps $D: (\mathcal O_{\mathcal G})_e \to \mathbb{C}$ satisfying the Leibniz rule. 
Further the map $Y_e\mapsto Y= (Y_e\otimes \mathrm{id})\circ \mu^*$ is an
isomorphism of $(\mathfrak m_e/\mathfrak m^2_e)^*$ onto
$\mathfrak{g}$.

Similarly we can define a {\it graded Lie supergroup of type $\Delta$} as a group object in the category of graded manifolds of type $\Delta$ or a {\it graded Lie supergroup of degree $n$} as a group object in the category of graded manifolds of degree $n$. 
 Let us consider some examples of Lie supergroups with an additional gradation in the structure sheaf. 

\begin{example}[Lie supergroup $GL(V)$] The space of endomorphisms of a vector superspace $V = V_{\bar{0}} \oplus V_{\bar{1}}$ has a natural gradation by integer numbers $-1, 0, 1$:
$$
\End(V) = \bigoplus\limits_{i=-1, 0, 1} \End^i(V),
$$ 
where 
\begin{align*}
    \End(V)^0 = \End(V_{\bar{0}}) &\oplus \End(V_{\bar{1}}),\quad \End(V)^1 = \Hom(V_{\bar{0}},V_{\bar{1}}),\\
    &\End(V)^{-1} = \Hom(V_{\bar{1}},V_{\bar{0}})
\end{align*}
giving rise to a $\Z$-graded Lie superalgebra $\mathfrak{gl}(V)$. It integrates to the  $\Z$-graded Lie supergroup $\mathrm{GL}(V)$. Consider the case $\dim V =(1|1)$. The entries of a matrix 
$$
A = \begin{bmatrix}
       a & \alpha \\ \beta & b
    \end{bmatrix}\in \mathrm{GL}(\mathbb K^{1|1}),
    $$ 
    constitute a graded coordinate domain, the subdomain of $\mathbb K^{2|2}$, the coordinates $a$, $b$ are of degree $0$, and $\alpha$, $\beta$ are of degrees $-1$ and $1$, respectively. The matrix multiplication respects the assigned grading, e.g. $m^* (\alpha) = a' \alpha'' + \alpha' b''$ is of degree $0=0+0= 1+(-1)$. Note that $\mathrm{GL}(V)$ with this gradation is not a graded manifold of degree $n$, since this supermanifold possesses graded coordinates of negative degree.  
\end{example}

\begin{remark}
In this paper we denote by $\oT$ the {\bf antitangent functor}. This means that  $\oT \mcM$ is the vector bundle obtained from the tangent bundle $\T \mcM$ by means of the parity functor. If $(x^A)$ are even/odd coordinates on $\mcM$ then the fiber coordinates on $\oT \mcM$  will be denoted by $(\dd x^A)$ and the parity of $\dd x^A$ is the parity of $x^A$ plus one modulo two. 
\end{remark}

A canonical example of graded Lie supergroups of degree $n$ comes from the tangent lifts. 
\begin{example} Higher order tangent bundles, not to be confused with the related iterated
tangent bundles, are the natural geometric home of higher derivative Lagrangian mechanics \cite{Bruce}.
A higher order ($k$-order)  tangent bundle $\T^{k} \mathcal{M}$ of a supermanifold $\mathcal{M}$ can be defined in a natural way. By applying the functor $\T^k$ to the Lie supergroup $\mcG$ and the structure morphisms of $\mcG$  we get a graded Lie supergroup of type $(\Delta,\chi)$, where  $\Delta=\{0, 1, \ldots, k\}$ and {\newMR $\chi(1)=\bar 0$}.  
\end{example}
\commentMR{There is no superization of second order graded bundles. This means there is no passage from $\T^2 \mcM$ to $\oT^2 \mcM$ as there is no equivalence between symmetric and skew-symmetric DVB. $\T^2 \mcM$ has jet of curves space interpretation. $\oT^2 \mcM$ is only a generalized suepermanifold --- better not to include this in present paper.}
\commentMR{This example was completely wrong - needs clarification and correction. We have to things: iterated tangent or anti-tangent bundle and $k$-order tangent bundle. However, $k$-order tangent bundle is not usually a supermanifold but a generalized supermanifold! It is given by functor of points - but this functor is nor representable. }
\commentMR{I suggest to add A. Bruce, ON CURVES AND JETS OF CURVES ON SUPERMANIFOLDS. This will give us some citations. Note:
Higher order tangent bundles, not to be confused with the related iterated
tangent bundles, are the natural geometric  and thus a clear geometric understanding of the super-versions is important
for general supermechanics. For time-being we can omit this example.}


Another interesting example of $\Z$-gradation on a Lie supergroup was noticed by V.~Kac: 
\begin{example} Let $V = V_{\bar{0}} \oplus V_{\bar{1}}$ be a vector superspace equipped with a non-degenerate even symmetric bilinear form $Q$, i.e. 
$Q(x, y) = (-1)^{|x||y|} Q(y, x)$ for homogeneous $x, y\in V$ and $Q:V\times V\to \mathbb K$ is an even map.   It follows that $V_{\bar{0}} \perp V_{\bar{1}}$ and $Q$ restricted to $V_{\bar{0}}$ (resp. $V_{\bar{1}}$) is symmetric (resp. skew-symmetric). Thus the dimension of $V_{\bar{1}}$ is even and one can find   lagrangian subspaces $L, L' \subset V_{\bar{1}}$ such that $V_{\bar{1}} = L \oplus L'$, where $L'$ is naturally identified with the dual to $L$ and $Q|_{V_{\bar{1}}}$ has the following natural form with respect to this decomposition:  
$$
Q((l_1, l_1'), (l_2, l_2')) = l_1'(l_2) - l_2'(l_1),
$$     
where $l_i\in L$ and $l'_i\in L'$. 
The orthosymplectic Lie superalgebra $\mathfrak{osp}(V, Q)$
is spanned by homogeneous endomorphism $T: V\to V$ such that 
$$
    Q(T x, y) + (-1)^{|T||x|} Q(x, Ty)=0
$$
for any homogeneous $x, y\in V$. 
It follows that $\mathfrak{osp}(V, Q)_{\bar{0}} = \mathfrak{so}(V_{\bar 0})\oplus \mathfrak{sp}(V_{\bar{1}})$ and $\mathfrak{osp}(V, Q)_{\bar{1}} = V_{\bar 0} \otimes V_{\bar 1}$. There are well known isomorphisms $\mathfrak{so}(W) = W \wedge W$ and $\mathfrak{sp}(W) = S^2 W$. Assigning the weight $1$ to $L$ and the weight $-1$ to $L'$ one find  a canonical $\Z$-grading on $\mathfrak{osp}(V)$ with non-zero parts in degrees $\pm2$, $\pm1$ and $0$:  
$$
\mathfrak{osp}(V) = S^2(L') \oplus (V_{\bar 0} \otimes L') \oplus (L' \otimes L \oplus \V_{\bar 0}\wedge V_{\bar 0}) \oplus (V_{\bar 0} \otimes L) \oplus S^2 L.  
$$
Thus the corresponding Lie supergroup, the orthosymplectic Lie supergroup, is also naturally $\mathbb{Z}$-graded.
\end{example}

\subsubsection{Super Harish-Chandra pairs}
To study a Lie supergroup one uses the theo\-ry of super Harish-Chandra pairs, see \cite{Bern} and also \cite{BCC,Fioresi,ViLieSupergroup}.

\begin{definition}
	A  super Harish-Chandra pair $(G, \g)$  consists of 
	a Lie superalgebra $\g=\mathfrak{g}_{\bar
		0}\oplus\mathfrak{g}_{\bar 1}$,    a Lie group $G$    such that $\mathfrak{g}_{\bar 0} = \mathrm{Lie} (G)$, and
	  an action $\alpha$  of $G$ on $\g$ by authomorphisms such that 
	  \begin{itemize}
	    \item for any $g\in G$ the action $\alpha(g)$ restricted to $\g_{\bar{0}}$ coincides with the adjoint action $Ad(g)$;
	\item the differential $(d\alpha)_e: \g_0 \to \T_e\Aut(\g) =  \Der(\g)$ at the identity $e\in G$ coincides with
		the adjoint representation $\ad$ of $\mathfrak g_{\bar 0}$ in $\mathfrak g$.
	\end{itemize}
\end{definition}
Super Harish-Chandra pairs form a category.  
 The following theorem was proved in \cite{ViLieSupergroup} for complex-analytic Lie supergroups.

\begin{theorem}\label{theor HC pairs}
	The category of complex-analytic Lie supergroups is
	equivalent to the category of complex-analytic super Harish-Chandra pairs.
\end{theorem}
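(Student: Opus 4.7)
The plan is to construct a pair of quasi-inverse functors between the two categories and verify the required compatibilities. This is the standard strategy for Harish--Chandra type theorems, but each ingredient has to be adapted to the complex-analytic super setting.

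First, I would set up the forward functor $\Phi:\mathcal G \mapsto (\mathcal G_0,\g,\alpha)$. Here $\g=\Lie(\mathcal G)$ is already defined in the excerpt via right-invariant vector fields, and the closed embedding $\mathcal G_0 \hookrightarrow \mathcal G$ given by the reduction morphism allows one to conjugate: for $g\in \mathcal G_0$, the inner automorphism of $\mathcal G$ induced by $g$ fixes the identity $\varepsilon$, hence acts on $(\mathfrak m_e/\mathfrak m_e^2)^*\simeq \g$, and this defines $\alpha(g)\in \Aut(\g)$. The compatibilities (restriction to $\g_{\bar 0}$ is $\mathrm{Ad}$, differential is $\ad$) are formal consequences of the chain rule in the supergeometric category. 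Functoriality of $\Phi$ on morphisms $F:\mathcal G\to \mathcal H$ is immediate from the fact that $F$ intertwines the group structure.

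Next, and this is the main step, I would construct the backward functor $\Psi:(G,\g,\alpha)\mapsto \mathcal G$. The underlying manifold is $G$ itself. For the structure sheaf I would set, for every open $U\subset G$,
\begin{equation*}
  \mathcal O_{\mathcal G}(U) := \Hom_{U(\g_{\bar 0})}\bigl(U(\g),\,\mathcal O_G(U)\bigr),
\end{equation*}
where $\mathcal O_G$ is the sheaf of holomorphic functions on $G$, the right $U(\g_{\bar 0})$-module structure on $U(\g)$ comes from multiplication, and the action on $\mathcal O_G(U)$ is by right-invariant differential operators (produced from $\g_{\bar 0}=\Lie(G)$). A choice of homogeneous basis of $\g_{\bar 1}$ gives a PBW-isomorphism of sheaves $\mathcal O_{\mathcal G}\simeq \mathcal O_G\otimes \bigwedge \g_{\bar 1}^*$, so $\mathcal G=(G,\mathcal O_{\mathcal G})$ is a supermanifold of dimension $(\dim G\mid \dim\g_{\bar 1})$. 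The multiplication morphism $\mu^*$ is the one that makes $\mathcal O_{\mathcal G}$ a supersheaf of coalgebras over the group coalgebra of $G$, that is,
\begin{equation*}
  (\mu^*\varphi)(g,h)(u\otimes v)=\varphi(gh)\bigl(u\cdot \alpha(h^{-1})(v)\bigr),\qquad u,v\in U(\g),
\end{equation*}
once one expands using the coproduct $\Delta:U(\g)\to U(\g)\otimes U(\g)$; the action $\alpha$ is precisely what is needed to render this a well-defined morphism of sheaves of graded algebras. The inversion $\kappa^*$ and unit $\varepsilon^*$ are defined by the analogous formulas using the antipode of $U(\g)$ and the counit respectively. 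The group axioms reduce to the Hopf algebra axioms of $U(\g)$ together with the group axioms for $G$ and the cocycle-type identity expressing that $\alpha$ acts by automorphisms compatible with $\ad$.

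Finally I would verify the two natural isomorphisms. For $\Phi\circ\Psi\simeq\mathrm{id}$, one checks that $\Lie$ applied to the above construction recovers $\g$ (this amounts to identifying $(\mathfrak m_e/\mathfrak m_e^2)^*$ with $\g$ via evaluation of $U(\g)$-module maps at the unit) and that the conjugation action recovers $\alpha$ (which holds because $\mu^*$ was built from $\alpha$). For $\Psi\circ\Phi\simeq\mathrm{id}$, one constructs, for any Lie supergroup $\mathcal G$, an evaluation map $\Psi(\Phi(\mathcal G))\to \mathcal G$ sending $\varphi\mapsto \varphi(u)$ evaluated via right-invariant operators; the PBW theorem for $U(\g)$ together with the classical fact that a supermanifold is determined locally by functions trivializing its odd directions shows this is an isomorphism of ringed spaces and compatible with multiplication. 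Functoriality on morphisms of super Harish--Chandra pairs is then straightforward from the universal property of $U(\g)$. The hardest technical point is checking that $\mu^*$ defined above is a well-defined morphism of $\mathbb Z_2$-graded sheaves of algebras and satisfies associativity; all other verifications are consequences of the Hopf-algebraic structure.
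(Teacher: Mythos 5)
Your proposal follows essentially the same route as the paper: the paper does not prove Theorem \ref{theor HC pairs} itself but cites \cite{ViLieSupergroup}, and the construction it recalls in (\ref{eq structure sheaf of G})--(\ref{eq umnozh in supergroup}) --- the structure sheaf $\mathcal O=\Hom_{\mathcal U(\g_{\bar 0})}(\mathcal U(\g),\mathcal F_{G})$ with multiplication, inversion and unit defined through the Hopf structure of $\mathcal U(\g)$ and the action $\alpha$ --- is exactly your backward functor $\Psi$, and your verification plan (PBW trivialization, evaluation map, reduction of the group axioms to the Hopf axioms) is the standard one carried out there. The only point to watch is a convention mismatch: in the paper's formula (\ref{eq umnozh in supergroup}) the twist $\alpha(h^{-1})$ is applied to the \emph{first} tensor factor, whereas you apply it to the second; this must be made consistent with the choice of right-invariant vector fields in the definition of $\Lie(\mcG)$.
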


A similar result in the real case was obtained in \cite{Kostant}, the algebraic case was treated by several authors, see for example \cite{Gav,Masuoka,MasuokaShibata} and references therein.  Theorem \ref{theor HC pairs} implies that any Lie supergroup is globally split, that is, its structure sheaf is isomorphic to a wedge product of a certain trivial vector bundle. More explanation of this fact will be given later.

 The notion of super Harish-Chandra pair has an obvious $A$-graded generalization, where $A$ is an abelian group. Namely, we assume that $\g$ is an $A$-graded Lie superalgebra, the Lie algebra of $G$ is $\g_0$, where $\g_0$ is the part of $\g$ in degree 0, $\alpha(g): \g \to \g$, $g\in G$, is an automorphism of $\g$, in particular it preserves the $A$-gradation and  $\dd_e\alpha (X) = [X, \cdot]_\g$ for any $X\in \g_0$. In \cite[Theorem 5.6]{Jubin} it was noticed that an analogue of Theorem \ref{theor HC pairs} holds true for graded Lie supergroups of degree $n$.
We will also work with super Harish-Chandra pairs of type $\Delta$. A super Harish-Chandra pairs of type $\Delta$ is a $\mathbb Z^r$-graded super Harish-Chandra pair $(G,\g)$, where $\g$ is a  $\mathbb Z^r$-graded Lie superalgebra of type $\Delta$. Recall that in this case $\mathbb Z^r$ is generated by the set $\Delta$, with additional agreement about parities of generators of $\Delta$, see Definition \ref{def weight system}. In this case repeating argument \cite[Theorem 5.6]{Jubin}, we get an analogue of Theorem \ref{theor HC pairs} as well.

\begin{example} Let $V = \oplus_{k=0}^n V_k$ be a non-negatively $\Z$-graded vector space with $\dim V < \infty$. Denote by $\End_q(V)$, where $q\in Z $, the space of endomorphisms $T: V\to V$ increasing the degree by $q$, i.e. $T: V\to V$  such that $T|_{V_i}: V_i \to V_{i+q}$. Thus $\und{\End}(V) =  \oplus_{q=-n}^n \End_q(V)$ is  a $\Z$-graded vector space. It will be consi\-dered as a  $\Z$-graded Lie superalgebra with the bracket given by the graded commutator.  The corresponding $\Z$-graded Lie supergroup $\mathrm{GL}(V)$ has the body $G_0 = \times_{i=0}^n GL(V_i)$ and its sheaf of functions is given by the super symmetric algebra over $F = \oplus_{0\leq i\neq j\leq n} V_i \otimes V_j^*$,
as $V_i \otimes V_j^* = \Hom(V_i, V_j)^*$, 
i.e. $\mcO(U) = \mathcal{C}^\infty(U)\otimes \Sym^{} F$. The gradation in $F$ is obvious, $|V_i\otimes V_j^*| = i-j$ and the group multiplication preserve this gradation. 
The graded Harish-Chandra pair corresponding to $GL(V)$ is $(G_0, \und{\End}(V))$ equipped with the adjoint action of $G_0$ on 
$\und{\End}(V)$.  The language of Harish-Chandra pairs allows us to study some infinite dimensional generalizations of Lie supergroups in a convienient way. We simply drop the assumption that $\g$ has  finite dimension. For example, if $V = \oplus_{k=0}^{\infty} V_k$ is a $\Z$-graded vector space such that $\dim V_k < \infty$, but the dimension of $V$ can be infinite, then $GL(V)$ can be understood as a pair consisting of the direct sum  of $GL(V_i)$,  $G_0 = \oplus_{i=0}^{\infty} GL(V_i)$ and the natural action of  $G_0$ on $\und{\End}(V) = \oplus_{i, j\geq 0} \Hom(V_i, V_j)$. 
\end{example}



\subsubsection{Construction of the Lie supergroup corresponding to a super Harish-Chandra pair}
Let us remind a reader how to construct a Lie supergroup $\mathcal G$ (or a graded Lie supergroup of type $\Delta$) using a given super (or of type $\Delta$) Harish-Chandra pair $(G,\mathfrak g)$. Denote by $\mathcal U(\mathfrak h)$ the universal enveloping algebra of Lie superalgebra $\mathfrak h$.  We need to define a structure sheaf $\mathcal O$ of $\mathcal G$. In the super and graded cases we, respectively, put
\begin{equation}\label{eq structure sheaf of G}
\begin{split}
\mathcal O = \mathrm{Hom}_{\mathcal U(\mathfrak g_{\bar 0})} (\mathcal U(\mathfrak g), \mathcal F_{G}), \quad \mathcal O = \mathrm{Hom}_{\mathcal U(\mathfrak g_{0})} (\mathcal U(\mathfrak g), \mathcal F_{G}).
\end{split}
\end{equation}
Here $\mathcal F_G$ is the sheaf of (holomorphic) functions on $G$. 
Using the Hopf algebra structure on $\mathcal U(\mathfrak g)$ we can define explicitly the multiplication morphism $\mu$, the inversion morphism $\kappa$ and the identity $\varepsilon$, see for instance \cite{ViLieSupergroup}. Indeed, assume that a super  or graded of type $\Delta$ Harish-Chandra pair $(G,\mathfrak g)$ is given. Let us define the supergroup structure of the corresponding Lie supergroup or graded Lie supergroup  $\mathcal G$. Let $X\cdot Y\in
\mathcal U(\mathfrak{g}\oplus \mathfrak{g})\simeq
\mathcal U(\mathfrak{g})\otimes \mathcal U(\mathfrak{g})$, where
$X$ is from the first copy of $\mathcal U(\mathfrak{g})$ and $Y$
from the second one, $f\in \mathcal{O}$, see (\ref{eq structure sheaf of G}), and $g,\,h\in
G$. The following formulas define a multiplication morphism, an
inverse morphism and an identity morphism respectively:
\begin{equation}\label{eq umnozh in supergroup}
\begin{split}
\mu^*(f)(X\cdot Y)(g,h)&=f(\alpha(h^{-1})(X)\cdot Y)(gh);\\
\kappa^*(f)(X)(g^{-1})&=f(\alpha(g^{-1})(S(X)))(g);\\
\varepsilon^*(f)&=f(1)(e).
\end{split}
\end{equation}
Here $S$ is the antipode map in $\mathcal U(\mathfrak{g})$ considered as a Hopf algebra and $\alpha$ is as in the definition of a super Harish-Chandra pair. 

\subsection{(Skew-symmetric) double vector bundles} \label{sec_DVBs}

A {\it double vector bundle} (\DVB, in short) is a graded manifold of type $\Delta=\{0, \alpha, \beta, \alpha+\beta\}$. In this subsection we assume that the weights $\alpha$ and $\beta$  are even, however later on we shall drop this assumption.  
\commentMR{Here we can add coordinate description, but this is obvious.}
Geometrically we can see a double vector bundle as a quadruple 
$(D; A,  B; M)$  
with  the following diagram of morphisms
\begin{equation}\label{eq DVB}
    \begin{tikzcd}
D \arrow[r, ] \arrow[d]
&  A \arrow[d,  ] \\
 B \arrow[r,  ]
& M
\end{tikzcd},
\end{equation}
where all maps are bundle projections and there are imposed some natural compatibility conditions, see \cite{Pradines:1974, Mackenzie}. \commentMR{We should also write some words on history: Pradines}

{\newMR In particular, $D$ has two  vector bundle structures,  one over the manifold $A$ and the second over $B$. Moreover, $A$, $B$ are vector bundles over $M$ called \emph{the side bundles} of $D$.  The compatibility condition can be easily  expressed using Euler vector fields: $[\Euler_A, \Euler_B] = 0$ \cite{GR}. Essentially, a DVB can be consider as a manifold equipped with two Euler vector fields, $D = (D; \Euler_A, \Euler_B)$, describing the vector bundle structure on \emph{the legs} $D\to A$, $D\to B$ of $D$. This lead to many simplifications in the theory of \DVBs.}  

{\newMR The core $C$ of \DVB  is defined as intersection  of the kernels of two bundle projections $D  \to A$ and $ D  \to  B$. It is a vector bundle over $M$ with the Euler vector field defined by the restriction, $\Euler_A|_C = \Euler_B|_C$.}  

{\newMR The \emph{flip} of a DVB is obtained by interchanging the legs of $D$, ie. the flip of $D =(D; \Euler_A, \Euler_B)$ is $(D; \Euler_B, \Euler_A)$.}


Symmetric and skew-symmetric DVBs are DVBs $(D; A, B; M)$  with an involution $\sigma: D\to D$ satisfying a certain addition condition. They were introduced in
\cite{Grab} in order to recognize the image of \emph{the linearization functor} which associates with a graded manifold of type $\Delta = \{0, \alpha, 2 \alpha\}$  a DVB. Under this association graded manifolds of type $\Delta$ with even $\alpha$ (i.e., purely even graded bundles of degree 2) are in one-to-one correspondence with  symetric DVBs, while   graded manifolds of type $\Delta$ with odd $\alpha$ (i.e., $N$-manifolds of degree 2)  ---  with skew-symmetric DVBs. The results are extended to any order. For the purposes of this manuscript it is enough to recall a definition of skew-symmetric \DVBs.

\begin{definition} A skew-symmetric $\DVB$ is a pair $(D, \sigma)$ consisting of a \DVB $D$
and an involution $\sigma: D\to D$  (i.e. $\sigma \circ \sigma = \id$) such that 
\begin{enumerate}[(i)]
    \item  $\sigma$ exchanges the legs of $D$, i.e. $\sigma$ is a DVB morphism from $D$ to the flip of $D$, 
    \item the restriction of $\sigma$ to the core  is minus the identity. 
\end{enumerate}
A morphism between skew-symmetric  \DVB s is assumed to intertwines with their involutions.
\end{definition}
It follows immediately that the side bundles of a skew-symmetric DVB are isomorphic and that  any skew-symmetric DVB admits an atlas with graded coordinates $(x^a; y^i_\alpha, Y^j_\beta; z^\mu_{\alpha+\beta})$ such that
\begin{itemize}
    \item the transition functions for $y^i$ and $Y^i$ are the same and 
    $$
    z^\mu_{ij} = Q^\mu_\nu z^\nu + \frac12 Q^\mu_{ij} y^i Y^j
    $$
    with $Q^\mu_{ij} =  - Q^\mu_{ji}$, 
\item the involution $\sigma$ has a form: $\sigma^*(y^i) = Y^i$ (hence $\sigma^*(Y^i) = y^i$) and $\sigma^*(z^\mu) = -z^\mu + \sigma^\mu_{ij}(x) y^i Y^j$ with $\sigma^\mu_{ij} =   \sigma^\mu_{ji}$.   
\end{itemize}


To any double vector bundle $D$ (with the core $C$ and side bundles $A$, $B$) we can assign  a short exact sequence 
\begin{equation}\label{eq exact for DBV}
     0 \to C \to \wh{D} \to  A\otimes  B \to 0,
\end{equation}
which is obtained by dualizing the short exact sequence  
$$
0 \to \ker \pr_C \to \mcA^{\alpha+\beta}(D) \xrightarrow{\pr_C}  C^* \to 0
$$
where $\mcA^{\alpha+\beta}(D)$ is the $\mathcal F (M)$-module of homogeneaus functions on $D$ of weight $\alpha + \beta$ and $\pr_C$ denotes the restriction of a function $f$ to the core $C$, ie. $\pr_C(f) = f|_C$.  Locally, $\mcA^{\alpha+\beta}(D)$ is generated by the functions 
{\newMR $y_{\alpha}^i$, $Y^j_{\beta}$ and $z^\mu_{\alpha+\beta}$. In other words, } the dual to  $\wh{D}$ is the vector bundle whose space of sections is $\mcA^{\alpha+\beta}(D)$.

If $(D, \sigma)$ is skew-symmetric then we can decompose 
$$\mcA^{\alpha+\beta}(D) = \mcA^{\alpha+\beta}_{+}(D) \oplus  \mcA^{\alpha+\beta}_{-}(D), $$
where $\mcA^{\alpha+\beta}_{\vareps}(D)$ consists of those $f\in \mcA^{\alpha+\beta}(D)$ such that $\sigma^*(f) = \vareps f$ and $\vareps$ is plus  or minus.    

\begin{theorem} Let $(D, \sigma)$ be a skew-symmetric $\DVB$ with side bundle $E\to M$. Then  $\mcA^{\alpha+\beta}_{+}(D)$ coincides with the space of sections of $S^2 E^*$ while $\mcA^{\alpha+\beta}_{-}(D)$  gives rise to a short exact sequence 
\begin{equation}\label{e:ses-skewDVB}
    0\to \Gamma(\bigwedge^2 E^*) \to \mcA^{\alpha+\beta}_{-}(D) \to \Gamma(C^*)\to 0.
\end{equation}
The skew-symmetric $\DVB$ $(D, \sigma)$ can be reconstructed completely from the above sequence. 
\end{theorem}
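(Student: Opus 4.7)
My plan is to establish all three assertions---identification of $\mcA^{\alpha+\beta}_+(D)$ with $\Gamma(S^2 E^*)$, exactness of the sequence \eqref{e:ses-skewDVB}, and reconstructability of $(D,\sigma)$---by a direct local computation in the chart $(x^a; y^i, Y^j; z^\mu)$ displayed just before the theorem, followed by a cohomological argument for the final step. A general weight-$(\alpha+\beta)$ function has the form $f = a_{ij}(x)\,y^i Y^j + b_\mu(x)\,z^\mu$. Using $\sigma^*(y^i)=Y^i$, $\sigma^*(z^\mu)=-z^\mu+\sigma^\mu_{ij}\,y^iY^j$, and the evenness of $\alpha,\beta$, I compute
\[
\sigma^*(f) \;=\; \bigl(a_{ji}+b_\mu\sigma^\mu_{ij}\bigr)\,y^iY^j \;-\; b_\mu z^\mu,
\]
from which both $\pm$ eigenspaces are read off directly.

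The invariance condition $\sigma^*(f)=f$ forces $b_\mu=0$ together with $a_{ij}=a_{ji}$; since $y^i$ and $Y^j$ share the same cocycle on chart overlaps, the symmetric tensors $a_{ij}y^iY^j$ patch into global sections of $S^2 E^*$, proving $\mcA^{\alpha+\beta}_+(D)=\Gamma(S^2 E^*)$. Anti-invariance $\sigma^*(f)=-f$ gives $a_{ij}+a_{ji}=-b_\mu\sigma^\mu_{ij}$ with $b_\mu$ unconstrained. I define the projection $\mcA^{\alpha+\beta}_-(D)\to\Gamma(C^*)$ to be restriction to the core $\{y=Y=0\}$, which locally sends $f \mapsto b_\mu$; its kernel is the locus $b_\mu=0$ where $a_{ij}$ is forced skew-symmetric, hence $\Gamma(\bigwedge^2 E^*)$. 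Local surjectivity is witnessed by the explicit lift $b_\mu \mapsto -\tfrac{1}{2}\,b_\mu\sigma^\mu_{ij}\,y^iY^j + b_\mu z^\mu$, and exactness as a sequence of sheaves of $\mcF_M$-modules then follows stalk-wise.

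For the reconstruction, the sequence \eqref{e:ses-skewDVB} recovers $E^*$ from its kernel, $C^*$ from its cokernel, and an extension class $[\mathfrak{e}] \in \mathrm{Ext}^1_{\mcF_M}\bigl(\Gamma(C^*),\Gamma(\bigwedge^2 E^*)\bigr) \cong H^1\bigl(M,\,C\otimes \bigwedge^2 E^*\bigr)$ that is realized by the transition cocycle $[Q^\mu_{ij}]$ gluing the $z^\mu$-coordinates of $D$. The key observation is that in local coordinates arising from a local splitting of \eqref{e:ses-skewDVB} one has $\sigma^\mu_{ij}=0$: the symmetric part of the involution is pure gauge, removable by a change of $z^\mu$-trivialization. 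Accordingly, the atlas of charts furnished by local splittings, glued by the cocycle representing $[\mathfrak{e}]$, reassembles $(D,\sigma)$ up to canonical isomorphism; combined with the identification $\mcA^{\alpha+\beta}_+(D)=\Gamma(S^2 E^*)$ from the first part, this recovers the full ambient structure. The step I expect to require the most care is verifying this gauge-fixing claim---namely that two local splittings of \eqref{e:ses-skewDVB} differ by an automorphism of the skew-symmetric DVB structure that intertwines the involutions---ensuring no data escapes the sequence. As a conceptual shortcut, one may invoke the equivalence between skew-symmetric \DVBs and graded manifolds of degree two recalled just before the theorem, and identify \eqref{e:ses-skewDVB} with the standard exact sequence defining the degree-two structure sheaf, reducing the reconstruction to the well-known statement for graded manifolds.
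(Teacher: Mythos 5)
Your proof is correct and follows essentially the same route as the paper's: a direct computation of $\sigma^*$ on a general weight-$(\alpha+\beta)$ function in the adapted chart $(x^a;y^i,Y^j;z^\mu)$, reading off the $\pm$ eigenspaces, and taking restriction to the core as the projection onto $\Gamma(C^*)$. You are in fact more careful than the paper on two points its one-line proof glosses over --- the correction $z^\mu\mapsto z^\mu-\tfrac12\,\sigma^\mu_{ij}y^iY^j$ needed when $\sigma^\mu_{ij}\neq 0$ (the paper asserts $z^\mu$ itself generates part of $\mcA^{\alpha+\beta}_{-}(D)$, which is only true after this gauge fixing), and the reconstruction claim via the extension class, which the paper's proof does not address at all --- so nothing here needs fixing.
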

\begin{proof}
Let $(x^a, y^i, Y^j, z^\mu)$ be adopted coordinates for a skew-symmetric DVB $(D, \sigma)$. In these coordinates the decomposition of $\mcA^{\alpha+\beta}(D)$ is clear: $\mcA^{\alpha+\beta}_{+}(D)$ is locally generated by functions $y^i Y^j + y^j Y^i$ and $\mcA^{\alpha+\beta}_{-}(D)$ by $z^\mu$ and $y^i Y^j - y^j Y^i$. The projection on $\Gamma(C^*)$ is the same, it is given as the restriction to the core bundle of $D$. 
 \end{proof}

\subsection{Inverse limit of supermanifolds and Lie supergroups}\label{sec inverse limit} 

We shall work with some types of infinite dimensional supermanifolds and Lie supergroups. There is no need to present a general theory as all  examples we shall work with have a form of an inverse limit of supermanifolds (even with the same body $\mathcal M_0$): 
$$
    \mcM_1 \xleftarrow{\pr^2_1} \mcM_2 \xleftarrow{\pr^3_2} \mcM_3 \xleftarrow{} \ldots
$$
where $\mcM_i = (\mathcal M_0, \mathcal{O}_{\mcM_i})$ and $\pr^{k+1}_k:\mcM_{k+1}\rightarrow\mcM_k$ is the projection of the graded manifold $\mcM_{k+1}$ of degree $k+1$ to the graded manifold $\mcM_{k}$ of degree $k$, see Remark \ref{rem projection of graded}. We define
$$
\mcM_{\infty} = (\mathcal M_0, \mathcal O_{\mcM_{\infty}}), \quad \text{ where } \mcO_{\mcM_{\infty}}  = \bigcup_{k=1}^\infty \mcO_{\mcM_{k}}, 
$$
i.e. $f\in \mcO_{\mcM_{\infty}}$ if and only if there exist $k$ such that $f\in \mcO_{\mcM_k}$. A morphism  $f: \mcM_{\infty} \to \mathcal N_{\infty}$ is a family of morphisms $f = (f_k: \mcM_k \to \mathcal N_{a_k})$ 
where $(a_k)$ is a non-decreasing sequence of positive integers and the family $(f_k)$ is compatible with the projections: 
$$
\xymatrix{
\mcM_{k+1} \ar[rr]^{f_{k+1}} \ar[d]^{\pr^{k+1}_k} && \mathcal N_{a_{k+1}} \ar[d]^{\pr^{a_{k+1}}_{a_k}} \\
\mcM_k \ar[rr]^{f_{k}} && \mathcal N_{a_k}
}
$$
where $\pr^{a_{k+1}}_{a_k}$ is the composition of projections $\pr^{i+1}_i$ for $i=a_{k+1}-1$ to $a_k$.

The inverse limit of Lie supergroups has a Lie supergroup structure. We additionally have to assume that the projections $\pr^{i+1}_i$ are Lie supergroup homomorphisms.  Also the inverse limit of Lie (super)algebras has a Lie (super)algebra structure thus a Lie functor makes sense for inverse limit of Lie supergroups. For example a $\mathbb{Z}$-graded Lie superalgebra $\g = \bigoplus_{k=0}^{\infty} \g_k$ is an inverse limit of Lie algebras $\g/I_k$ where $I_k = \g_{k+1}\oplus \g_{k+2}\oplus \ldots$. There is one-to-one correspondence of the inverse limit of graded Lie supergroups and the inverse limit of their graded Harish-Chandra pairs.

\section{The Donagi--Witten construction, supermanifolds, double vector bundles and graded manifolds of degree $2$}\label{sec Witten}

In \cite[Section 2.1]{Witten Atiyah classes} Donagi and Witten gave a description of the first obstruction class $\omega=\omega_2$ via differential operators. In this section we remind the definition of $\omega_2$ and the Donagi--Witten construction. Further we give an interpretation of the Donagi--Witten construction using the language of  double vector bundles and graded manifolds of degree $2$. At the end using differential forms instead of differential operators we simplify this construction, which allows us to find its higher analogue. 

\subsection{First obstruction class $\omega_2$} 

 Let us describe the first obstruction class to splitting a supermanifold using results \cite{Ber,Green,Oni,Roth}. We follow the exposition of \cite{Oni}. First of all consider a split supermanifold  $\mcM=(\mcM_0,\mcO)$, where $\mcO=\bigwedge \mathcal E^*$ and $\mathcal E$ is the sheaf of sections of $\mathbb E[\bar 1]$. Denote by $\mathcal{D}er\mathcal O$ the sheaf of vector fields on $\mcM$ and by $\mathcal{D}er\mathcal F$ the sheaf of vector fields on the underlying space $\mcM_0$.  The sheaf $\mathcal{D}er\mathcal O=\bigoplus_{p\geq -1} \mathcal{D}er_p\mathcal O$ is naturally $\Z$-graded since $\bigwedge \mathcal E^*$ is $\Z$-graded.  We have the following exact sequence
\begin{equation}\label{eq exact seq}
    0\to \bigwedge^3\mathcal E^*\otimes\mathcal E \to \mathcal{D}er_2\mathcal O \to \bigwedge^2\mathcal E^*\otimes\mathcal{D}er\mathcal F \to 0,
\end{equation}
see \cite[Formula (5)]{Oni}.

According Green \cite{Green} we can describe all non-split supermanifolds $\mathcal N$ such that $\gr (\mathcal N)\simeq \mcM$ using the sheaf of automorphisms $\mathcal{A}ut \mathcal O$  of  $\mathcal O$. More precisely  consider the following subsheaf of $\mathcal{A}ut \mathcal O$:
$$
\mathcal{A}ut_{(2)} \mathcal O = \{ a\in \mathcal{A}ut \mathcal O\,\,|\,\, a(u)-u\in \mathcal J^2\,\, \text{for any}\,\, u\in \mathcal O \},
$$
see also \cite[Formula (17)]{Oni}. Recall that $\mathcal J$ is the sheaf of ideals generated by odd elements in $\mathcal O$.  Denote by $\mathrm{Aut} (\mathbb E^*)$ the group of automorthisms of $\mathbb E^*$.  There is  a natural action of $\mathrm{Aut} (\mathbb E^*)$ on the sheaf $\mathcal{A}ut_{(2)} \mathcal O$, see \cite[Section 1.4]{Oni}. This action induces an action of $\mathrm{Aut} (\mathbb E^*)$ on the set $H^1(\mcM_0, \mathcal{A}ut_{(2)} \mathcal O)$. 
By Green \cite{Green} points of the set of orbits  $H^1(\mcM_0, \mathcal{A}ut_{(2)} \mathcal O)/ \mathrm{Aut} (\mathbb E^*)$ are in one-to-one correspondence with  isomorphism classes of supermanifolds $\mathcal N$ such that $\gr (\mathcal N)\simeq \mcM$. More precisely to any supermanifold $\mathcal N$ such that $\gr (\mathcal N)\simeq \mcM$ we can assign  a class $\gamma\in H^1(\mcM_0, \mathcal{A}ut_{(2)} \mathcal O)$. If $\gamma_i$ is the class corresponding to a supermanifold $\mathcal N_i$ such that $\gr (\mathcal N_i)\simeq \mcM$, where $i=1,2$. Then $\mathcal N_1\simeq \mathcal N_2 $ if and only if $\gamma_1$ and $\gamma_2$ are in the same orbit of $\mathrm{Aut} (\mathbb E^*)$.

In \cite{Roth} the following map of sheaves was defined
\begin{equation}\label{eq Roth map}
    \mathcal{A}ut_{(2)} \mathcal O \to \mathcal{D}er_2\mathcal O,
\end{equation}
see also \cite[Formula (19)]{Oni}. Combining the map (\ref{eq Roth map}) and the map $\mathcal{D}er_2\mathcal O \to \bigwedge^3\mathcal E^*\otimes\mathcal{D}er\mathcal F$ from (\ref{eq exact seq}) we get the following map of cohomology sets
$$
H^1(\mcM_0, \mathcal{A}ut_{(2)} \mathcal O) \to H^1(\mcM_0, \bigwedge^2\mathcal E^*\otimes\mathcal{D}er\mathcal F)
$$
and the corresponding map of $\mathrm{Aut} (\mathbb E^*)$-orbits 
$$
H^1(\mcM_0, \mathcal{A}ut_{(2)} \mathcal O)/\mathrm{Aut} (\mathbb E^*) \to H^1(\mcM_0, \bigwedge^2\mathcal E^*\otimes\mathcal{D}er\mathcal F)/\mathrm{Aut} (\mathbb E^*).
$$

 If $\gamma\in H^1(\mcM_0, \mathcal{A}ut_{(2)} \mathcal O)$ corresponds to a non-split supermanifold $\mathcal N$, the image of $\gamma$ in $H^1(\mcM_0, \bigwedge^2\mathcal E^*\otimes\mathcal{D}er\mathcal F)$ is called {\it the first obstruction class} to  splitting of $\mathcal N$ and following \cite[Section 2.1]{Witten Atiyah classes} we denote this class by $\omega_2$. 

Consider the case when $\mcM$ has odd dimension $2$ in details. Since $\bigwedge^3 \mathcal E^*=\{0\}$, from (\ref{eq exact seq}) it follows that 
 $$
 \mathcal{D}er_2\mathcal O\simeq \bigwedge^2\mathcal E^*\otimes\mathcal{D}er\mathcal F.  
 $$
In this case the map (\ref{eq Roth map}) is an isomorphism. Therefore we have the following set bijection
$$
H^1(\mcM_0, \mathcal{A}ut_{(2)} \mathcal O) \simeq H^1(\mcM_0, \bigwedge^2\mathcal E^*\otimes\mathcal{D}er\mathcal F)
$$
and the corresponding bijection of the sets of orbits. 
Now Green's result \cite{Green} implies that $\omega_2$ is the only obstruction for a supermanifold to be split in this case. In other words a supermanifold $\mathcal N$ of odd dimension $2$ such that $\gr(\mathcal N)\simeq \mcM$ is split if and only if $\omega_2=0$. Note that the notion of a split and a projectable, see \cite{Witten not projected} for details, supermanifold coincide in this case.

\subsection{The Donagi and Witten construction}
\commentMR{It should be clear what we call the Donagi-Witten construction but this subsection does not explain it. I only guess: the module $\mathcal D^2_{-} |_{\mcM_0}$ and that the obstruction class $\omega$ defined above is the Atiyah class of the extension
$$
0\to \oT M \to D_\omega \to \bigwedge^2 E \to  0  
$$
where sections of $D_\omega$ are identified with $\mathcal D^2_{-} |_{\mcM_0}$. 
Then our observation is that $D_\omega$ gives rise to a skew-symmetric double vector bundle. I am right? 
}
In \cite[Section 2.1]{Witten Atiyah classes} the first obstruction class $\omega_2$   to splitting a supermanifold $\mathcal M=(\mcM_0, \mathcal O)$ was interpreted in terms of a certain sheaf  of differential operators on $\mcM$. Namely, 
the obstruction class $\omega$ defined above is the Atiyah class of the extension
$$
0\to \T M \to D_\omega \to \bigwedge^2 E \to  0  
$$
where sections of $D_\omega$ are identified with some factor of $\mathcal D^2_{-} |_{\mcM_0}$ where the meaning of the latter is explained below.

Let us remind this construction using charts and local coordinates.  Consider two charts $\mathcal U_1$ and $\mathcal U_2$  on $\mcM$ with non-empty intersection and with local coordinates $(x_i,\xi_a)$ and  $(y_j,\eta_b)$, where $i,j=1,\ldots, n$ and $a,b=1,\ldots, m$. Let in $\mathcal U_1\cap \mathcal U_2$ we have the following transition functions \begin{equation}\label{eq starting smf m=2}
	\begin{split}
		&y_j= F_{j} + \frac12 G_j^{a_1a_2}\xi_{a_1}\xi_{a_2}+\cdots\,\,\, j=1,\ldots, n;\\ 
		&\eta_b = H_{b}^a\xi_a+\cdots, \,\,\, b=1,\ldots, m,
	\end{split}
\end{equation}
\commentMR{It is better to include $\frac{1}{2}$.}
where $F_j,G^{a_1a_2}_j,H_{i}^b$ are (holomorphic) functions depending only on even coordinates $(x_i)$.  

\commentMR{We consider this not only for $z= x_i$, $\xi_a$. Let us work in this generality. It will be shorter.} 
{\newMR If $D: \mcO \to \mcO$ is a differential operator let $\rd{D}: \mcO \to \mcF$ denotes the composition of $D$ with the projection $\mcO \to \mcO/\mcJ  =\mcF$.}

Following Donagi and Witten, see  \cite[Section 2.1]{Witten Atiyah classes}, we define the sheaf $\mathcal D^2_{-} |_{\mcM_0}$ on the underlying manifold $\mcM_0$ as a sheaf locally generated over $\mathcal F$  by 
$$
 \langle 1, \rd{\frac{\partial}{\partial x_i}}, \rd{\frac{\partial}{\partial \xi_j}}, \rd{\frac{\partial^2}{\partial \xi_a \partial \xi_b}} \rangle.
$$ 
  In \cite[Theorem 2.5]{Witten Atiyah classes} it was shown that this definition does not depend on local coordinates. Note that the operators 
  $\frac{\pa}{\pa \xi_a}$ anticommute, ie. $\frac{\partial^2}{\partial \xi_a\partial \xi_b} = - \frac{\partial^2}{\partial \xi_b\partial \xi_a}$.
  We can get transition function for generators of $\mathcal D^2_{-} |_{\mcM_0}$ writing down the transition function for $\frac{\partial}{\partial x_i}$, $\frac{\partial}{\partial \xi_j}$, $\frac{\partial^2}{\partial \xi_a\partial \xi_b}$ and then factorizing them by $\mathcal J$. 
Indeed,
\begin{align*}
    \frac{\partial}{\partial x_i} = \frac{\partial y_j}{\partial x_i} \frac{\partial}{\partial y_j} + \frac{\partial \eta_{\alpha}}{\partial x_i} \frac{\partial}{\partial \eta_{\alpha}};
    \quad 
    \frac{\partial}{\partial \xi_k} = \frac{\partial y_j}{\partial \xi_k} \frac{\partial}{\partial y_j} + \frac{\partial \eta_{\alpha}}{\partial \xi_k} \frac{\partial}{\partial \eta_{\alpha}}.
    \end{align*}
Therefore modulo $\mathcal J$ we have:
\begin{align*}
    \rd{\frac{\partial^2}{\partial \xi_{a_1} \partial \xi_{a_2}}}  =  
    \Big(\frac{\partial \eta_{\alpha}}{\partial \xi_{a_1}}  \frac{\partial^2 (y_j)}{\partial \xi_{a_1}\partial \xi_{a_2}}\Big)_{\red} \rd{\frac{\partial}{\partial y_j}} - \Big(\frac{\partial \eta_{\alpha}}{\partial \xi_{a_1}} \frac{\partial \eta_{\beta}}{\partial \xi_{a_2}}\Big)_{\red} \rd{\frac{\partial^2}{\partial \eta_{\alpha}\partial \eta_{\beta}}}. 
\end{align*}
Compare with \cite[Formula  2.13]{Witten Atiyah classes}.
Using (\ref{eq starting smf m=2}) we can write the transition functions for $\mathcal D^2_{-} |_{\mcM_0}$ explicitly. 
\begin{equation}\label{eq Don Witten}
\begin{split}
    &x_i= F^{-1}_{i} (y_j);\\ 
    &\rd{\frac{\partial}{\partial x_i}} = \frac{\partial F_j}{\partial x_i} \rd{\frac{\partial}{\partial y_j}};\quad \rd{\frac{\partial}{\partial \xi_a}}  = H_{a}^{b}  \rd{\frac{\partial}{\partial \eta_{b}}};\\
   & \rd{\frac{\partial^2}{\partial \xi_{b_1} \partial \xi_{b_2}}}  = -G_j^{b_1b_2}  \rd{\frac{\partial}{\partial y_j}}  - H_{\alpha}^{b_1} H_{\beta}^{b_2}  \rd{\frac{\partial^2}{\partial \eta_{\alpha}\partial \eta_{\beta}}} .
\end{split}
    \end{equation}
\commentMR{I personally prefer to write $\pa_x$ instead of $\frac{\pa}{\pa  x}$. Then the formulas would be shorter. Similarly, the indexes could be better organized: use $i, j, k, .. $ for $x$ and $a, b, ...$ for $\xi$. The minuses would be pluses if we follow another convention at the begging. }

Note that in (\ref{eq Don Witten}) it is more correct to use the index $red$ for even coordinates, for example $(x_i)_{\red}$. However we omit $\red$ for simplicity of notations.  We conclude this subsection with the following important remark. 
\commentMR{D-W factor this module by $\langle 1, \frac{\pa}{\pa \xi}$ so only $\frac{\pa}{\pa x_i}$ and 
$\frac{\partial^2}{\partial \eta_{\alpha}\partial \eta_{\beta}}$ survives. }
\commentMR{Let us stay with as it is.}

\begin{remark}
    Comparing Formulas (\ref{eq starting smf m=2}) and (\ref{eq Don Witten}), we see that Formulas (\ref{eq Don Witten}) contain the whole information about Formulas (\ref{eq starting smf m=2}) modulo  $\mathcal J^3$.
    In other words using Formulas (\ref{eq Don Witten}) we can reconstruct Formulas (\ref{eq starting smf m=2}) modulo $\mathcal J^3$.

    One of purposes of this paper is to develop this observation. 
    This leads to an idea to use the theory of $n$-fold vector bundles and the theory of graded manifolds of degree $n$ to recover a supermanifolds of odd dimension greater than $2$. 
\end{remark}
\commentMR{The rest moved to the next subsection (Lemma)}

\subsection{A geometric interpretation of the Donagi and Witten construction}

In this subsection we give a description of the geometric object with transition functions (\ref{eq Don Witten}). We 
use the theory of double vector bundles and graded manifolds of degree $2$. 
\commentMR{I do not see what was going on in this subsection. 1. Refer to D-W that Atiyah class of a s.e.s. associated with $\mathcal{D}^2_{-}|_{\mcM_0}$ is the obstruction class $\omega_2(\mcM)$. 
2. We want to show that the s.e.s. present in D-W paper (with  $\mathcal{D}^2_{-}|_{\mcM_0}$ in the middle) is just the s.e.s. associated with some a DVB? But this should be replaced with a s.e.s. associated with  a skew-symmetric DVB? Then it makes sense.}


\begin{lemma}  Let $\mathbb{D}$ be a vector bundle over $M$ characterized by the space of sections, $\Sec(\mathbb{D}) =  \mathcal D^2_{-} |_{\mcM_0}$. Then the dual bundle $\mathbb{D}^*$ has transition functions of a skew-symmetric \DVB.  More precisely, there exist a skew-symmetric \DVB  $(D, \sigma)$ such that   the space $\mcA^{\alpha+\beta}_{-}(D)$ coincides with the space of sections of the vector bundle $\mathbb{D}^*$.
\end{lemma}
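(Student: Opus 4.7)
The plan is to read (\ref{eq Don Witten}) as the cocycle of a skew-symmetric \DVB and then match its associated short exact sequence with the one read off from $\Sec(\mathbb D^*)$.

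First I would introduce fibre coordinates on a candidate total space $D$ that mirror the Donagi--Witten generators: odd fibre coordinates $y^a$ and $Y^a$ of weights $\alpha$ and $\beta$ on two copies of $E$, dual to $\rd{\partial/\partial \xi_a}$, and an even fibre coordinate $z^i$ of weight $\alpha+\beta$ dual to $\rd{\partial/\partial x_i}$. The transition $\rd{\partial/\partial\xi_a} = H^b_a\,\rd{\partial/\partial\eta_b}$ dualizes to identical linear changes in $y$ and in $Y$, so both side bundles of $D$ are modelled on $E\to M$. Dualizing the third line of (\ref{eq Don Witten}) yields a transition rule for $z^i$ of the shape
\[
z^i \longmapsto M^i_j(x)\, z^j + \tfrac12\, \tilde G^i_{ab}(x)\, y^a Y^b,
\]
where the antisymmetry $\tilde G^i_{ab} = -\tilde G^i_{ba}$ is forced by $\partial^2/\partial\xi_a\partial\xi_b = -\partial^2/\partial\xi_b\partial\xi_a$. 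This is precisely the defining atlas of a skew-symmetric \DVB from Subsection \ref{sec_DVBs}, with side bundles identified with $E$ and core with $\oT M$.

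Next I would verify that these local formulas glue to a genuine \DVB and carry a natural involution. The cocycle condition over triple overlaps is automatic because $\mathcal{D}^2_-|_{\mcM_0}$ is a well-defined geometric object by \cite[Theorem 2.5]{Witten Atiyah classes}, so its dual cocycle is consistent. The involution is then forced by $\sigma^*(y^a) = Y^a$, $\sigma^*(Y^a) = y^a$, $\sigma^*(z^i) = -z^i$; symmetry of the transition rule under swapping the two odd-coordinate copies (since $H$ acts identically on $y$ and $Y$) together with $\tilde G^i_{ab} = -\tilde G^i_{ba}$ guarantees that $\sigma$ is compatible with chart changes modulo the admissible shift $\sigma^i_{ab}(x)\, y^a Y^b$, exchanges the two legs of $D$, and restricts to $-\id$ on the core.

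Finally, to identify $\mcA^{\alpha+\beta}_-(D)$ with $\Sec(\mathbb{D}^*)$ I apply the preceding theorem to obtain
\[
0 \to \Gamma(\bigwedge{}^{\!2} E^*) \to \mcA^{\alpha+\beta}_-(D) \to \Gamma(C^*) \to 0,
\]
and compare it with the short exact sequence produced by dualizing the filtration on $\mathbb{D}$,
\[
0 \to \Gamma(\bigwedge{}^{\!2} E^*) \to \Sec(\mathbb{D}^*) \to \Gamma((\oT M)^*) \to 0.
\]
Both extensions are read off from the same cocycle — the quadratic piece of the third line of (\ref{eq Don Witten}) — so they coincide and the middle terms are canonically identified. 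The main obstacle, and really the only content, is the globalisation of this local model: the cocycle data of (\ref{eq Don Witten}) depend on the atlas of $\mcM$, and promoting them to an intrinsic skew-symmetric \DVB with a coordinate-free involution rests entirely on the antisymmetry $\tilde G^i_{ab} = -\tilde G^i_{ba}$ inherited from the anticommutation of odd partial derivatives.
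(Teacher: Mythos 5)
Your proposal is correct and follows exactly the route the paper takes: its entire proof is the single sentence ``We simply transpose and reverse the formulas (\ref{eq Don Witten})'', and your argument is precisely that transposition/inversion carried out explicitly, with the side bundles identified with $E$, the core with $\T\mcM_0$, and the skew-symmetry of the cocycle traced to the anticommutation of the odd partial derivatives. The only additional content you supply is the (routine but worthwhile) verification of the cocycle condition, the involution, and the matching of the two short exact sequences, which the paper leaves implicit.
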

\begin{proof}
We simply transpose and reverse the formulas \eqref{eq Don Witten}.
\end{proof}
The skew-symmetric \DVB arising from the above lemma will be denoted by $\Vb_2(\mcM)$.
 \commentMR{Some corrections are necessary below in this subsection.}
 
  The Atiyah class of the  exact sequence \eqref{eq exact for DBV}, that is the obstruction class of splitting of this sequence, is an element in $H^1(M;  A^*\otimes  B^* \otimes \ C)$.  It will be also called the Atiyah  class associated with the \DVB $D$ and denoted by $\At(D)$. Note that the Atiyah  class associated with 
  any dual (vertical or horizontal) of $D$ coincides with $\At(D)$.  Conversely 
 if we have the sequence (\ref{eq exact for DBV}) we can reconstruct the double vector bundle $D$. 

Now let $\mcM$ be a supermanifold with transition function (\ref{eq starting smf m=2}) and let $E = \Vb_1(\mcM)$. 
The Atiyah sequence associated with $D = \Vb_2(\mcM)$ (which  is a double vector bundle with the same side bundles $E$ and the core $\T \mcM_0$) is 
$$
 0 \to \T  \mcM_0 \to \widehat{D} \to E \otimes E \to 0.
$$
The obstruction for splitting of this exact sequence is 
$$
\At(\Vb_2(\mcM)) \in H^1(\mcM_0, \T (\mathcal M_0) \otimes  E^*\otimes  E^*).
$$ 
{\newMR Due to the decomposition $E\otimes E  =\Sym^2 E \oplus \bigwedge^2 E$,
the Atiyah class $\At(D)$, where $D =\Vb_2(\mcM)$,  decomposes to $\At_{+}(D) + \At_{-}(D)$, where $\At_{+}(D) \in H^1(\mcM_0, \T (\mathcal M_0) \otimes \Sym^2 E^*)$ and  $\At_{-}(D) \in H^1(M; \T M \otimes \bigwedge^2  E^*)$. We are only interested with $\At_{-}(D)$  because our DVB $\Vb_2(\mcM)$ is a skew-symmetric \DVB, hence $\At_+(D) = 0$.}
\commentMR{The situation is a bit more complicated: The Atiyah class of s.e.s. associated with $D = \Vb_2(\mcM)$ decomposes to $\At_{+}(D) + \At_{-}(D)$. However,  we are only interested with  $\At_{-}(D) \in H^1(M; \T M \otimes \bigwedge^2 E^2)$ because our DVB $\Vb_2$ is skew-symmetric. Write this is more details. }

{\newMR The class $\At_{-}(\mcM)$ is here 
the same as  the Atiyah class of the sequence \eqref{e:ses-skewDVB} associated with the skew-symmetric \DVB $(D, \sigma)$.  It also coincides with  the first obstruction class to splitting  of the supermanifold $\mcM$ as in \cite[Section 2]{Witten Atiyah classes}. } \commentMR{The last sequence I did not check.}


Later we will give a geometric interpretation of this fact.

\subsection{A modification of the Donagi--Witten construction}
In this subsection we suggest a different way how to obtain a double vector bundle with obstruction the class $\omega_2$. First of all to write Formulas (\ref{eq Don Witten}) we need the transition functions  (\ref{eq starting smf m=2}) and their inverse. To avoid this inconvenience we can use differential forms instead of differential operators. 

Let $\mcM$ be a supermanifold as above. Consider as above two charts $\mathcal U_1$ and $\mathcal U_2$ with non-empty intersection on $\mcM$ with local coordinates $(x_i,\xi_a)$ and  $(y_j,\eta_b)$ and  transition functions (\ref{eq starting smf m=2}). Further consider the antitangent {\newMR bundles} $\oT(\mathcal M)$ of $\mcM$ and two charts $\oT(\mathcal U_1)$ and $\oT(\mathcal U_2)$ with standard coordinates $(x_i,\xi_a, \dd x_i,\dd \xi_a)$ and  $(y_j,\eta_b,\dd y_j,\dd\eta_b)$, respectively. Thus $x_i$, $\dd\xi_a$ are even local coordinates in $\oT(\mathcal U_1)$, while $\xi_a$,$\dd x_i)$  are odd ones. 
In $\oT(\mathcal U_1)\cap\oT(\mathcal U_2)$ we get the following transition functions.
\begin{align*}
	&	y_j= F_{j} +G_j^{a_1a_2}\xi_{a_1}\xi_{a_2}+\cdots,\quad 
	\eta_a = H_{a}^b\xi_b+\cdots;\\
	& \dd y_j = \sum_{b=1}^n(F_{j})_{x_b}\dd x_b + \sum_{b=1}^n(G_j^{a_1a_2})_{x_b}\dd x_b\xi_1\xi_2 + G_j^{a_1a_2}(\dd(\xi_{a_1}) \xi_{a_2} - \xi_{a_1}\dd(\xi_{a_2}))+\cdots;\\
	&\dd \eta_a = \sum_{b=1}^n (H_{a}^i)_{x_b}\dd x_b \xi_j +  H_{a}^b \dd \xi_b+\cdots.\\
\end{align*}
Here we denoted by $(F_{j})_{x_b}$ the derivation of $F_{j}$ by $x_b$.  Now we apply the functor split $\gr$ to $\oT(\mathcal M)$. In local coordinates this means that we factorize our transition functions by all terms that contain more than one odd variable. 
 Then $\gr \oT(\mathcal M) $ has the following transition functions
\begin{equation}\label{eq transition function for grT(M)}
\begin{split}
    &	y_j= F_{j}(x);\\ 
	&\eta_a =  H_{a}^b\xi_b;\\
	& \dd y_j = \sum_{b=1}^n(F_{j})_{x_b}\dd x_b  + G_j^{a_1a_2}(\dd(\xi_{a_1}) \xi_{a_2} - \xi_{a_1}\dd(\xi_{a_2}));\\
	&\dd \eta_a =   H_{a}^b \dd \xi_b.
\end{split}
\end{equation}

If we compare Formulas (\ref{eq transition function for grT(M)}) with \cite[Section 2.2, Formulas (9)-(12)]{Voronov}, we see that a manifold with such transition functions is a double vector bundle, which we denote by $\mathbb D$. We can see  $\gr \oT(\mathcal M)$ as a graded manifold of type $\Delta= \{0, \alpha, \beta, \alpha+\beta\}$, where $\alpha$ is odd {\newMR (the weight of $\xi_a$)} and  $\beta$ is even {\newMR (the weight of $\dd \xi$)}. 
Formulas (\ref{eq transition function for grT(M)}) shows that all weights are well-defined.
The double vector bundle
\begin{equation}\label{eq our DVB}
    \begin{tikzcd}
\mathbb D' \arrow[r, ] \arrow[d]
&  E[\bar 1] \arrow[d,  ] \\
 E \arrow[r,  ]
& \mcM_0
\end{tikzcd}
\end{equation}
is only slightly different from the double vector bundle $\Vb_2(\mcM)$. 
In our case one side bundle is pure even. 

\begin{remark}
    By definition the composition of functors $\gr\circ \oT$ is a functor from the category of supermanifolds to the category of (split) supermanifolds. However above implies that we can see the image of $\gr\circ \oT$ as 
    the category of double vector bundles {\newMR (with some additional structure)}. For simplicity we will use the same notation $\gr\circ \oT$ meaning a functor from the category of supermanifolds to the category of double vector bundles. Similarly in next sections we will consider the functor $\gr\circ \oT^{(n)}$ as a functor from the category of supermanifolds to the category of $n$-fold vector bundles.
\end{remark}

Now we can go further and give an interpretation of $\omega_2$ as the obstruction class of splitting of a  graded manifold of degree $2$.  First of all let us change that parity of the side bundle $\mathbb E$. (In other words we apply the functor parity change, see \cite{Voronov}.) To do this we need to rewrite (\ref{eq transition function for grT(M)}) in the following form
\begin{equation}\label{eq transition function for pi grT(M)}
\begin{split}
    &	y_j= F_{j};\\ 
	&\eta_a =  H_{a}^b\xi_b;\\
	& \dd y_j = \sum_{b=1}^n(F_{j})_{x_b}\dd x_b  + G_j^{a_1a_2}(\xi_{a_2}\dd(\xi_{a_1}) - \xi_{a_1}\dd(\xi_{a_2}));\\
	&\dd \eta_a =   H_{a}^b \dd \xi_b.
\end{split}
\end{equation}
and to change the parity of the weight $\beta$. 

Now we use a result obtained by \cite{JL,Fernando} for double vector bundles and by \cite{Grab,Vish} for $n$-fold vector bundles. Later we will call this step "to apply the functor inverse". In more details in \cite{JL,Fernando} a functor was constructed from the category of graded manifolds of degree $2$ to the category of double vector bundles with some additional structures. In \cite{Grab,Vish} an analogue of this result was obtained for graded manifolds of degree $n$ and  for graded manifolds of type $\Delta$. (Note that in all these papers \cite{JL,Fernando,Grab,Vish} the categories of double vector bundles with additional structures are different. In this paper we follow approaches of \cite{Grab,Vish}.) In \cite{JL,Fernando} it was shown that this functor is an equivalence of the category of double vector bundles with some additional structures and the category of graded manifolds of degree $2$. In  \cite{Grab,Vish} it was shown that this functor is an equivalence of the category of $n$-fold vector bundles with some additional structures and the category of graded manifolds of degree $n$ or of type $\Delta$. The inversion of this functor, that is the functor from  the category of $n$-fold vector bundles with some additional structures to the category of graded manifolds of degree $n$, we call the {\it functor inverse}. We will denote the functor inverse by $\iota$.

In terms of local coordinates "to apply the functor inverse" means that we identify $\xi_a$ with $\dd\xi_a$ in (\ref{eq transition function for pi grT(M)}). We get
\begin{equation}\label{eq transition function for iota pi grT(M)}
\begin{split}
    &	y_j= F_{j};\\ 
	&\theta_a =  H_{a}^i\zeta_i;\\
	& t_j = \sum_{b=1}^n(F_{j})_{x_b}z_b  + 2G_j^{a_1a_2}\zeta_{a_1}\zeta_{a_2}.
\end{split}
\end{equation}
We obtained transition functions of a graded manifold $\mathcal N$ of degree $2$, which we will denote also by $\F_2(\mcM)$.  We assign the following weights to our local coordinates: $x_i$ (weight $0$); $\zeta_j$ (weight $\alpha$); $z_s$ (weight $2\alpha$). In other words, the transition functions (\ref{eq transition function for iota pi grT(M)})  defines a graded manifold of type $\Delta=\{ 0,\alpha,2\alpha\}$, where $\alpha$ is odd. Note that we can remove the coefficient $2$ in (\ref{eq transition function for iota pi grT(M)}). Indeed, it is enough to replace $\zeta_i$ by $\frac{1}{\sqrt{2}}\zeta'_i$ in any chart.

\begin{remark}
    Let us give another explanation of the procedure "to apply the functor inverse". Consider two graded domain $\mathcal V_1$ and $\mathcal V_2$ with graded coordinates $(x_i,\zeta_j, z_i)$ and $(y_i,\theta_j, t_i)$, respectively, and with weights as above. Define transition functions $\mathcal V_1 \to \mathcal V_2$ by (\ref{eq transition function for iota pi grT(M)}). (Let us omit the coefficient $2$.) Now we apply the tangent functor $\oT$ to $\mathcal V_1$ and $\mathcal V_2$ and to the morphism (\ref{eq transition function for iota pi grT(M)}) and factorize the result by the sheaf of ideals locally generated by $\dd x_i$ (or by $\dd y_i$). We get
    \begin{align*}
    &	y_j= F_{j};\\ 
	&\theta_a =  H_{a}^i\zeta_i;\quad \dd \theta_a =   H_{a}^i \dd \zeta_i;\\
	& \dd t_j = \sum_{b=1}^n(F_{j})_{x_b}\dd z_b  + G_j^{a_1a_2}(\zeta_{a_2}\dd(\zeta_{a_1}) - \zeta_{a_1}\dd(\zeta_{a_2})).
    \end{align*}
    We obtain Formulas (\ref{eq transition function for pi grT(M)}) up to appropriate change of variables.  The inversion of this procedure is called  "to apply the functor inverse".  \end{remark}

The structure sheaf $\mathcal O_{\mathcal N}=\bigoplus_{q\geq 0}(\mathcal O_{\mathcal N})_q$ of $\mathcal N$ is $\Z$-graded. Clearly $(\mathcal O_{\mathcal N})_1\cdot (\mathcal O_{\mathcal N})_1\subset (\mathcal O_{\mathcal N})_2$. Therefore we can assign to $\mathcal N$ the following  exact sequence
\begin{align*}
    0\to (\mathcal O_{\mathcal N})_1\cdot (\mathcal O_{\mathcal N})_1\to (\mathcal O_{\mathcal N})_2 \to (\mathcal O_{\mathcal N})_2/ (\mathcal O_{\mathcal N})_1\cdot (\mathcal O_{\mathcal N})_1\to 0. 
\end{align*}
The Atiyah class of this sequence is represented by a cocycle $(G_J^{a_1a_2})$ and coincides with the obstruction class of the double vector bundle (\ref{eq transition function for pi grT(M)}). It is called the {\it obstruction class of the graded manifold $\F_2(\mcM)$ of degree $2$}.
\commentMR{For existence of the functor inverse it is better to refer to our previous papers.}

Let us summarize our results in the following theorem.

\begin{theorem}[Main theorem about the first obstruction class]
    The first obstruction class $w_2$ for a supermanifold $\mcM$ coincides with the obstruction class $\At_{-}(\Vb_2(\mcM))$ of the skew-symetric double vector bundle $\Vb_2(\mcM)$ 
    and with the obstruction class of the graded manifold $\F_2(\mcM)$ of degree $2$.
\end{theorem}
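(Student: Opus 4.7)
The plan is to exhibit a single \v{C}ech 1-cocycle on $\mcM_0$ which simultaneously represents all three classes, and then verify the three identifications. Given an atlas $\{\mcU_{\alpha}\}$ on $\mcM$ with coordinates $(x_i,\xi_a)$ and transition functions as in \eqref{eq starting smf m=2}, the collection $\{G_j^{a_1 a_2}\}$ (antisymmetric in $a_1,a_2$) defines a \v{C}ech 1-cochain on $\mcM_0$ with values in $\T \mcM_0 \otimes \bigwedge^2 \mathcal E^*$. First I would verify the cocycle condition by expanding the compatibility relation on triple intersections modulo $\mcJ^3$, and check that the cohomology class $[G]$ is independent of the atlas and of the chosen isomorphism $\gr(\mcM)\simeq \mcM$ modulo $\mcJ^2$.

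For the identification with $\omega_2(\mcM)$, I would use the construction of Green and Onishchik: $\mcM$ determines a cocycle in $H^1(\mcM_0, \mathcal{A}ut_{(2)}\mcO)$ represented, modulo $\mcJ^3$, by the transition automorphism of $\gr\mcO$ on $\mcU_\alpha\cap \mcU_\beta$ that sends $x_j\mapsto x_j + \tfrac12 G_j^{a_1 a_2}\xi_{a_1}\xi_{a_2}$ and fixes $\xi_a$. Applying the Rothstein map \eqref{eq Roth map} and composing with the projection $\mathcal{D}er_2\mcO \to \bigwedge^2\mathcal E^*\otimes \mathcal{D}er\mcF$ coming from \eqref{eq exact seq}, I would track that this cocycle is sent precisely to $[G]$; by the Donagi--Witten definition this image is, by definition, $\omega_2(\mcM)$.

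For the identification with $\At_{-}(\Vb_2(\mcM))$, by construction $\Vb_2(\mcM)$ is the skew-symmetric \DVB whose $(-)$-eigenspace of weight $\alpha+\beta$ recovers $\mathcal D^2_{-}|_{\mcM_0}$ with transition functions dual to \eqref{eq Don Witten}. The Atiyah class of the sequence \eqref{e:ses-skewDVB} is read off from the block of that transition matrix coupling the core $\T \mcM_0$ with the quotient $\bigwedge^2\mathcal E^*$; from \eqref{eq Don Witten} this block is exactly $(G_j^{a_1 a_2})$, so $\At_{-}(\Vb_2(\mcM))=[G]$. For the graded manifold $\F_2(\mcM)$, the transition law \eqref{eq transition function for iota pi grT(M)} shows that the obstruction to a global change of degree-$2$ coordinate $z_j\mapsto z_j + (\text{quadratic in }\zeta)$ trivializing the short exact sequence $0 \to (\mcO_{\mathcal N})_1 \cdot (\mcO_{\mathcal N})_1 \to (\mcO_{\mathcal N})_2 \to (\mcO_{\mathcal N})_2/(\mcO_{\mathcal N})_1\cdot(\mcO_{\mathcal N})_1 \to 0$ is again $[G]$ (the factor $2$ being absorbed by rescaling $\zeta$), which by definition is the obstruction class of $\F_2(\mcM)$.

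The main obstacle I expect is the bookkeeping of signs and normalization constants in the comparison with $\omega_2$: the Rothstein map involves symmetrization conventions that vary between references, so the real work lies in checking that the induced map on $H^1$ carries the automorphism cocycle to $[G]$ on the nose, and not to $-[G]$ or $c[G]$ for some universal constant. Once this is settled, the identifications with $\At_{-}(\Vb_2(\mcM))$ and with the obstruction class of $\F_2(\mcM)$ follow by directly unwrapping the definitions of the functors $\gr\circ \oT$, parity change $\pi$, and the inverse functor $\iota$, together with the definition of the Atiyah class of a short exact sequence.
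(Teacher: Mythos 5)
Your proposal is correct and follows essentially the same route as the paper: all three classes are identified by exhibiting the common \v{C}ech cocycle $(G_j^{a_1a_2})$ in the local transition data — via the Green--Onishchik--Rothstein description of $\omega_2$, via the transition functions \eqref{eq Don Witten} for the skew-symmetric \DVB, and via \eqref{eq transition function for iota pi grT(M)} for $\F_2(\mcM)$ (with the factor $2$ absorbed by rescaling $\zeta$, exactly as the paper notes). Your plan is in fact somewhat more explicit than the paper's exposition about verifying the cocycle condition and the normalization in the Rothstein map, but the underlying argument is the same.
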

\commentMR{$\At_{+}(D)$ where $D$ is a skew-symmetric \DVB is always zero, since the associated cocycle is zero. Hence $\At_{-}(D) = \At(D)$ so we can return to $\At$ as well in the formulation of the theorem.}

Note that the map $\mcM \mapsto \F_{2}(\mcM)$ is a functor  from the category of supermanifolds of odd dimension $2$ to the category of graded manifolds of degree $2$. Indeed,  $\F_2$ is a composition of four functors: $\oT$, $\gr$, $\pi$ and $\iota$. This functor defines an equivalence of the category of supermanifolds of odd dimension $2$ and the category of graded manifolds of degree $2$ with the following additional condition. If $\mathcal N=(\mathcal N_0, \mathcal O_{\mathcal N})$ is a graded manifold of degree $2$, we additionally assume that the locally free sheaf $\mathcal E:= (\mathcal O_{\mathcal N})_2/(\mathcal O_{\mathcal N})_1\cdot (\mathcal O_{\mathcal N})_1$ is isomorphic to the sheaf of sections of $\T[\bar 1](\mathcal N_0)$. In this case we can choose local coordinates in the form (\ref{eq transition function for iota pi grT(M)}). Using (\ref{eq transition function for iota pi grT(M)}) we can write transition function for a supermanifold of odd dimension $2$
\begin{align*}
     &	y_j= F_{j}+G_j^{a_1a_2}\zeta_{a_1}\zeta_{a_2} ;\\ 
	&\theta_a =  H_{a}^i\zeta_i.
\end{align*}
These transition functions satisfy the cocycle condition. Indeed, consider three charts $\mathcal U_1, \mathcal U_2$ and $\mathcal U_3$ with $\mathcal U_1\cap \mathcal U_2\cap \mathcal U_3\ne \emptyset$. Denote by $T_{ij}$ the transition function  $T_{ij}: \mathcal U_i\to \mathcal U_j$. Consider the following composition of maps
$$
T_{31}\circ T_{23}\circ  T_{12} =:  R.
$$
Since $\F_2$ is a functor, we get
$$
\F_2(T_{31})\circ \F_2(T_{23})\circ \F_2( T_{12}) = \F_2( R).
$$
The composition $\F_2(T_{31})\circ \F_2(T_{23})\circ \F_2( T_{12})$ is equal to $id$, since the cocycle condition for the graded manifold $\mathcal N$ holds true. Therefore $\F_2( R)=id$. The morphism $R:\mathcal U_1\to \mathcal U_1$ is completely defined by its image $\F_2( R)=id$. Therefore, $R=id$. 

Denote the category of supermanifold of odd dimension $2$ by $\mathrm{Smf_2}$ and the category of graded manifolds of degree $2$ and of odd dimension $2$ with the additional condition for $ (\mathcal O_{\mathcal N})_2/(\mathcal O_{\mathcal N})_1\cdot (\mathcal O_{\mathcal N})_1$ as above, by $\mathrm{GrMan_2T}$. Now we can summarize our results in the following theorem. 

\begin{theorem}\label{theor cat Smf2 and GrMan2T are equivalent}
    The categories $\mathrm{Smf_2}$ and $\mathrm{GrMan_2T}$ are equivalent. 
\end{theorem}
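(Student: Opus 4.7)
The plan is to construct a quasi-inverse functor $\mathcal{S}:\mathrm{GrMan_2T}\to \mathrm{Smf_2}$ and verify $\F_2\circ\mathcal{S}\simeq\id$ and $\mathcal{S}\circ\F_2\simeq\id$. On objects, the construction of $\mathcal{S}$ is essentially implicit in the paragraph preceding the statement: given $\mathcal{N}\in\mathrm{GrMan_2T}$, the hypothesis $(\mathcal{O}_\mathcal{N})_2/(\mathcal{O}_\mathcal{N})_1\cdot(\mathcal{O}_\mathcal{N})_1\simeq \Sec(\T[\bar 1]\mathcal{N}_0)$ allows a choice, on each chart of an atlas of $\mathcal{N}$, of graded coordinates $(x_i,\zeta_a,z_s)$ whose transition laws take the shape (\ref{eq transition function for iota pi grT(M)}); one then reads off the data $(F_j,H_a^b,G_j^{a_1a_2})$ and declares $\mathcal{S}(\mathcal{N})$ to be the supermanifold glued from these charts using
\begin{align*}
y_j &= F_j(x) + G_j^{a_1a_2}\zeta_{a_1}\zeta_{a_2},\\
\theta_a &= H_a^i\zeta_i.
\end{align*}

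First I would confirm that $\mathcal{S}(\mathcal{N})$ is a well-defined supermanifold and that $\mathcal{S}$ acts on morphisms. The cocycle condition on triple overlaps is exactly the functoriality argument already sketched before the statement: if $R=T_{31}\circ T_{23}\circ T_{12}$ is the composite of three transition morphisms of $\mathcal{N}$, functoriality of $\F_2$ yields $\F_2(R)=\id$, and since the tuple $(F_j,H_a^b,G_j^{a_1a_2})$ can be read off from $\F_2(R)$, one concludes $R=\id$. The same observation shows $\F_2$ is faithful on local self-morphisms, which permits lifting of morphisms in $\mathrm{GrMan_2T}$ to morphisms in $\mathrm{Smf_2}$.

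Next, I would verify that the isomorphism class of $\mathcal{S}(\mathcal{N})$ is independent of the auxiliary identification with $\Sec(\T[\bar 1]\mathcal{N}_0)$ and of the chosen splitting of $(\mathcal{O}_\mathcal{N})_2\twoheadrightarrow (\mathcal{O}_\mathcal{N})_2/(\mathcal{O}_\mathcal{N})_1\cdot(\mathcal{O}_\mathcal{N})_1$: a different choice alters $G_j^{a_1a_2}$ by a coboundary in $H^1(\mathcal{N}_0,\bigwedge^2 \mathcal{E}^*\otimes \T\mathcal{N}_0)$, where $\mathcal{E}$ denotes the locally free sheaf with frame $(\zeta_a)$, and by Green's classification this replaces $\mathcal{S}(\mathcal{N})$ by an isomorphic supermanifold. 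I expect this to be the subtlest step, because one must promote a coordinate-level prescription to a canonical construction, well defined up to natural isomorphism, rather than merely up to isomorphism of objects.

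Finally the natural isomorphisms $\F_2\circ\mathcal{S}\simeq\id$ and $\mathcal{S}\circ\F_2\simeq\id$ can be verified on any adapted atlas: applying $\oT$, $\gr$, $\pi$ and $\iota$ to the transitions of $\mathcal{S}(\mathcal{N})$ reproduces (\ref{eq transition function for iota pi grT(M)}), while reading off $(F_j,H_a^b,G_j^{a_1a_2})$ from $\F_2(\mcM)$ for $\mcM\in\mathrm{Smf}_2$ and reassembling recovers (\ref{eq starting smf m=2}), since Taylor expansions in only two odd coordinates terminate at the quadratic term so the higher-order tails in (\ref{eq starting smf m=2}) are automatically absent. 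The resulting coordinate-level identifications are natural in the transition data, and glue to the required global natural isomorphisms of functors.
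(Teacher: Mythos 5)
Your proposal follows essentially the same route as the paper: the paper's argument is precisely the paragraph preceding the theorem, namely reading off the data $(F_j,H_a^b,G_j^{a_1a_2})$ from adapted coordinates on $\mathcal N$, gluing a supermanifold from the transitions $y_j=F_j+G_j^{a_1a_2}\zeta_{a_1}\zeta_{a_2}$, $\theta_a=H_a^i\zeta_i$, and verifying the cocycle condition via functoriality of $\F_2$ together with the fact that a morphism in odd dimension $2$ is determined by its $\F_2$-image. Your additional check that the construction is independent of the choice of splitting and of the identification with $\Sec(\T[\bar 1]\mathcal N_0)$ (via Green's classification) is a welcome refinement of a point the paper leaves implicit, but it does not change the method.
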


In \cite{RV} we generalize this theorem to the category of supermanifolds of any odd dimension.

\begin{remark}
    If our supermanifold $\mcM$ has the odd dimension $m> 2$,  we still can repeat the procedure above. Therefore the functor $\F_2$ is a functor from the category of supermanifolds to the category of graded manifold of degree $2$. However for any dimension the functor $\F_2$ is not an embedding. In this case from a graded manifold of degree $2$ we can recover a supermanifold module $\mathcal J^3$.  
\end{remark}

Summing up, in this section we showed that the first obstruction class to splitting of the supermanifold in the sense of \cite[Section 2]{Witten Atiyah classes} coincides with the obstruction class of the splitting of the double vector bundle $\gr \oT(\mathcal M) $ and with the obstruction class of the splitting of the graded manifold $\F_2(\mathcal M) $. A general splitting theory for supermanifolds of odd dimension $m$ based on splitting of the corresponding $m$-fold vector bundles and graded manifolds of degree $m$ will be developed in our oncoming paper.

\section{A generalization of the Donagi--Witten construction}\label{sec gen Don Witten}

In this section we give a construction of functors $\F_n$, where $n=2,3,\ldots, \infty$,  from the category of supermanifolds to the category of graded manifolds of degree $n$. Due to the size of this paper and technical difficulty of the proof of the main result, the existence of the functor inverse $\iota$ for any supermanifold of the form $\gr\circ \oT(\mathcal M)$, where $\mcM$ is a supermanifold, we leave details of this proof to Part~$II$ of this paper. Here we give only the idea of the proof.

The functor $\F_n$ is again a composition of four functors: the $(n-1)$-times (or infinity many times for $n=\infty$) iterated antitangent  functor $\oT^{(n-1)}: = \oT\circ \cdots \circ \oT$, the functor split $\gr$, the functor parity change $\pi$ and the functor inverse $\iota$. If $\mcM$ is a supermanifold, then $\oT^{(n-1)}(\mcM)$, $\gr \circ \oT^{(n-1)}(\mcM)$ are already defined.  
Denote by $\Delta$ the maximal multiplicity free weight system generated by an odd weight $\alpha$ and by even weights $\beta_1,\ldots, \beta_{n-1}$, see Definition \ref{def weight system}. We need the following propositions.

\begin{proposition}\label{prop gr T is n-fold}
    The supermanifold $\gr \circ \oT^{(n-1)}(\mcM)$ is an $n$-fold vector bundle of type $\Delta$, where $\Delta$ is the maximal multiplicity free system generated by an odd weight $\alpha$ and by even weights $\beta_1,\ldots, \beta_{n-1}$. 
\end{proposition}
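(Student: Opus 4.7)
The plan is to identify on $\gr \circ \oT^{(n-1)}(\mcM)$ a collection of $n$ pairwise commuting Euler vector fields assigning to each local coordinate a weight in $\Delta$, which by the Grabowski--Rotkiewicz characterization of $r$-fold vector bundles via commuting Eulers (recalled in Subsection~\ref{sec_DVBs}) identifies it as an $n$-fold vector bundle of type $\Delta$.

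First set up local coordinates: starting from a chart $\mathcal U$ on $\mcM$ with graded coordinates $(x^A)$, successive applications of $\oT$ produce coordinates $\dd^I x^A$ on $\oT^{(n-1)}(\mcM)$ indexed by subsets $I \subset \{1,\ldots,n-1\}$ (with $\dd^\emptyset x^A := x^A$), of parity $|x^A| + |I| \pmod{2}$. Each instance of $\oT$ is a vector bundle functor, so the $i$-th application contributes a canonical Euler vector field $\Euler_{\beta_i}$ acting by $\Euler_{\beta_i}(\dd^I x^A) = \mathbf{1}_{i \in I}\,\dd^I x^A$. These commute pairwise by functoriality of $\oT$ and preserve the filtration by the odd ideal $\mcJ$, hence descend to commuting Euler fields on $\gr \oT^{(n-1)}(\mcM)$. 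The functor $\gr$ moreover supplies an extra Euler field $\Euler_\alpha$ counting the $\mcJ$-adic (odd) degree in the split structure sheaf $\bigoplus_p \mcJ^p/\mcJ^{p+1}$; on a generator $\dd^I x^A$ it acts by $\epsilon(I, A) \in \{0,1\}$, where $\epsilon(I,A) = 1$ iff $|x^A| + |I|$ is odd. This Euler commutes with every $\Euler_{\beta_i}$ since $\beta_i$-scaling is parity-preserving, hence compatible with the $\mcJ$-adic filtration. Setting
\[
\mathrm{wt}(\dd^I x^A) \;=\; \epsilon(I, A)\,\alpha \;+\; \sum_{i\in I}\beta_i,
\]
every local generator acquires a weight in $\Delta$ (both components lie in $\{0,1\}$, hence multiplicity-free), and its parity matches $\chi(\mathrm{wt})$.

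It then remains to verify that the transition functions of $\gr \oT^{(n-1)}(\mcM)$ preserve this $\Delta$-grading. The transitions on $\oT^{(n-1)}(\mcM)$ are obtained by iteratively applying the derivation rule to the original transitions on $\mcM$, and are multilinear in each $\dd_i$-level because $\oT$ is fiberwise linear; thus the $\beta_i$-grading is preserved automatically. Passing to $\gr$ then restricts each transition to its graded piece in $\mcJ^p/\mcJ^{p+1}$ for the appropriate $p = \epsilon(I, A)$, which enforces the correct $\alpha$-weight and discards higher-$\alpha$-weight contributions---for $n=2$ these are precisely the cubic $(G_j^{a_1 a_2})_{x_b}\,\dd x_b\,\xi_{a_1}\xi_{a_2}$ terms visible in Section~\ref{sec Witten}.

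The main technical obstacle is the Leibniz-type bookkeeping showing that, after $\gr$, the transition of $\dd^I x^A$ consists exclusively of monomials whose factor weights sum to $\mathrm{wt}(\dd^I x^A)$. This reduces to the observation that, since $\oT$ is the parity-shifted tangent, each application of $\dd_i$ increments the odd degree by exactly one, so the $\mcJ$-adic degree of $\dd^I x^A$ is determined purely by $|x^A|$ and $|I|$; compatibility across charts then follows by repeated application of the super Leibniz rule to the original cocycle data. The argument proceeds by induction on $n$, the base case $n=2$ being the content of Section~\ref{sec Witten}.
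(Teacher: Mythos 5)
Your proposal is correct and follows essentially the same route as the paper's proof: the same local coordinates $\dd_I(x_a),\dd_J(\xi_b)$ on $\gr\circ\oT^{(n-1)}(\mcM)$, the same weight assignment $\mathrm{wt}=\epsilon\,\alpha+\sum_{i\in I}\beta_i$ determined by the parity of $|x^A|+|I|$, and the same verification that the Leibniz-rule expansion of the transition functions, reduced modulo $\mathcal I^2$, produces only monomials whose factor weights (indexed by a decomposition of $I$) sum to the correct total. The commuting-Euler-vector-field packaging and the proposed induction on $n$ are cosmetic variations on the paper's direct check for general $n$.
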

\begin{proof}
    Let $\dd_i$, where $i=1,\cdots, n-1$, be the iterated de Rham differentials. They are vector fields in the structure sheaf of $\oT^{(n-1)}(\mcM)$. Denote by $\dd_I$, where $I=(i_1,\ldots, i_k)$, the composition $\dd_{i_1}\circ \ldots \circ \dd_{i_k}$. Let $\mathcal I$ be the ideal generated by odd elements in $\mathcal O_{\gr \circ \oT^{(n-1)}(\mcM)}$ and let us choose a chart $\mathcal U$ on $\mcM$ with local coordinates $(x_a,\xi_b)$. Then the corresponding standard local coordinates in the chart $\oT^{(n-1)}(\mathcal U)$ are $(\dd_I(x_a),\dd_J(\xi_b))_{C (I),C(J) \leq n-1}$, where $C (I)$ is the cardinality of $I$. Hence  the corresponding local coordinates in the chart $\gr \circ\oT^{(n-1)}(\mathcal U)$ are $(\dd_I(x_a)+ \mathcal I^2,\dd_J(\xi_b)+ \mathcal I^2)_{C (I),C(J) \leq n-1}$. We assign the weight $\alpha+ \beta_{i_i} +\cdots+ \beta_{i_k}$ to $\dd_I(Z)+ \mathcal I^2$, where $Z\in \{ x_a,\xi_b\}$, if $\dd_I(Z)$ is odd and the weight $\beta_{i_i} +\cdots+ \beta_{i_k}$ to $\dd_I(Z)+ \mathcal I^2$ if $\dd_I(Z)$ is even. 
    
    It remains to prove that transition functions in $\gr \circ \oT^{(n-1)}(\mcM)$ preserve our weights. Let $\mathcal U'$ be another chart on $\mcM$ with coordinates $(x'_a,\xi'_b)$ such that $\mathcal U\cap \mathcal U'\ne \emptyset$. Let 
    $$
    x_a'= \sum_{C (K)=2k} F_{K}(x) \xi_K
    $$
  be the expression of $x_a'$ in coordinates of $\mathcal U$. Then $\dd_I(x'_a)+ \mathcal I^2$ in coordinates of $\gr \circ\oT^{(n-1)}(\mathcal U)$ is a sum of monomials in the following form. 
\begin{equation}\label{eq weights}
    H_{I,K}(x) \dd_{I_1} (x_{a_1}) \cdots \dd_{I_p} (x_{a_p})  \dd_{I_{p+1}} (\xi_{b_1}) \cdots \dd_{I_q} (\xi_{b_q}) + \mathcal I^2,
\end{equation}
where $H_{I,K}(x)$ is a certain derivative of $F_K(x)$, which has weight $0$, and  $(I_1,\ldots,I_q)$ is a decomposition of the sequence $I$ into $q$ parts. We see that the weights of $\dd_I(x'_a)+ \mathcal I^2$ and of (\ref{eq weights}) coincide. Similarly for $\dd_I(\xi'_b)+ \mathcal I^2$. This completes the proof. 
\end{proof}

\begin{proposition}\label{prop gr T^n is embedding}
The functor $\gr \circ \oT^{(n-1)}$ is an embedding of the category of supermanifolds of odd dimension $n$ into the category of $n$-fold vector bundles of type $\Delta$.  
\end{proposition}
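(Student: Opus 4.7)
The plan is to prove both faithfulness and injectivity on isomorphism classes by a direct local computation, building on the explicit description of transition functions of $\gr \circ \oT^{(n-1)}(\mcM)$ given in the proof of Proposition~\ref{prop gr T is n-fold}. The key observation is an \emph{extraction principle}: the coefficients of the transition functions of $\mcM$ can be individually read off from prescribed weighted monomials appearing in the transition functions of $\gr \circ \oT^{(n-1)}(\mcM)$. Concretely, write the transition functions of $\mcM$ between two charts as
$$
y_j = \sum_{|K| \text{ even}} F_j^K(x)\, \xi_K, \qquad \eta_a = \sum_{|K| \text{ odd}} H_a^K(x)\, \xi_K,
$$
with $|K| \le n$. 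For $K = (a_1, \ldots, a_k)$ choose any tuple of distinct indices $I = (i_1, \ldots, i_{k-1}) \subset \{1, \ldots, n-1\}$, which is possible since $k - 1 \le n - 1$. A direct application of the Leibniz rule shows that the coefficient of the monomial $\dd_{i_1}(\xi_{a_1}) \cdots \dd_{i_{k-1}}(\xi_{a_{k-1}})\, \xi_{a_k}$ in $\dd_I(y_j) \bmod \mathcal{I}^2$ equals $F_j^K$ up to a combinatorial sign; the analogous statement recovers each $H_a^K$. Since this monomial is a generator of weight $\alpha + \beta_{i_1} + \cdots + \beta_{i_{k-1}}$ on the $n$-fold vector bundle $\gr \circ \oT^{(n-1)}(\mcM)$, every coefficient in the transition functions of $\mcM$ is genuinely encoded in the transition data of its image.

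Faithfulness follows at once from this principle: if $f, g: \mcM \to \mathcal{N}$ satisfy $\gr \circ \oT^{(n-1)}(f) = \gr \circ \oT^{(n-1)}(g)$, then applying the extraction to the Taylor expansions in $\xi$ of $f^*(y_j)$ and $g^*(y_j)$ (respectively $f^*(\eta_a)$ and $g^*(\eta_a)$) shows that each coefficient agrees, so $f^* = g^*$ on local generators of $\mathcal{O}_{\mathcal{N}}$ and hence $f = g$.

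For injectivity on isomorphism classes, one works chart by chart: from the \v{C}ech cocycle of $\gr \circ \oT^{(n-1)}(\mcM)$ one recovers, via the same extraction, a \v{C}ech cocycle for $\mcM$ relative to the same atlas of $\mcM_0$. The main technical obstacle is here: an abstract isomorphism of $n$-fold VBs of type $\Delta$ need not a priori arise from a supermanifold morphism, so one must verify that the coherence conditions imposed by the $n$-fold VB structure (weight-compatibility across all components) translate precisely into the cocycle identities required for the reconstructed coefficients to define a supermanifold morphism. A cleaner alternative that bypasses this difficulty is to invoke Green's classification recalled earlier: the class of $\mcM$ in $H^1(\mcM_0, \mathcal{A}ut_{(2)}\mathcal{O})/\mathrm{Aut}(\mathbb{E}^*)$ is manifestly recoverable from the transition data of $\gr \circ \oT^{(n-1)}(\mcM)$ via the extraction principle, yielding the desired injectivity on isomorphism classes.
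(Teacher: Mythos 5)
Your proposal is correct and, at its core, follows the same route as the paper: both arguments reduce to comparing transition functions of $\mcM$ and of its image over a common atlas of $\mcM_0$. The difference is one of completeness. The paper's proof only treats uniqueness of a preimage under the assumption that the two images are \emph{equal} (not merely isomorphic) as $n$-fold vector bundles, and its decisive step --- ``$\gr \circ \oT^{(n-1)}(T) = \gr \circ \oT^{(n-1)}(T')$ implies $T = T'$'' --- is asserted without justification. Your extraction principle is precisely the missing justification: since the odd dimension is $n$, every coefficient $F_j^K$, $H_a^K$ with $|K|=k\le n$ can be isolated as the coefficient of a monomial $\dd_{i_1}(\xi_{a_1})\cdots\dd_{i_{k-1}}(\xi_{a_{k-1}})\,\xi_{a_k}$ (one unhit odd factor, each remaining $\xi$ hit by exactly one of the $k-1\le n-1$ available differentials, nothing landing on the coefficient function, everything else dying in $\mathcal I^2$), which matches the structure of formula (\ref{eq weights}) in the proof of Proposition \ref{prop gr T is n-fold}. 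This also gives faithfulness on morphisms, which the paper does not address explicitly although it is part of what ``embedding'' should mean. Finally, you are right to flag that injectivity on \emph{isomorphism classes} is genuinely harder, since an abstract isomorphism of $n$-fold vector bundles of type $\Delta$ need not be induced by a supermanifold morphism; the paper silently avoids this by working with equality of images. Your appeal to Green's classification does not by itself close that gap (one still has to relate the two atlases through the given abstract isomorphism before comparing cocycles), but it is an honest and reasonable reduction, and in any case your argument proves at least as much as the paper's.
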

\begin{proof}

Let $\mathcal N$ be an $n$-fold vector bundle. Let us show that if a preimage $\mcM$ of $\mathcal N$ exists, it is unique.   The idea of the proof of similar to the idea of the proof of Theorem \ref{theor cat Smf2 and GrMan2T are equivalent}. Assume that there exists another supermanifold $\mcM'$ with $\gr \circ \oT^{(n-1)}(\mcM)= \gr \circ \oT^{(n-1)}(\mcM')$.  By construction we have $\mcM_0=\mcM'_0= \mathcal N_0$. Let us choose an atlas $\{\mathcal U_i\}$ on $\mcM$ and an atlas $\{\mathcal U'_i\}$ on $\mcM'$ such that  $(\mathcal U_i)_0=(\mathcal U'_i)_0$. Clearly such atlases there exist, since it is sufficient to choose any atlas on $\mcM_0$ of Stein domains. Now consider two charts $\mathcal U'_1$ and $\mathcal U'_2$ with $\mathcal U'_1\cap \mathcal U'_2\ne \emptyset$ on $\mcM'$. Denote by $T'$ the transition function  $T': \mathcal U'_1\to \mathcal U'_2$ and by  $T$ the transition function  $T: \mathcal U_1\to \mathcal U_2$. By our assumption $\gr \circ \oT^{(n-1)} (T)= \gr \circ \oT^{(n-1)}(T')$. But this implies that $T=T'$. The proof is complete. 
\end{proof}

\begin{remark}
The question when the functor $\gr \circ \oT^{(n-1)}$ possesses a preimage is treated in \cite{RV}.     
\end{remark}

Now we can use results of \cite{Grab} and \cite{Vish} to define the functor inverse. Note that these two results lead to two different approaches to the problem. Let us start with a description of results  \cite{Grab} or \cite{Vish}. In \cite{Grab} a functor $\G_n$ was constructed from the category of graded manifolds of degree $n$ to the category of $n$-fold symmetric vector bundles. In \cite{Vish} a functor $\V_n$ was constructed from the category of graded manifolds of degree $n$ to the category of $n$-fold vector bundles with $n-1$ odd commutative homological vector fields. In both cases it was proven that these functors are equivalence of categories. In Section \ref{sec Witten} we saw that due to the fact that  the first obstruction class $\omega_2$ is an element in 
$$
H^1(\mcM_0,\T[\bar 1] (\mathcal M_0) \otimes \bigwedge^2\mathbb E^*)
$$
we can construct a graded manifold of degree $2$ with transition functions (\ref{eq transition function for iota pi grT(M)}) from a double vector bundle given by (\ref{eq transition function for pi grT(M)}). 
In terms of \cite{Grab} this means that the double vector bundle with the obstruction class $\omega_2$ is symmetric and  $\pi\circ \gr \circ \oT(\mcM)$ is in the image of the functor $\G_2$ constructed in \cite{Grab}. Our functor inverse $\iota$ is the inversion of the equivalence $\G_2$. More generally we have
\begin{theorem}\label{ther G_n}
    For a supermanifold $\mcM$ of odd dimension $n$ the image $\pi\circ \gr \circ \oT^{(n-1)}(\mcM)$ is a symmetric $n$-fold vector bundle. Therefore there exists a graded manifold $\mathcal N$ of degree $n$ such that $\G_n(\mathcal N)\simeq \pi\circ \gr \circ \oT^{(n-1)}(\mcM)$. In other words the functor inverse $\iota= \G_n^{-1}$ is defined on the image of the functor $\pi\circ \gr \circ \oT^{(n-1)}$. 
\end{theorem}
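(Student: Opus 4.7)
The theorem has two parts. The second (existence of a graded manifold $\mathcal N$ of degree $n$ with $\G_n(\mathcal N) \simeq \pi \circ \gr \circ \oT^{(n-1)}(\mcM)$) is immediate once the first is known, because $\G_n$ is an equivalence of categories by \cite{Grab}; one simply takes $\mathcal N := \G_n^{-1}\bigl(\pi \circ \gr \circ \oT^{(n-1)}(\mcM)\bigr)$ and sets $\iota := \G_n^{-1}$ on the relevant essential image. So the substantive task is to show that $\pi \circ \gr \circ \oT^{(n-1)}(\mcM)$ is symmetric, i.e.\ carries an $S_n$-action permuting the $n$ generators $\alpha, \beta_1, \ldots, \beta_{n-1}$ of the weight system $\Delta$.

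First I would produce the obvious $S_{n-1}$-action coming from the iterated composition $\oT^{(n-1)} = \oT \circ \cdots \circ \oT$: it permutes the $n-1$ de Rham differentials $\dd_1, \ldots, \dd_{n-1}$, and hence permutes the weights $\beta_1, \ldots, \beta_{n-1}$. In the coordinate system $(\dd_I(x_a) + \mathcal I^2, \dd_J(\xi_b) + \mathcal I^2)$ from the proof of Proposition \ref{prop gr T is n-fold} this action is just relabelling of indices. Both $\gr$ and $\pi$ are natural with respect to such relabellings, so the $S_{n-1}$-action descends to $\pi \circ \gr \circ \oT^{(n-1)}(\mcM)$.

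Next I would construct one additional involution $\sigma$ swapping the weight $\alpha$ with the weight $\beta_1$; combined with the $S_{n-1}$ on the $\beta_i$ this generates the full $S_n$. Here $\pi$ plays its essential role: after parity change in the $\alpha$-direction, the coordinates of weight $\alpha$ (namely $\xi_b$) and those of weight $\beta_1$ (namely $\dd_1 \xi_b$ in $\gr \circ \oT^{(n-1)}(\mcM)$) have the same parity, so one can try $\sigma^*(\xi_b) = \dd_1 \xi_b$, $\sigma^*(\dd_1 \xi_b) = \xi_b$, and extend to the remaining generators $\dd_I(x_a)$, $\dd_I(\xi_b)$ by the induced action on weights. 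The model case $n = 2$, worked out explicitly in Section \ref{sec Witten}, indicates what this extension looks like: the antisymmetric combination $\xi_{a_2}\dd \xi_{a_1} - \xi_{a_1}\dd \xi_{a_2}$ appearing in (\ref{eq transition function for pi grT(M)}) is precisely the skew-symmetrization that makes the swap $\xi \leftrightarrow \dd \xi$ compatible with transition functions on chart overlaps. The braid relations between $\sigma$ and the transpositions generating $S_{n-1}$ then reduce to a local coordinate check.

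The main obstacle will be the global verification that $\sigma$ is well-defined, i.e.\ respects the transition functions on overlaps. Transition functions of $\oT^{(n-1)}(\mcM)$ are obtained by iterating the graded Leibniz rule $n-1$ times on the transition functions (\ref{eq starting smf m=2}) of $\mcM$, and after passing to $\gr$ only the multilinear parts in the $(\dd_I(\xi_b))$ survive. Showing that this surviving multilinear combination is invariant under the swap $\xi_b \leftrightarrow \dd_1 \xi_b$, with the appropriate sign dictated by the odd parity of $\alpha$, amounts to a combinatorial identity for nested Leibniz expansions. This is the technical heart of the statement and the reason the paper defers the full proof to Part~II. Everything else --- descent of $S_{n-1}$, closure of the group relations, and the passage to $\mathcal N$ via $\G_n^{-1}$ --- is essentially formal once this check is in hand.
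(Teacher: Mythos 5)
The first thing to say is that the paper does not actually prove Theorem~\ref{ther G_n}: immediately before this section the authors state that the existence of the functor inverse on the image of $\gr\circ\oT^{(n-1)}$ is deferred to Part~II, and what surrounds the theorem is only the $n=2$ case worked out in coordinates in Section~\ref{sec Witten} plus the assertion that it generalizes. So there is no in-paper argument to compare yours against line by line; your proposal has to be judged on its own merits.

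As a plan, your route is the natural one and matches the intended meaning of ``symmetric'' in \cite{Grab}: a symmetric $n$-fold vector bundle is one carrying a compatible action of $S_n$ permuting the $n$ vector bundle structures, so exhibiting the $S_{n-1}$ coming from the canonical flips of the iterated antitangent functor and then adjoining one transposition exchanging $\alpha$ with $\beta_1$ is exactly what is needed, and you are right that $\pi$ is what makes the extra transposition possible at the level of parities. But you should be aware that what you label ``the main obstacle'' is in fact the entire content of the theorem. The $S_{n-1}$ part is soft; the claim that the swap $\xi_b\leftrightarrow\dd_1\xi_b$ (with the sign forced by the odd weight $\alpha$, i.e.\ the skew-symmetrization visible in \eqref{eq transition function for pi grT(M)}) is compatible with the transition functions obtained by $(n-1)$-fold Leibniz expansion of \eqref{eq starting smf m=2} and reduction modulo $\mathcal I^2$ is precisely the statement that the $n$-fold vector bundle is symmetric, and your proposal names this as ``a combinatorial identity for nested Leibniz expansions'' without proving it. Until that identity is established (including the verification that the adjoined involution satisfies the braid relations with the flips globally, not just on weights), the proof is not complete --- which is consistent with the authors' own decision to defer it, but means your text is a correct outline rather than a proof. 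A secondary point worth flagging: the paper's own terminology wavers between ``symmetric'' and ``skew-symmetric'' (compare the DVB discussion in Section~\ref{sec_DVBs}, where odd $\alpha$ corresponds to \emph{skew}-symmetric DVBs); in any write-up you should state explicitly which signed version of the $S_n$-structure you are constructing, since for odd weights the involution restricted to the core carries a minus sign.
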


Let us describe another approach based on results of  \cite{Vish}.  The graded super\-manifold $\gr \circ \oT(\mcM)$ possesses a natural odd homological vector field. Indeed, let $\dd_{R}$ be the exterior derivative (de Rham defferential) of the supermanifold $\mcM$. Clearly $\dd_{R}$ is an odd homological vector field in the structure sheaf $\mathcal O_{\oT(\mcM)}$ of $\oT(\mcM)$. Denote by $\mathcal I$ the sheaf of ideals locally generated by all odd variables of $\mathcal O_{\oT(\mcM)}$. Then we have
$$
\dd_{R} (\mathcal I) \subset \mathcal O_{\oT(\mcM)}\quad \text{and} \quad \dd_{R} (\mathcal I^2) \subset \mathcal I. 
$$
Therefore we have an induced operator in the structure sheaf of $\gr \circ \oT(\mcM)$
$$
\mathcal O_{\gr \circ \oT(\mcM)} = \gr \mathcal O_{\oT(\mcM)} = \bigoplus_{p\geq 0} \mathcal I^p/\mathcal I^{p+1}. 
$$
 Clearly this induced operator is odd and homological. Hence the double vector bundle  $\gr \circ \oT(\mcM)$ is a double vector bundle with a homological vector field. Therefore by \cite{Vish} there exists a graded manifold $\V_2^{-1}(\gr \circ \oT(\mcM))$ of degree $2$. This procedute can be generalized.  Now we can formulate a general result.
\begin{theorem}\label{ther V_n}
     For a supermanifold $\mcM$ of odd dimension $n$ the image $\gr \circ \oT^{(n-1)}(\mcM)$ is an $n$-fold vector bundle with $n-1$ commuting odd homological vector fields. Therefore there exists a graded manifold $\mathcal N$ of degree $n$ such that $\V_n(\mathcal N)\simeq \gr \circ \oT^{(n-1)}(\mcM)$. In other words the functor $\iota'= \V_n^{-1}$ is defined on the image of $\gr \circ \oT^{(n-1)}$. 
\end{theorem}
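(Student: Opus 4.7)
The first assertion — that $\gr \circ \oT^{(n-1)}(\mcM)$ carries the structure of an $n$-fold vector bundle of type $\Delta$ — has already been established in Proposition~\ref{prop gr T is n-fold}. Thus the plan is to produce $n-1$ pairwise (graded-)commuting odd homological vector fields on it and then invoke the equivalence $\V_n$ of \cite{Vish} to obtain the graded manifold $\mathcal N$ of degree $n$.

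For the construction of the vector fields, I would start from the $n-1$ canonical de Rham differentials $\dd_1,\ldots,\dd_{n-1}$ on the iterated antitangent supermanifold $\oT^{(n-1)}(\mcM)$: the $i$-th differential is the de Rham differential associated to the $i$-th application of the antitangent functor, i.e.\ in coordinates $(x^A,\dd_i x^A)$ (with the remaining $\oT$-slots frozen) it is the canonical odd vector field $\dd_i x^A\,\partial/\partial x^A$. Naturality of $\oT$ plus the fact that the different $\oT$-structures mutually commute gives, by a direct check on coordinate generators, $\dd_i^2=0$ and $\dd_i\dd_j+\dd_j\dd_i=0$ for $i\ne j$; this is the same mechanism by which $\T\T M$ carries two strictly anticommuting de Rham differentials (cf.\ \cite{Voronov}).

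The next step is to check that every $\dd_i$ descends to the associated graded $\gr\circ\oT^{(n-1)}(\mcM)$. For this I would reuse the observation made in the excerpt just before the theorem statement: with $\mathcal I$ the sheaf of ideals generated by odd elements of $\mathcal O_{\oT^{(n-1)}(\mcM)}$, one has $\dd_i(\mathcal I)\subset \mathcal O_{\oT^{(n-1)}(\mcM)}$ and $\dd_i(\mathcal I^{p+1})\subset \mathcal I^{p}$, so $\dd_i$ induces a well-defined odd derivation on $\gr\mathcal O_{\oT^{(n-1)}(\mcM)}=\bigoplus_p \mathcal I^p/\mathcal I^{p+1}$. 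The relations $\dd_i^2=0$ and $\dd_i\dd_j+\dd_j\dd_i=0$ survive the passage to the associated graded. A local-coordinate computation in the notation of Proposition~\ref{prop gr T is n-fold} shows that $\dd_i$ is homogeneous of multi-weight $\beta_i$ with respect to the weight system $\Delta$, so the $n-1$ induced operators are exactly the datum of $n-1$ commuting odd homological vector fields of the required weights on the $n$-fold vector bundle $\gr\circ\oT^{(n-1)}(\mcM)$.

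Finally, I would invoke the equivalence $\V_n$ of \cite{Vish} between the category of graded manifolds of degree $n$ and the category of $n$-fold vector bundles equipped with $n-1$ commuting odd homological vector fields of the right weights. The previous paragraph places $\gr\circ\oT^{(n-1)}(\mcM)$ in the essential image of $\V_n$, hence there is a graded manifold $\mathcal N$ of degree $n$, unique up to isomorphism, with $\V_n(\mathcal N)\simeq \gr\circ\oT^{(n-1)}(\mcM)$; this is what the theorem asserts, and the naturality of each construction in $\mcM$ promotes this to the functor $\iota'=\V_n^{-1}$. The main obstacle, as the authors themselves flag, is not the construction of the $\dd_i$ but the verification that the full package (weights, strict anticommutation on the graded, morphism-level intertwining) matches the precise target of the equivalence $\V_n$; in particular, showing that any morphism of $n$-fold vector bundles that intertwines the $\dd_i$ comes from a morphism of the underlying graded manifolds is the delicate compatibility step, and it is reasonable to defer the full technicalities to Part~II.
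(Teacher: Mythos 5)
Your proposal follows essentially the same route as the paper, which itself only sketches the argument: the $n-1$ iterated de Rham differentials on $\oT^{(n-1)}(\mcM)$ descend to commuting odd homological vector fields on the associated graded, and one then invokes the equivalence $\V_n$ of \cite{Vish}, with the full technical verification deferred to Part II. Your additional details (the ideal-filtration argument for descent and the weight homogeneity of the induced $\dd_i$) are consistent with, and elaborate on, the paper's stated idea.
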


 The idea of the proof is simple: the $(n-1)$-times iterated tangent functor leads to $n-1$ commuting de Rham differentials, which induce $(n-1)$  commuting odd homological vector fields on $\gr \circ \oT^{(n-1)}(\mcM)$. We leave details of the proof to Part II of this paper. 
Note that in this case the functor parity change $\pi$ is included in the functor $\V_n$, therefore we do not need to apply it explicitly. We conclude this section with the following proposition
\begin{proposition}
    The functors $\iota \circ \pi \circ \gr \circ \oT^{(n-1)}$ and $\iota' \circ \gr \circ \oT^{(n-1)}$ are embeddings of the category of supermanifolds of odd dimension $n$ to the category of graded manifolds of degree $n$. 
\end{proposition}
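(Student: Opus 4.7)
The plan is to decompose each composite into constituents whose embedding behavior is already under control, and then observe that these behaviors compose. Recall that an embedding here means injectivity on isomorphism classes of objects together with faithfulness on morphisms, as established for $\gr\circ\oT^{(n-1)}$ in Proposition \ref{prop gr T^n is embedding}.

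First I would invoke Proposition \ref{prop gr T^n is embedding}, which guarantees that $\gr\circ\oT^{(n-1)}$ sends non-isomorphic supermanifolds of odd dimension $n$ to non-isomorphic $n$-fold vector bundles of type $\Delta$, and distinct morphisms to distinct morphisms. Next I would observe that the parity change functor $\pi$ is manifestly an isomorphism of the ambient category of $n$-fold vector bundles — it merely flips the parity attached to one of the weights $\beta_i$ — and therefore preserves the embedding property.

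Third, Theorem \ref{ther G_n} places the image of $\pi\circ\gr\circ\oT^{(n-1)}$ inside the subcategory of symmetric $n$-fold vector bundles, on which the functor $\G_n$ of \cite{Grab} is an equivalence with the category of graded manifolds of degree $n$. Its quasi-inverse $\iota=\G_n^{-1}$ is consequently fully faithful there, so the composite $\iota\circ\pi\circ\gr\circ\oT^{(n-1)}$ inherits the embedding property from the three preceding steps. The argument for $\iota'\circ\gr\circ\oT^{(n-1)}$ is strictly parallel, invoking Theorem \ref{ther V_n} and the equivalence $\V_n$ of \cite{Vish}; in that variant the parity change is already absorbed into the definition of $\V_n$.

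The step I expect to be the main subtlety is verifying that $\iota$ and $\iota'$ genuinely remain equivalences when restricted to the relevant image subcategories, namely the symmetric $n$-fold vector bundles of Theorem \ref{ther G_n} and the $n$-fold vector bundles carrying $n-1$ commuting odd homological vector fields of Theorem \ref{ther V_n}. Concretely, one must check that the natural isomorphisms on Hom-sets furnished by \cite{Grab} and \cite{Vish} respect the additional structure, so that every morphism in the target graded manifold lifts uniquely to a morphism of the source. I expect no new computation beyond what is already contained in \cite{Grab,Vish} together with Proposition \ref{prop gr T^n is embedding}, but the bookkeeping of local weights $\alpha,\beta_1,\ldots,\beta_{n-1}$ and parities under the iterated de Rham differentials $\dd_I$ is where the proof has to be carried out carefully.
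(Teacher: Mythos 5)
Your proposal matches the paper's own argument: it likewise deduces the statement from Proposition \ref{prop gr T^n is embedding}, the fact that $\pi$ is an equivalence of categories of $n$-fold vector bundles, and Theorems \ref{ther G_n} and \ref{ther V_n} identifying $\iota$ and $\iota'$ as (quasi-)inverses of the equivalences $\G_n$ and $\V_n$ on the relevant images. Your added remarks on checking that these equivalences restrict correctly are a reasonable elaboration but do not change the route.
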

\begin{proof}
    The proposition follows from Proposition \ref{prop gr T^n is embedding}, from the fact that $\pi$ is an equivalence of categories of $n$-fold vector bundles and from Theorems \ref{ther G_n} and \ref{ther V_n}.
\end{proof}

The inverse limit of functors $\F_n$ is denoted by $\F_{\infty}$. That is if $\mcM$ is a supermanifold,  we put $\F_{\infty} (\mcM):= \varprojlim \F_{n} (\mcM)$ and the same for morphisms.

  \section{Donagi--Witten functor for Lie superalgebras and\\ Lie supergroups}

In this section, we adapt the results of Section \ref{sec Witten} to the case of Lie superagebras and Lie supergroups. We show that there is an analogue of  the functor $\F_2$ in the category of Lie superalgebras.  We denote by $\SLie$ the category of Lie superalgebras, by $\GrLie_n$ the category of $\Z$-graded Lie superalgebras of type $\{0,1,\ldots, n\}$ with $|1|=\bar 1$ and by $\GrLie_{\infty}$ the  category of non-negatively $\mathbb Z$-graded Lie superalgebras.

\subsection{Donagi and Witten construction for Lie superalgebras}\label{sec DW for superalgebras}

In this section we will construct a functor  $\F'_2: \SLie \to \GrLie_2$.
In next sections we will use this construction to obtain a functor from the category of Lie superalgebras $\SLie$ to the category $\GrLie_{\infty}$.
As in \ref{sec Witten} the functor $\mathrm F'_2$ is a composition of four functors, which we denote by  $\oT'$ ({\it tangent}), $\mathrm{gr}'$ ({\it split}), $\pi'$ ({\it parity change}) and $\iota'$ ({\it inverse}). (To distinguish the category of Lie superalgebras in the case  we will use superscript. That is $\oT'$ instead of $\oT$ and so on.)
 Let $\mathfrak g= \mathfrak g_{\bar 0} \oplus \mathfrak g_{\bar 1}$ be a Lie superalgebra. Recall that to define a Lie superalgebra we need to define a Lie algebra $\mathfrak g_{\bar 0}$, a $\mathfrak g_{\bar 0}$-module $\mathfrak g_{\bar 1}$ and a symmetric  $\g_{\bar{0}}$-module map $ [\cdot, \cdot]:\mathfrak g_{\bar 1}\otimes \mathfrak g_{\bar 1} \to \mathfrak g_{\bar 0}$
 such that   $[[Y, Y], Y]=0$, for any $Y\in \g_{\bar{1}}$.

{\it Tangent functor $\oT'$.} Since a Lie superalgebra $\mathfrak g=
\mathfrak g_{\bar 0} \oplus \mathfrak g_{\bar 1}$ is a vector superspace, its antitangent bundle is the following linear superspace
$$
\oT'(\mathfrak g) =  \mathfrak g \oplus \mathrm{d}(\mathfrak g).
$$
Here $\mathrm{d} \g$ denote a copy of $\mathfrak g$ with reversed parity.  In more details, $V=\mathfrak g \oplus \mathrm{d}(\mathfrak g)$ is a linear superspace with the underlying space $V_{\bar 0}=\mathfrak g_{\bar 0}\oplus \dd \mathfrak g_{\bar 1}$ and with the structure sheaf $\mathcal F_{V_{\bar 0}}\otimes S^*(\mathfrak g^*_{\bar 1}\oplus \dd \mathfrak g^*_{\bar 0})$. If $X\in \mathfrak g$, sometimes we will denote by $\mathrm{d}(X)$ the corresponding element in $\mathrm{d}(\mathfrak g)$. Further the Lie superalgebra structure on $\oT'(\mathfrak g)$ is defined as follows. The Lie bracket on the vector subspece $\mathfrak g\oplus \{0\}$ coincides with the Lie bracket of $\mathfrak g$, further $\mathrm{d}(\mathfrak g)$ is a $\mathfrak g$-module, where the action of $\mathfrak g$ coincide with the adjoint action up to sign
$$
[X,\mathrm{d}Y] : = (-1)^{|X|}\mathrm{d} ([X,Y]).
$$
and the product $[\mathrm{d}(\mathfrak g), \mathrm{d}(\mathfrak g)]=0$ is trivial. 

\begin{remark}\label{rem weights of g}
	The Lie superalgebra $\oT'(\mathfrak g) $ sometimes is called in the literature  a {\it Takiff superalgebra}, named after the author of \cite{Tak}. This Lie superalgebra is a Lie superalgebra in the form $\mathfrak g\otimes_{\mathbb K}\mathbb K[\tau]$, where $\tau$ is an odd element and $\mathbb K[\tau]$ are all polynomials in $\tau$. Summing up, in our paper $\oT'$ is derived from both ``Takiff'' and ``antitangent''.
\end{remark}

\medskip

{\it Functor split $\mathrm{gr}'$}. The functor $\mathrm{gr}'$ is defined as follows. For a Lie superalgebra $\mathfrak g$ we put $\mathrm{gr}'(\mathfrak g): = \mathfrak g'$, where $\mathfrak g'$ is obtained from the Lie superalgebra $\mathfrak g$ putting $[\mathfrak g'_{\bar 1}, \mathfrak g'_{\bar 1}] =0.$ In  \cite[Theorem 3]{Vish_funk}, see also Therem \ref{theor split supergroup} below, it was shown that $\Lie(\gr\mcG) = \gr'(\Lie \mcG)$ for any Lie supergroup $\mcG$. In other words $\mathfrak g'$ is the Lie superalgebra of $\gr(\mcG)$, where $\mcG$ is a Lie supergroup with the Lie superalgebra $\mathfrak g$. Clearly $\gr'$ is defined on morphisms as well.  Let us compute the composition of the functors $\mathrm{gr}'\circ \oT'$.

\begin{lemma}\label{lem mult in gr T(g)}
	Let $\mathfrak g$ be a Lie superalgebra. Then the Lie superalgebra $\mathrm{gr}' (\oT'(\mathfrak g))$ is equal to $\oT'(\mathfrak g)$
	as $\mathfrak g_{\bar 0}$-modules. And the following holds true for the Lie superalgebra multiplication
	\begin{align*}
	[\mathfrak g_{\bar 1}, \mathfrak g_{\bar 1}] =0, \quad
	[\mathfrak g_{\bar 1}, \mathrm{d}(\mathfrak g_{\bar 0})] =0, \quad
	[\mathrm{d}(\mathfrak g), \mathrm{d}(\mathfrak g)] =0.
	\end{align*}
	and $[Y_1,\mathrm{d}(Y_2)] = -  \mathrm{d}([Y_1,Y_2])$ for $Y_1,Y_2\in \mathfrak g_{\bar 1}$.
\end{lemma}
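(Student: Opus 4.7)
The plan is to unfold the definitions of both functors and then check each of the asserted bracket formulas by a case analysis on the $\Z_2$-decomposition.

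First I would write down explicitly the $\Z_2$-grading of $\oT'(\g)$: the even part is $\oT'(\g)_{\bar 0} = \g_{\bar 0}\oplus \dd(\g_{\bar 1})$ and the odd part is $\oT'(\g)_{\bar 1} = \g_{\bar 1}\oplus \dd(\g_{\bar 0})$. Since $\gr'$ acts by killing the bracket between any two odd elements while keeping the underlying $\Z_2$-graded vector space and the action of the even part on the odd part unchanged, the only brackets that get modified are those between two elements of $\oT'(\g)_{\bar 1}$. So the key bookkeeping step is to identify, for each pair appearing in the claimed formulas, whether both arguments are odd in $\oT'(\g)$ (in which case $\gr'$ sets the bracket to zero) or not (in which case the bracket equals the one inherited from $\oT'(\g)$).

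I would then run through the four claims:
\begin{itemize}
\item For $[\g_{\bar 1},\g_{\bar 1}]$ and $[\g_{\bar 1},\dd(\g_{\bar 0})]$, both arguments lie in $\oT'(\g)_{\bar 1}$, so $\gr'$ forces the result to be zero.
\item For $[\dd(\g),\dd(\g)]$ the claim in $\oT'(\g)$ was already that this bracket is zero, and this property is obviously preserved by $\gr'$.
\item For $[Y_1,\dd Y_2]$ with $Y_1,Y_2 \in \g_{\bar 1}$, the element $Y_1$ is odd in $\oT'(\g)$ while $\dd Y_2\in \dd(\g_{\bar 1})$ is even, so this is a mixed bracket and is preserved by $\gr'$. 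Applying the defining formula $[X,\dd Y] = (-1)^{|X|}\dd([X,Y])$ with $|X|=\bar 1$ gives $[Y_1,\dd Y_2] = -\dd([Y_1,Y_2])$, matching the claim.
\end{itemize}

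There is no real obstacle here; the whole statement is a routine verification once the parity assignments on $\oT'(\g)$ and the effect of $\gr'$ on brackets are made explicit. The only point requiring care is the sign in the last identity, which comes purely from the rule $(-1)^{|X|}$ in the definition of the action of $\g$ on $\dd(\g)$.
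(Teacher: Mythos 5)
Your verification is correct and is exactly the routine unfolding of definitions that the paper intends: the lemma is stated there without proof, as an immediate computation from the definitions of $\oT'$ and $\gr'$. Your parity bookkeeping (in particular that $\dd(\g_{\bar 1})$ is even, so $[Y_1,\dd Y_2]$ is a mixed bracket surviving $\gr'$ with the sign $(-1)^{|Y_1|}=-1$) is precisely the point that needs checking, and you have it right.
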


\begin{remark} {\newMR Note that the Lie algebra $\gr \oT (\g)$ keeps all the information on the bracket on $\g$. }
    We can see the Lie superalgebra $\mathfrak a=\mathrm{gr}' (\oT'(\mathfrak g))$ as a $\Z\times\Z$-graded Lie superalgebra of type $\{0,\alpha,\beta, \alpha+\beta\}$, where $\alpha$ is odd and $\beta$ is even. Indeed, we put
    $$
    \mathfrak a_0:= \mathfrak g_{\bar 0}, \quad \mathfrak a_{\alpha}:= \mathfrak g_{\bar 1}, \quad\mathfrak a_{\beta}:= \dd\mathfrak g_{\bar 1}, \quad\mathfrak a_{\alpha+\beta}:= \dd\mathfrak g_{\bar 0}. 
    $$
    Lemma \ref{lem mult in gr T(g)} implies that the multiplication in $\mathfrak a$ is $\Z\times\Z$-graded. 
\end{remark}

{\it Functor parity change $\pi'$}. In previous sections we reminded how to define the functor $\pi$ for double vector bundles. In this section we show that on the parity reversed double vector bundle $\pi \circ \gr\circ \oT(\mcG)$, where $\mcG$ is a Lie supergroup, a Lie supergroup structure can be defined. We start with the case of Lie superalgebras. 

We define the functor $\pi'$ on the image of the composition of functors $\mathrm{gr}' \circ \oT'$ as follows. The  Lie superalgebra $\mathrm{gr}' (\oT'(\mathfrak g))$ possesses a parity reversion of the weight $\beta$. 
Indeed, by definition the Lie superalgebra $\pi'(\mathrm{gr}' (\oT'(\mathfrak g)))$ is equal to $\mathrm{gr}' (\oT'(\mathfrak g))$ as $\mathfrak g_{\bar 0}$-modules, but now we assume that elements $X$ and $\mathrm{d}(X)$ have the same parities. In other words we assume that $\mathrm{d}$ is even. More precisely, let $\mathfrak a = \mathrm{gr}' (\oT'(\mathfrak g))$. Denote by $\mathfrak h$ the vector superspace $\pi'(\mathrm{gr}' (\oT'(\mathfrak g)))$. We have 
$$
\mathfrak h = \bigoplus_{\delta\in \Delta} \mathfrak h_{\delta},
$$
where $\Delta = \{0,\alpha,\beta, \alpha+\beta\}$ with $|\alpha|=|\beta|=\bar 1$. Let us define a Lie superalgebra structure on $\mathfrak h$ of type $\Delta$. We denote by $[\,,]_{\mathfrak a}$ the Lie bracket on $\mathfrak a$ and by $[\,,]_{\mathfrak h}$ the Lie bracket on $\mathfrak h$.

\begin{proposition}\label{prop pi gr T (g)Lie superalgebra}
The Lie superalgebra structure on $\mathfrak h$ is defines by the following data. 
 \begin{itemize}
            \item We set $[X, Y]_{\h} = [X, Y]_{\mathfrak a}$ for all homogeneous $X,Y$, except for the case
     $X\in \h_{\alpha}$ and $Y\in \h_{\beta}$.
     
  \item For  $X\in \h_{\alpha}$ and $Y\in \h_{\beta}$ we set $[X, Y]_{\h} := [X, Y]_{\aaa} = - [Y, X]_{\aaa} = :[Y, X]_{\h}$.
     \end{itemize}
  \end{proposition}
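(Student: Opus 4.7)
The plan is to show that the prescribed bracket on $\h$ satisfies graded skew-symmetry and the graded Jacobi identity with respect to the new parity $|\alpha|_\h = |\beta|_\h = \bar 1$, $|\alpha+\beta|_\h = \bar 0$. The $\Z^2$-grading is inherited from $\aaa$, so only these two axioms need to be checked, and I would do so by a case analysis on the weights involved.

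For skew-symmetry, the Koszul signs $(-1)^{|X|_\h|Y|_\h}$ and $(-1)^{|X|_\aaa|Y|_\aaa}$ differ exactly on the unordered pairs of weights $\{\alpha,\beta\}$, $\{\alpha,\alpha+\beta\}$, $\{\beta,\beta\}$ and $\{\alpha+\beta,\alpha+\beta\}$. In the last three cases the weight sum $\gamma_1+\gamma_2$ lies outside $\Delta$, so the corresponding bracket in $\aaa$ is automatically zero and skew-symmetry in $\h$ is vacuous. The remaining pair $\{\alpha,\beta\}$ is precisely the case of the second bullet: both entries are now odd, and combining $[X,Y]_\h = [X,Y]_\aaa$ with $[Y,X]_\h = -[Y,X]_\aaa$ together with skew-symmetry in $\aaa$ yields $[X,Y]_\h = [Y,X]_\h$, as the new parity requires.

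For Jacobi, consider a triple of weights $(\gamma_1,\gamma_2,\gamma_3)\in\Delta^3$. If all $\gamma_i$ are non-zero then $\gamma_1+\gamma_2+\gamma_3$ exceeds $\alpha+\beta$ and every term in Jacobi vanishes trivially. Otherwise some $\gamma_i = 0$, and only two non-trivial configurations remain: $(0,0,\gamma)$, in which every bracket is of the form $[\h_0,-]_\h = [\aaa_0,-]_\aaa$ and the identity reduces word for word to Jacobi in $\aaa$; and permutations of $(0,\alpha,\beta)$. For the canonical ordering $A\in\h_0$, $B\in\h_\alpha$, $C\in\h_\beta$, every bracket agrees with its $\aaa$-counterpart and the sign $(-1)^{|A|_\h|B|_\h} = 1$ matches $(-1)^{|A|_\aaa|B|_\aaa}$, so the identity is literally Jacobi in $\aaa$. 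In the other permutations each use of the reversed bracket $[\h_\beta,\h_\alpha]_\h = -[\aaa_\beta,\aaa_\alpha]_\aaa$ introduces an extra $-1$, which is matched in each Jacobi term by the sign discrepancy between $(-1)^{|\cdot|_\h|\cdot|_\h}$ and $(-1)^{|\cdot|_\aaa|\cdot|_\aaa}$; after cancellation the identity again reduces to Jacobi in $\aaa$. This sign bookkeeping for the non-canonical permutations of $(0,\alpha,\beta)$ is the only non-routine step and the main obstacle; once verified, the rest of the proof is purely mechanical.
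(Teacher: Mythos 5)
Your proof is correct, but it is organized differently from the paper's. The paper verifies the claim through the structural recipe recalled at the start of Section 5.1: it shows that $\h_{\bar 0}=\h_0\oplus\h_{\alpha+\beta}$ is a Lie algebra (a semidirect product of $\aaa_0$ with the $\aaa_0$-module $\aaa_{\alpha+\beta}$), that $\h_{\bar 1}$ is an $\h_{\bar 0}$-module on which $\h_{\alpha+\beta}$ acts trivially, and that the induced symmetric pairing $\h_{\bar 1}\otimes\h_{\bar 1}\to\h_{\bar 0}$ is an $\h_{\bar 0}$-module map, with the cubic condition holding because any triple bracket of elements of $\h_\alpha\oplus\h_\beta$ vanishes for weight reasons. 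You instead check graded skew-symmetry and the graded Jacobi identity directly, decomposing by weight triples rather than by parity. The two arguments rest on the same two observations --- that every bracket other than $[\h_0,\cdot]$ and $[\h_\alpha,\h_\beta]$ is killed by the $\Z^2$-grading, and that the Koszul-sign discrepancy between $\aaa$ and $\h$ occurs only on pairs whose bracket vanishes, except for $\{\alpha,\beta\}$ where the sign flip in the definition absorbs it. What your route buys is that the sign bookkeeping (in particular for the permutations of $(0,\alpha,\beta)$, which is where the paper's terse assertion that the pairing is an $\h_{\bar 0}$-module map hides the only real computation) is made fully explicit; what the paper's route buys is brevity, since the ``Lie algebra + module + symmetric pairing'' packaging disposes of most orderings automatically. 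Your list of the weight pairs on which the two Koszul signs disagree, and your reduction of Jacobi to the configurations $(0,0,\gamma)$ and permutations of $(0,\alpha,\beta)$, are both accurate.
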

\begin{proof}
  {\it Step 1.}  Clearly $\h_{\bar{0}} = \h_{0}\oplus \h_{\alpha+\beta}$ is a Lie algebra, namely it is a semidirect product of the Lie algebra $\aaa_{0}$ and  $\aaa_{0}$-module $\aaa_{\alpha+\beta}$.

     {\it Step 2.} $\h_{\bar{1}}$ is a $\h_{\bar{0}}$-module. Clearly, $\h_{\bar{1}}$ is a $\h_{0}$-module and $\h_{\alpha+\beta}$ acts trivially on $\h_{\bar{1}}$.

   {\it Step 3.} Note that the map $\h_{\bar{1}} \otimes \h_{\bar{1}}\to \h_{\bar{0}}$ induced by $[\,,]_{\h}$ is an $\h_{\bar{0}}$-module map. It remains only to check Jacobi identity for homogeneous $X,Y,Z\in \h_{\bar{1}} = \h_{\alpha}\oplus \h_{\beta}$. However the product of any three elements of this form is $0$. 
\end{proof}

Later we will give another proof of this result. It is easy to check that $\pi'$ is defined on the morphisms of the form $\gr'\circ\oT'(\phi)$, where $\phi:\mathfrak g\to \mathfrak g_1$ is a morphism of Lie superalgebras.

{\it Functor inverse $\iota'$}. Above we explained the meaning of the functor  inverse $\iota$ for supermanifolds. Now we show that an analogue of this functor can be defined in the category of Lie superalgebras. Let $\mathfrak g$ be a Lie superalgebra.  Denote by $\p:=\iota' \circ\pi' \circ \mathrm{gr}' \circ \oT'(\mathfrak g)$ a $\mathbb Z$-graded subsuperspace of  $\mathfrak h:=\pi'(\mathrm{gr}' (\oT'(\mathfrak g)))$ with support $\{0,1,2\}$, where $|1|=\bar 1$, given by
\begin{align*}
\p_0:= \mathfrak h_{\bar 0}, \quad \p_1= \{ Y+\mathrm{d}Y\,\,|\,\, Y\in \mathfrak g_{\bar 1}\}\subset \mathfrak h_{\alpha}\oplus\mathfrak h_{\beta}, \quad \p_2:= \mathfrak h_{\alpha+\beta}. 
\end{align*}

\begin{proposition}\label{prop tilde g is Z graded}
	The superspace $\p$ is a $\mathbb Z$-graded Lie superalgebra with support $\{0,1,2\}$ and with $|1|=\bar 1$.
\end{proposition}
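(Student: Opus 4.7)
The plan is to realize $\p$ as a $\Z$-graded subalgebra of the Lie superalgebra $\h = \pi'(\gr'(\oT'(\g)))$, so that the Lie superalgebra axioms are inherited for free from $\h$ (Proposition \ref{prop pi gr T (g)Lie superalgebra}). Concretely, I would check two things: that the $\Z_2$-parity induced on each $\p_i$ by the ambient $\h$ agrees with $|i| \bmod 2$, and that $[\p_i, \p_j]_\h \subset \p_{i+j}$ for all $i,j$. The parity check is immediate: after $\pi'$ one has $|\alpha| = |\beta| = \bar 1$ and $|\alpha + \beta| = \bar 0$, so $\p_0 \subset \h_0$ and $\p_2 = \h_{\alpha+\beta}$ are even while $\p_1 \subset \h_\alpha \oplus \h_\beta$ is odd, consistent with $|1| = \bar 1$.

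The closure check splits into cases. The ones involving $\p_0$ are routine: $[\p_0, \p_0]_\h$ is the $\g_{\bar 0}$-bracket; $[X, Y + \dd Y]_\h = [X, Y]_\g + \dd[X,Y]_\g \in \p_1$, using the Takiff identity $[X, \dd Y]_\aaa = (-1)^{|X|} \dd[X, Y]_\g$ for even $X$, which is unchanged under $\pi'$; and $[X, \dd Z]_\h = \dd[X, Z]_\g \in \p_2$. The brackets $[\p_1, \p_2]_\h$ and $[\p_2, \p_2]_\h$ must vanish (they would otherwise land in $\p_3 = 0$), and this follows from Lemma \ref{lem mult in gr T(g)}, which gives $[\g_{\bar 1}, \dd \g_{\bar 0}]_\aaa = 0$ and $[\dd\g, \dd\g]_\aaa = 0$; Proposition \ref{prop pi gr T (g)Lie superalgebra} modifies the $\aaa$-bracket only on $\h_\alpha \otimes \h_\beta$, so these zeros persist in $\h$.

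The substantive case is $[\p_1, \p_1]_\h \subset \p_2$. Expanding
\[
  [Y_1 + \dd Y_1,\, Y_2 + \dd Y_2]_\h = [Y_1, Y_2]_\h + [Y_1, \dd Y_2]_\h + [\dd Y_1, Y_2]_\h + [\dd Y_1, \dd Y_2]_\h,
\]
the first and fourth terms vanish by Lemma \ref{lem mult in gr T(g)}, and the two cross terms survive. Using the rule of Proposition \ref{prop pi gr T (g)Lie superalgebra} that the $\h$-bracket on $\h_\alpha \otimes \h_\beta$ coincides with the $\aaa$-bracket, together with the Lemma's identity $[Y_1, \dd Y_2]_\aaa = -\dd[Y_1, Y_2]_\g$ and the symmetry $[Y_1, Y_2]_\g = [Y_2, Y_1]_\g$ on two odd elements of $\g$, both cross terms evaluate to $-\dd[Y_1, Y_2]_\g$, so their sum lies in $\dd \g_{\bar 0} = \p_2$, as required.

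The only real pitfall is the sign bookkeeping for the $\h_\alpha$--$\h_\beta$ cross-brackets after the parity flip: one must recognise that $\dd Y$ (for $Y \in \g_{\bar 1}$) becomes \emph{odd} in $\h$ under $\pi'$, so that $Y + \dd Y$ is a genuine odd element of $\h$ and the two surviving cross terms in the $[\p_1, \p_1]_\h$ computation reinforce rather than cancel. With this verified, $\p$ is a graded subalgebra of $\h$, yielding the desired $\Z$-graded Lie superalgebra structure with support $\{0, 1, 2\}$ and $|1| = \bar 1$.
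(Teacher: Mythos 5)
Your proof is correct and follows the same route as the paper: exhibit $\p$ as a graded subalgebra of $\h=\pi'(\gr'(\oT'(\g)))$ by direct case-checking, with the only substantive case being $[\p_1,\p_1]\subset\p_2$, which you compute exactly as the paper does (the paper's proof consists of just this one case plus the remark that the rest is a direct calculation). Your version is in fact slightly more careful — you verify the remaining brackets explicitly and track the sign in the cross terms $[Y_1,\dd Y_2]=-\dd[Y_1,Y_2]$ correctly, where the paper writes $+\dd[Y_1,Y_2]$; either way the essential point, that the two cross terms reinforce rather than cancel after $\pi'$, is the same.
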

\begin{proof}
	The proposition follows from a direct calculation. For example let us show that $[\p_1,\p_1] \subset \p_2$.  For $Y_1,Y_2\in\mathfrak g_1$ we have
	\begin{align*}
	[Y_1+ \mathrm{d}Y_1, Y_2+ \mathrm{d}Y_2] =  &[Y_1,Y_2] + [\mathrm{d}Y_1,Y_2] + [Y_1,\mathrm{d}Y_2] + [\mathrm{d}Y_1,\mathrm{d}Y_2] = \\
	& d[Y_1,Y_2] + d[Y_1,Y_2] = 2d[Y_1,Y_2]\in \p_2.
	\end{align*}
\end{proof}
Note that the functor $\iota'$ is defined only for Lie superalgebras of the form $\pi' \circ \mathrm{gr}' \circ \oT'(\mathfrak g)$. Further, if $f:\mathfrak g\to \mathfrak g_1$ is a morphism of Lie superalgebras, then the morphism $\pi' \circ \mathrm{gr}' \circ \oT' (f)$ can be restricted to the subalgebras $\iota' \circ\pi' \circ \mathrm{gr}' \circ \oT'(\mathfrak g) \to \iota' \circ\pi' \circ \mathrm{gr}' \circ \oT'(\mathfrak g_1)$.
Summing up, we constructed the following functor
$$
\mathrm F'_2: = \iota' \circ\pi'\circ \mathrm{gr}'\circ \oT':\SLie \to \GrLie_2.
$$
This functor is an embedding of category $\SLie$ into $\GrLie_2$.

\begin{remark}\label{rem parity change}
{\bf (1)}	It is necessary to apply the functor $\pi'$ before the functor inverse. Indeed, if $Y$ and $\mathrm{d}Y$, where $Y\in \p_1$, have different parities, we will get
	\begin{equation}\label{eq pi is nessesary}
	\begin{split}
		[Y_1+\mathrm{d}Y_1, Y_2+\mathrm{d}Y_2] = &[Y_1,Y_2] + [\mathrm{d}Y_1,Y_2] + [Y_1,\mathrm{d}Y_2] +  [\mathrm{d}Y_1,\mathrm{d}Y_2] = \\
		&\mathrm{d}[Y_1,Y_2] - \mathrm{d}[Y_1,Y_2] = 0.
	\end{split}
	\end{equation}

{\bf (2)} Another observation is the following. Lie superalgebras of the form $\mathrm{gr}'(\mathfrak k)$, where $\mathfrak k$ is a Lie superalgebra possesses another parity reversion
$$
\mathfrak k_{\bar 0}\oplus \mathfrak k_{\bar 1} \longmapsto \mathfrak k_{\bar 0}\oplus \mathfrak k_{\bar 1}[\bar 1],
$$
where $[\bar 1]$ is the shift of parity be $\bar 1$. In other words we assume that all odd elements are even. Clearly $\mathfrak k_{\bar 0}\oplus \mathfrak k_{\bar 1}[\bar 1]$ is a Lie algebra (not superalgebra). In this case the same argument, see (\ref{eq pi is nessesary}), shows that the functor inverse loses  information about original Lie superalgebra.
\end{remark}

\subsection{Donagi and Witten construction for Lie supergroups}
In this section we develop the construction of Section \ref{sec Witten} for Lie supergroups. In details, we apply the functor  $\mathrm F_2$ to a Lie supergroup $\mathcal G$. Again $\mathrm F_2$ is a composition of four functors: $\oT$ ({\it tangent}), $\mathrm{gr}$ ({\it split}), $\pi$ ({\it parity change}) and $\iota$ ({\it inverse}).

{\it Tangent functor $\oT$.} Let $\mathcal G$ be a Lie supergroup with multiplication $\mu$, inversion $\kappa$ and identity $e$. Clearly, $\oT(\mathcal G)$  is a Lie supergroup {\newMR as the functor $\oT$ preserves products}. Indeed, since $\oT$ is a functor the morthisms $\oT(\mu)$, $\oT(\kappa)$ and $\oT(e)$ satisfy the Lie supergroup axioms.

\begin{proposition}\label{prop T(g)=Lie T(G)}
	The Lie superalgebra of the Lie supergroup $\oT(\mathcal G)$
 is equal to $\oT' (\mathfrak g)$, where $\mathfrak g = \mathrm{Lie}(\mathcal G)$.
\end{proposition}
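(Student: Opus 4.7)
The plan is to exploit three features of the antitangent functor: it preserves finite products (hence sends Lie supergroups to Lie supergroups), the zero section $s\colon \mathcal G\to \oT\mathcal G$ and the bundle projection $p\colon \oT\mathcal G\to \mathcal G$ are both Lie supergroup morphisms satisfying $p\circ s=\mathrm{id}_{\mathcal G}$, and $\oT$ admits a ``super Weil algebra'' (Takiff) description on the functor of points, $\oT\mathcal G(B)=\mathcal G(B\otimes_{\mathbb K}\mathbb K[\tau])$ with $\tau$ an odd parameter satisfying $\tau^2=0$.

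First I would use $s$ and $p$ to pin down the underlying vector superspace. Since $p$ is a Lie supergroup morphism whose kernel is the fiber $\oT_e\mathcal G$, and this fiber is a vector superspace (hence an abelian Lie sub-supergroup), its Lie superalgebra coincides with the vector superspace $\oT_e\mathcal G$, which by the definition of $\oT$ equals $\mathrm{d}\mathfrak g$ (the parity reversion of $\mathfrak g$). The section $s$ provides a complementary copy of $\mathfrak g$, yielding the vector superspace identification $\mathrm{Lie}(\oT\mathcal G)=\mathfrak g\oplus \mathrm{d}\mathfrak g$.

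Next I would determine the Lie bracket. Functoriality of $s$ implies that $\mathrm{Lie}(s)\colon \mathfrak g\hookrightarrow \mathrm{Lie}(\oT\mathcal G)$ is a Lie superalgebra morphism, so the bracket on $\mathfrak g$ is the original one. The fact that $\oT_e\mathcal G$ is an abelian Lie sub-supergroup forces $[\mathrm{d}\mathfrak g,\mathrm{d}\mathfrak g]=0$. For the mixed bracket $[X,\mathrm{d}Y]$ I would invoke the Takiff interpretation: the isomorphism $\oT\mathcal G(B)=\mathcal G(B\otimes \mathbb K[\tau])$ induces an isomorphism of Lie superalgebras $\mathrm{Lie}(\oT\mathcal G)\cong \mathbb K[\tau]\otimes_{\mathbb K}\mathfrak g$ with $X\leftrightarrow 1\otimes X$ and $\mathrm{d}X\leftrightarrow \tau\otimes X$. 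The super tensor product bracket
$$[a\otimes X,\,b\otimes Y]=(-1)^{|b|\,|X|}(ab)\otimes[X,Y]$$
then yields
$$[1\otimes X,\,\tau\otimes Y]=(-1)^{|X|}\,\tau\otimes [X,Y]=(-1)^{|X|}\,\mathrm{d}[X,Y],$$
which matches the defining relation of $\oT'\mathfrak g$ from Section \ref{sec DW for superalgebras}.

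The main obstacle is justifying rigorously the super Weil algebra identification, i.e.\ showing that $B\mapsto \mathcal G(B\otimes \mathbb K[\tau])$ is indeed represented by $\oT\mathcal G$ with its induced Lie supergroup structure; once this is granted the sign in the mixed bracket is forced by the Koszul rule. A more pedestrian alternative is to compute right-invariant vector fields on $\oT\mathcal G$ directly from $(\oT\mu)^\ast$ using the explicit local formula for $\oT$ on morphisms, then verify the same signs by hand; functoriality of $\oT$ guarantees the result is coordinate-free.
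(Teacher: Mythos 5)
Your proposal is correct, but it takes a genuinely different route from the paper. The paper's proof is a direct computation: it recalls the curve-based definition of the Lie bracket from \cite{BLMS}, Taylor-expands the multiplication $m((x^A),(\und{x}^A)) = x^A + \und{x}^A + \tfrac12 Q^A_{BC}x^C\und{x}^B \bmod I$ in a chart around $e$, applies $\oT$ to get the explicit group law on the coordinates $(x^A,\dot x^A)$, and reads off all three brackets $[\pa_{x^C},\pa_{x^B}]$, $[\pa_{\dot x^C},\pa_{x^B}]$, $[\pa_{\dot x^A},\pa_{\dot x^B}]$ from the group commutator --- essentially your ``pedestrian alternative.'' Your structural argument (zero section and projection as supergroup morphisms, the fiber $\oT_e\mcG$ as an abelian normal sub-supergroup) cleanly pins down the underlying superspace, the bracket on $\g$, and $[\dd\g,\dd\g]=0$ without any coordinates, and the Weil-algebra identification $\oT\mcG(B)=\mcG(B\otimes\K[\tau])$ then forces the mixed bracket with the correct Koszul sign; this is consistent with the paper's own later observation that $\oT'(\g)\simeq\bigwedge(\tau)\otimes\g$ is a Takiff superalgebra. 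What your approach buys is conceptual transparency and independence of coordinates; what it costs is precisely the point you flag: one must justify that $B\mapsto\mcG(B\otimes\K[\tau])$ is represented by $\oT\mcG$ with its induced group structure and that its Lie superalgebra is $\K[\tau]\otimes\g$ with the graded tensor-product bracket (\ref{e:g_times_A}). Since that representability statement is standard but not proved in the paper, the coordinate computation the paper performs (and which you correctly identify as the fallback) is what actually discharges the sign $(-1)^{|X|}$ in $[X,\dd Y]=(-1)^{|X|}\dd[X,Y]$; your write-up would be complete once either that computation or a citation for the inner-Hom description of $\oT$ is supplied.
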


\begin{proof}
	 We shall follow the definition of the Lie functor of a Lie supergroup given in \cite{BLMS}. 
	 Let $X_1, X_2\in \T_e \mcG$ be homogeneous. Any homogeneous vector $X\in \T_e\mcG$ can be represented by a curve $g:\mathcal{R} \to \mcG$ where $\mathcal{R}$ is $\R^{1|0}$ or $\R^{0|1}$ depending on the parity of $X$, so that
$$
    g^\ast(f) = f(e) + t X(f) \bmod I_t^2, 
$$ 
for any $f\in \mcO_{\mcG}(U)$ defined in a neighbourhood $U\subset \mcG_0$ of $e$,
where $t$ is the distinguished coordinate on $\mathcal{R}$ of parity $\bar{0}$ or $\bar{1}$, and $I_t=\langle t\rangle$ is the ideal generated by $t$.
Say  $X_1, X_2\in \T_e\mcG$
are represented by curves $g_i: \mathcal{R}_i \to \mcG$, where $i=1, 2$ and $s, t$ be the distinguished coordinates on $\mathcal{\R}_1$, $\mathcal{\R}_2$, respectively. Then there exist $Z\in \T_e \mcG$ such that  for
$
    \Phi := g_1 g_2 (g_1)^{-1} (g_2)^{-1}: \mathcal{R}_1 \times \mathcal{R}_2 \to \mcG
$
we have
$$
    \Phi^\ast(f) = f(e) + (-1)^{|X| |Y|} ts Z(f) \,\bmod{I_t^2+I_s^2}.
$$
The bracket $[X_1, X_2]$ is  defined as $Z$.

We are ready to describe the Lie superalgebra $\Lie (\oT G)$. Let $(U, x^A)$ be a chart around $e\in U \subset \mcG_0$ with local coordinates $(x^A)$. Without loss of generality we may assume that $x^A(e)=0$. The group law assures that the Taylor expansion of the multiplication map
$m: \mcG\times \mcG\to \mcG$ has the form
$$
    m((x^A), (\und{x}^A)) = (x^A + \und{x}^A + \frac12 Q^A_{BC} x^C \und{x}^B \bmod I),
$$
where $I = \langle x^2, \und{x}^2\rangle$. Moreover, $Q^A_{BC} = - (-1)^{\tilde{B}\tilde{C}} Q^A_{BC}$. The inverse $(x^A)^{-1}$ is given by $(-x^A) (\bmod I)$. The multiplication on $\oT (\mcG)$ is given by $\oT (m)$, ie.
$$
    (x^A, \dot{x}^A) \cdot (\und{x}^A, \dot{\und{x}}^A) = \left(x^A + \und{x}^A + \frac12 Q^A_{BC} x^C \und{x}^B, \dot{x}^A + \dot{\und{x}}^A + \frac12 Q^A_{BC} (\dot{x}^C \und{x}^B + x^C \dot{\und{x}}^B)\right).
$$
The inverse of $(x^A, \dot{x}^A)$ is $(-x^A, -\dot{x}^A)$  modulo $I$, hence
\begin{align*}
    &(x^A, \dot{x}^A)\cdot (\und{x}^A, \und{\dot{x}}^A) \cdot (-x^A, -\dot{x}^A)\cdot (-\und{x}^A, -\und{\dot{x}}^A) \equiv \\
    &\left(x^A + \und{x}^A + \frac12 Q^A_{BC} x^C \und{x}^B, \dot{x}^A + \dot{\und{x}}^A + \frac12 Q^A_{BC} (\dot{x}^C \und{x}^B + x^C \dot{\und{x}}^B\right) \cdot \\
    &\cdot \left(-x^A - \und{x}^A + \frac12 Q^A_{BC} x^C \und{x}^B, -\dot{x}^A - \dot{\und{x}}^A + \frac12 Q^A_{BC} (\dot{x}^C \und{x}^B + x^C \dot{\und{x}}^B)\right) \\
    &\equiv (Q^A_{BC} x^C \und{x}^B, Q^A_{BC} (\dot{x}^C \und{x}^B + x^C \und{\dot{x}}^B))\, (\bmod \, I). 
\end{align*}
We read off from the last formula that $[\pa_{x^C}, \pa_{x^B}] = Q^A_{BC} \pa_{x^A}$, $[\pa_{\dot{x}^C}, \pa_{x^B}] = Q^A_{BC} \pa_{\dot{x}^A}$, and $[\pa_{\dot{x}^A}, \pa_{\dot{x}^B}] =  0$. This completes the proof.
\end{proof}

The following theorem was proved \cite{Vish_funk}.

\begin{theorem}\cite[Theorem 3]{Vish_funk}\label{theor split supergroup}
	Let $\mathcal G$ be a Lie supergroup corresponding to the super Harish-Chandra pair $(\mathcal G_0, \mathfrak g)$, where $\mathcal G_0$ is the underlying space of $\mathcal G$ and $\mathfrak g = \mathrm{Lie} (\mathcal G)$. Then  $\mathrm{gr} (\mathcal G)$ is a Lie supergroup corresponding to the following super Harish-Chandra pair $(\mathcal G_0, \mathfrak g')$, where $\mathfrak g'=\mathrm{gr}' (\mathfrak g)$.
\end{theorem}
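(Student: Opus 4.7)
The plan is to reduce the statement to an explicit computation of the bracket on $\Lie(\gr \mcG)$ via the Taylor expansion of the group law, after first disposing of the softer parts (functoriality, body, and the action in the Harish-Chandra pair). First I would observe that $\gr$ preserves finite products of supermanifolds (immediate from its local coordinate description, as the $\mcJ$-adic filtration is compatible with tensor products), so the morphisms $\gr(\mu)$, $\gr(\kappa)$, $\gr(\varepsilon)$ automatically equip $\gr \mcG$ with a Lie supergroup structure. Since $\gr$ does not affect the underlying manifold we have $(\gr \mcG)_0 = \mcG_0$, and the body group law is unchanged.

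The core step is to identify $\Lie(\gr \mcG)$ with $\gr'(\g)$ as a Lie superalgebra. Choose local coordinates $(x^A)=(x^a,\xi^\alpha)$ near $e \in \mcG_0$ with $x^A(e)=0$, and expand the multiplication as in the proof of Proposition \ref{prop T(g)=Lie T(G)}:
\[
 \mu^*(x^A) \;=\; x^A + \und{x}^A + \tfrac{1}{2}\, Q^A_{BC}\, x^C\, \und{x}^B \pmod{\text{cubic terms}}.
\]
Assigning $\mcJ$-degree $0$ to $x^a$ and $\mcJ$-degree $1$ to $\xi^\alpha$, the morphism $\gr(\mu)^*$ is obtained by keeping from each component of $\mu^*$ only those monomials whose $\mcJ$-degree matches the target. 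In particular, the coefficient $Q^a_{\alpha\beta}$ multiplying $\xi^\alpha\,\und{\xi}^\beta$ in $\mu^*(x^a)$ drops out of $\gr(\mu)^*(x^a)$, because it would contribute $\mcJ$-degree $2$ to a degree-$0$ coordinate. Re-running the commutator-mod-$I_t^2+I_s^2$ calculation of Proposition \ref{prop T(g)=Lie T(G)} with $\gr(\mu)$ in place of $\mu$ then produces precisely the brackets of $\gr'(\g)$: the purely even bracket and the mixed $\g_{\bar 0}$-action on $\g_{\bar 1}$ are unchanged, while $[\pa_{\xi^\alpha},\pa_{\xi^\beta}]=0$, exactly matching the definition of $\gr'$.

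Next I would check the Harish-Chandra compatibility. The conjugation $c_g\colon \mcG \to \mcG$ is a morphism of Lie supergroups for each $g\in\mcG_0$, so by functoriality $\gr(c_g)$ is the conjugation in $\gr \mcG$; differentiating at $e$ shows that the action of $\mcG_0$ on $\gr'(\g)$ coincides, as a map of underlying vector superspaces, with the action $\alpha$ from the Harish-Chandra pair of $\mcG$ (it automatically preserves the vanishing of odd-odd brackets because it is $\mcJ$-degree preserving). Its restriction to $\g_{\bar 0}$ remains the adjoint action of $\mcG_0$, and its differential at $e$ is still $\mathrm{ad}|_{\g_{\bar 0}}$, so all the axioms of a super Harish-Chandra pair for $(\mcG_0,\gr'(\g))$ are in place.

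The main obstacle I anticipate is the careful bookkeeping required to compare the $\mcJ$-adic filtration of $\mathcal O_{\mcG, e}$ with the $\mathfrak m_e$-adic filtration used to define the tangent superspace, so as to justify the canonical identification $(\mathfrak m_e/\mathfrak m_e^2)^* \cong (\mathfrak m^{\gr}_e/(\mathfrak m^{\gr}_e)^2)^*$ of underlying vector superspaces; once this identification is in hand, the rest of the bracket computation is essentially cosmetic, and Theorem \ref{theor HC pairs} then upgrades the equality of Harish-Chandra pairs to an isomorphism of Lie supergroups.
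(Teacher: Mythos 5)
The paper does not actually prove this statement: it is imported verbatim as \cite[Theorem 3]{Vish_funk}, so there is no internal proof to compare against. Judged on its own, your argument is correct and essentially complete, and it is pleasantly consistent with the methods the paper does use elsewhere: the product-preservation of $\gr$ gives the group structure on $\gr(\mcG)$, and your bracket computation is exactly the commutator-mod-$I_t^2+I_s^2$ calculation of Proposition \ref{prop T(g)=Lie T(G)} rerun for $\gr(\mu)$, where the parity/weight count correctly shows that only the coefficients $Q^a_{\alpha\beta}$ (encoding $[\g_{\bar 1},\g_{\bar 1}]$) are killed while the $[\g_{\bar 0},\g_{\bar 0}]$ and $[\g_{\bar 0},\g_{\bar 1}]$ terms survive --- precisely the definition of $\gr'$. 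The cited source instead works at the level of the Harish--Chandra model $\Hom_{\mathcal U(\g_{\bar 0})}(\mathcal U(\g),\mathcal F_G)$ and the explicit formulas (\ref{eq umnozh in supergroup}), identifying the $\mathcal J$-adic filtration with the filtration by degree in $\bigwedge\g_{\bar 1}^*$; your coordinate route is more elementary and buys independence from that machinery, at the price of the two bookkeeping points you yourself flag (that $\gr$ commutes with finite products because the ideal $\mathcal J_{\mcM\times\mcN}$ induces the convolution of the two filtrations, and the canonical identification $\mathfrak m_e/\mathfrak m_e^2\cong \mathfrak m_e^{\gr}/(\mathfrak m_e^{\gr})^2$, both of which are routine since $\mathcal J^2\subset\mathfrak m_e^2$). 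Your treatment of the action $\alpha$ via functoriality applied to $c_g$ closes the Harish--Chandra axioms, and Theorem \ref{theor HC pairs} then finishes the identification, so I see no genuine gap.
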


Proposition \ref{prop T(g)=Lie T(G)} and Theorem \ref{theor split supergroup} imply the following corollary.

\begin{corollary}
	Let $\mathcal G$ be a Lie supergroup, then
	$$
	\mathrm{gr}' \circ \oT'(\Lie \mcG) = \mathrm{Lie} ( \mathrm{gr} \circ \oT(\mathcal G)).
	$$
\end{corollary}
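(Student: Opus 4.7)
The plan is to chain together the two results quoted immediately above the corollary. Write $\mathcal H := \oT(\mathcal G)$; by Proposition \ref{prop T(g)=Lie T(G)} this is a Lie supergroup whose Lie superalgebra is $\Lie(\mathcal H) = \oT'(\Lie \mcG)$. Then apply Theorem \ref{theor split supergroup} to $\mathcal H$ in place of $\mathcal G$: the functor $\gr$ sends the Lie supergroup $\mathcal H$ to the Lie supergroup corresponding to the super Harish-Chandra pair $(\mathcal H_0, \gr'(\Lie \mathcal H))$, so that
\begin{equation*}
\Lie(\gr \mathcal H) \;=\; \gr'(\Lie \mathcal H).
\end{equation*}
Substituting $\mathcal H = \oT(\mcG)$ and using the previous identification gives
\begin{equation*}
\Lie(\gr \circ \oT(\mcG)) \;=\; \gr'(\Lie(\oT(\mcG))) \;=\; \gr'(\oT'(\Lie \mcG)),
\end{equation*}
which is the desired equality.

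The only step that requires a brief check is that Theorem \ref{theor split supergroup} is legitimately applicable to the auxiliary Lie supergroup $\oT(\mcG)$. This is immediate, however, because that theorem was formulated for an arbitrary Lie supergroup with no additional hypotheses, and $\oT(\mcG)$ is a Lie supergroup by the functoriality argument used in the proof of Proposition \ref{prop T(g)=Lie T(G)} (the functor $\oT$ preserves products and hence sends the structure morphisms $\mu, \kappa, \varepsilon$ of $\mcG$ to structure morphisms of $\oT(\mcG)$). So the corollary is simply a two-line composition of the preceding results; there is no nontrivial obstacle, and no further calculation is required beyond recording this substitution.
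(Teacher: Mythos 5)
Your proof is correct and is exactly the paper's argument: the paper states the corollary as an immediate consequence of Proposition \ref{prop T(g)=Lie T(G)} and Theorem \ref{theor split supergroup}, and your write-up simply makes the composition explicit, including the (harmless) observation that the theorem applies to the auxiliary Lie supergroup $\oT(\mcG)$. Nothing further is needed.
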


As a consequence we get that the Lie supergroup $\gr\circ \oT(\mathcal G)$ is  a graded Lie super\-group of type $\{ 0,\alpha, \beta, \alpha+ \beta \}$, where $|\alpha|=\bar 1$ and $|\beta|=\bar 0$, with the graded Harish-Chandra pair $(\mathcal G_0, \mathrm{gr}' \circ \oT'(\mathfrak g))$ of the same type $\{ 0,\alpha, \beta, \alpha+ \beta \}$. (Compare also with Proposition \ref{prop gr T is n-fold}.)

{\it Functor parity change $\pi$}. Let us describe the functor $\pi$ using graded Harish-Chandra pairs. Let $(\mathcal G_0, \mathrm{gr}'\circ \oT'(\mathfrak g))$ be the graded Harish-Chandra pair of type $\{ 0,\alpha, \beta, \alpha+ \beta \}$ of the Lie  supergroup $\mathrm{gr}\circ \oT(\mathcal G)$ as above. In this case $\mathrm{gr}\circ \oT(\mathcal G)$ is a group object in the category of double vector bundles.  As we have seen in Section \ref{sec Witten}, the double vector bundle $\mathrm{gr}\circ \oT(\mathcal G)$ possesses the following parity reversion: we change the parity of the weight $\beta$ from even to odd. We denote this new vector bundle by $\pi\circ \mathrm{gr}\circ \oT(\mathcal G)$. Further we can define a Lie supergroup structure on $\pi\circ \mathrm{gr}\circ \oT(\mathcal G)$ using the graded Harish-Chandra pair $(\mathcal G_0,\pi'\circ  \mathrm{gr}'\circ \oT(\mathfrak g))$ of type $\Delta=\{ 0,\alpha, \beta, \alpha+ \beta \}$, where $|\alpha|=|\beta|=\bar 1$. Now the Lie supergroup  $\pi\circ \mathrm{gr}\circ \oT'(\mathcal G)$ is defined.

Let us describe the Lie supergroup morphisms of $\pi\circ \mathrm{gr}\circ \oT'(\mathcal G)$ using the language of double vector bundles. If $\mu$, $\kappa$ and $e$ are group morphisms of the Lie supergroup $\mcG$, then   $\mathrm{gr}\circ \oT(\mathcal G)$ is a group object in the category of double vector bundles with the structure morphisms $\mathrm{gr}\circ \oT (\mu)$, $\mathrm{gr}\circ \oT (\kappa)$ and $\mathrm{gr}\circ \oT (e)$. Since the category of double vector bundles possesses the parity change: $\beta$ even to $\beta$ odd, we denote by  $\pi\circ \mathrm{gr}\circ \oT(\mathcal G)$ with $\pi\circ\mathrm{gr}\circ \oT (\mu)$, $\pi\circ\mathrm{gr}\circ \oT (\kappa)$ and $\pi\circ\mathrm{gr}\circ \oT (e)$ the result of this parity change. Formulas (\ref{eq umnozh in supergroup}) tells us that this definition coincides with the definition in terms of graded Harish-Chandra pairs.

{\it Functor inverse $\iota$}. In Proposition \ref{prop tilde g is Z graded} we saw that  the Lie superalgebra $\pi'\circ  \mathrm{gr}'\circ \oT'(\mathfrak g)$ possesses a $\mathbb Z$-graded Lie subsuperalgebra $\p = \iota' \circ \pi'\circ  \mathrm{gr}'\circ \oT'(\mathfrak g)$. We define by $\iota\circ\pi\circ \mathrm{gr}\circ \oT(\mathcal G)$
the corresponding Lie subsupergroup. More precisely, we define $\iota\circ\pi\circ \mathrm{gr}\circ \oT(\mathcal G)$ using the graded Harish-Chandra pairs $
 (\mathcal G_0, \iota' \circ \pi'\circ  \mathrm{gr}'\circ \oT'(\mathfrak g)).$

\section{Generalized Donagi--Witten construction for\\ Lie superalgebras}

In Section \ref{sec gen Don Witten} we constructed an injective functor $\mathrm F:=\F_{\infty}$ from the category of supermanifolds to the category of graded manifolds. In this section we show that this functor can be defined in the category of Lie supergroups.  We start with  Lie superalgebras. In more details we will construct a functor $\mathrm F'$ from the category of Lie superalgebras $\SLie$ to the category of non-negatively $\mathbb Z$-graded Lie algebras $\GrLie_{\infty}$.  Further we will use these results for the category of Lie supergroups. Again the functor $\mathrm F'$ is a composition of four functors: the iterated antitangent functor $\oT'^{\infty}$, the functor split $\mathrm{gr}'$, the functor parity change $\pi'$ and the functor inverse $\iota'$.

\subsection{Iterated tangent functor $\oT'^{\infty}$}
Let $\mathfrak g$ be a Lie superalgebra. Let us describe the superalgebra $\oT'^{\infty}(\mathfrak g)$, where $\oT'^{\infty}:= \oT'\circ \oT' \circ\cdots $ is the infinitely many times iterated antitangent functor. Let us consider first twice iterated tangent functor $\mathrm{T'}^2(\mathfrak g)$. By definition
$$
\mathrm{T'}^2(\mathfrak g)=  \oT'(\oT'(\mathfrak g)) = \oT'(\mathfrak g\oplus \mathrm{d}_1(\mathfrak g)) = \mathfrak g\oplus \mathrm{d}_1(\mathfrak g) \oplus \mathrm{d}_2(\mathfrak g\oplus \mathrm{d}_1(\mathfrak g)).
$$
We replaced $\mathrm{d}$ from Section \ref{sec DW for superalgebras} by $\mathrm{d}_1$ and $\mathrm{d}_2$ is the second de Rham differential. The multiplication in $\mathrm{T'\circ T'}(\mathfrak g)$ is defined in a natural way. Moreover we can easily verify the following lemma.

\begin{lemma}\label{lem mult in T^2(g)}
	To obtain the multiplication in $\mathrm{T'}^2(\mathfrak g)$ we can use the following rule:
	 $[\mathrm{d}_i X,\mathrm{d}_j Y]= (-1)^{|X|} \mathrm{d}_i\mathrm{d}_j([X,Y])$ for any $X,Y\in \mathrm{T'}^2(\mathfrak g)$.
\end{lemma}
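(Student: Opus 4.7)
The plan is to identify $\mathrm{T'}^2(\mathfrak g)$ with the tensor product Lie superalgebra $\mathfrak g \otimes_{\mathbb K} \Lambda(\tau_1, \tau_2)$, where $\tau_1, \tau_2$ are odd generators satisfying $\tau_i^2 = 0$ and $\tau_1\tau_2 = -\tau_2\tau_1$, and then to read off the bracket from the super Koszul rule. Under this identification the antitangent lifts $\mathrm{d}_1, \mathrm{d}_2$ correspond to left multiplication by $\tau_1, \tau_2$, respectively.

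First, following Remark \ref{rem weights of g}, I would establish the isomorphism of Lie superalgebras $\oT'(\mathfrak g) \cong \mathfrak g \otimes_{\mathbb K} \mathbb K[\tau]$, where the bracket on the right is given by the super Koszul rule
$$
[a \otimes X,\, b \otimes Y] = (-1)^{|X|\,|b|}\, ab \otimes [X, Y].
$$
Taking $a = 1$ and $b = \tau$ this specializes to $[X, \mathrm{d}Y] = (-1)^{|X|}\mathrm{d}([X, Y])$, which is exactly the defining relation of $\oT'$ from Section \ref{sec DW for superalgebras}, and taking $a = b = \tau$ gives $0$ because $\tau^2 = 0$, matching $[\mathrm{d}\mathfrak g, \mathrm{d}\mathfrak g] = 0$. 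Iterating this identification and using associativity of the super tensor product then yields
$$
\mathrm{T'}^2(\mathfrak g) \;\cong\; \mathfrak g \otimes_{\mathbb K} \mathbb K[\tau_1] \otimes_{\mathbb K} \mathbb K[\tau_2] \;\cong\; \mathfrak g \otimes_{\mathbb K} \Lambda(\tau_1, \tau_2),
$$
again as Lie superalgebras with the Koszul bracket.

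Second, with this identification in place the desired formula is an immediate consequence: taking $a = \tau_i$ and $b = \tau_j$ (both odd) in the Koszul rule yields
$$
[\mathrm{d}_i X,\, \mathrm{d}_j Y] = (-1)^{|X|\cdot 1}\, \tau_i\tau_j \otimes [X, Y] = (-1)^{|X|}\, \mathrm{d}_i\mathrm{d}_j([X, Y]),
$$
and the expected anticommutation $\mathrm{d}_i\mathrm{d}_j = -\mathrm{d}_j\mathrm{d}_i$, including $\mathrm{d}_i^2 = 0$, is automatic from the relations in $\Lambda(\tau_1, \tau_2)$.

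The main obstacle is the careful bookkeeping of signs when moving between the two pictures: the iterative definition of $\oT'$ uses signs involving only $|X|$, whereas the Koszul rule uses the combination $|X|\,|b|$, and the parity of differentiated elements shifts at each step. Once the base identification $\oT'(\mathfrak g) \cong \mathfrak g \otimes \mathbb K[\tau]$ is checked to respect the brackets on both sides, the formula for $\mathrm{T'}^2(\mathfrak g)$ reduces to base change and associativity of the super tensor product. As an alternative to the tensor-product approach, one could give a direct case-by-case verification of the four pieces $(i,j) \in \{1,2\}^2$ by unfolding $\mathrm{T'}^2 = \oT' \circ \oT'$ and applying skew-symmetry together with the single-step rule, at the cost of a few sign calculations of the same type carried out in Lemma \ref{lem mult in gr T(g)}.
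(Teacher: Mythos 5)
Your proof is correct. Note that the paper itself offers no argument for this lemma beyond the remark that ``we can easily verify'' it, so there is no written proof to compare against; the implied verification is a direct unfolding of $\oT'\circ\oT'$ with case-by-case sign checks. Your route is different and, in my view, cleaner: you first establish the Lie superalgebra isomorphism $\oT'(\g)\cong \g\otimes_{\K}\K[\tau]$ with the Koszul bracket \eqref{e:g_times_A} directly from the defining relations of $\oT'$, and then obtain $\mathrm{T'}^2(\g)\cong\g\otimes\Lambda(\tau_1,\tau_2)$ by iteration, so that the stated sign $(-1)^{|X|}$ and the anticommutation $\dd_i\dd_j=-\dd_j\dd_i$ both fall out of the single Koszul rule. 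One caution: the paper later (in the functor-of-points subsection) presents the isomorphism $\mathrm{T'}^k(\g)\cong\bigwedge(\xi_1,\ldots,\xi_k)\otimes\g$ as a \emph{consequence} of this lemma and of Lemma \ref{lemTinfty g}, so to avoid circularity you must establish the base case $\oT'(\g)\cong\g\otimes\K[\tau]$ independently --- which you do --- and you must actually carry out the associativity check that $\oT'(\g\otimes A)\cong\g\otimes(A\otimes\K[\tau])$ as Lie superalgebras (the sign $(-1)^{(|a|+|X|)|b|}(-1)^{|X||a'|}$ from two applications of the Koszul rule does match the sign $(-1)^{|X||a'b'|}$ for the combined rule, using supercommutativity of the Grassmann factors); you flag this bookkeeping as the main obstacle, and it does go through. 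What your approach buys is that the same identification immediately gives Lemma \ref{lemTinfty g} and Corollary \ref{cor T^2} for free, whereas the direct verification has to be redone for each pair of multi-indices.
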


\begin{corollary}\label{cor T^2}
Since the operators $\mathrm{d}_i$ assumed to be odd, we have	$$
\mathrm{d}_i \mathrm{d}_i ([X,Y]) = 0
$$ for any $X,Y\in \mathrm{T'}^2(\mathfrak g)$.
\end{corollary}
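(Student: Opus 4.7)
The plan is to exploit the explicit model of iterated antitangent functors via Grassmann variables, as foreshadowed in Remark \ref{rem weights of g}. There we observed that $\oT'(\mathfrak g) \cong \mathfrak g \otimes_{\mathbb K} \mathbb K[\tau]$ with a single odd generator $\tau$. Iterating twice, I would realize
\[
\oT'^{2}(\mathfrak g) \;\cong\; \mathfrak g \otimes_{\mathbb K} \Lambda[\tau_1,\tau_2],
\]
where $\Lambda[\tau_1,\tau_2]$ is the Grassmann algebra on two anticommuting odd generators, so that $\tau_1^2 = \tau_2^2 = 0$ and $\tau_1\tau_2 = -\tau_2\tau_1$. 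Under this identification the symbol $\mathrm d_i X$ corresponds to $X\otimes \tau_i$, and the Lie bracket becomes the natural one on $\mathfrak g \otimes \Lambda[\tau_1,\tau_2]$ (an odd version of the standard Takiff construction). This is exactly the model which makes Lemma \ref{lem mult in T^2(g)} transparent: the sign $(-1)^{|X|}$ is just the Koszul sign needed to move $\tau_i$ past $X$.

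In this model, the operator $\mathrm d_i$ is left multiplication by the odd element $\tau_i$. Since $\tau_i$ is odd, $\tau_i \cdot \tau_i = 0$ in $\Lambda[\tau_1,\tau_2]$, so $\mathrm d_i \circ \mathrm d_i = 0$ as an operator. Consequently, for any $X,Y \in \oT'^{2}(\mathfrak g)$ we obtain $\mathrm d_i \mathrm d_i([X,Y]) = 0$, which is the desired identity. A purely algebraic version of the same reasoning uses Lemma \ref{lem mult in T^2(g)} directly: setting $i=j$ in the formula yields
\[
[\mathrm d_i X, \mathrm d_i Y] \;=\; (-1)^{|X|}\,\mathrm d_i \mathrm d_i([X,Y]),
\]
while by the very definition of the antitangent functor $\oT'(\mathfrak h)=\mathfrak h\oplus \mathrm d(\mathfrak h)$ from Section \ref{sec DW for superalgebras} the bracket on $\mathrm d(\mathfrak h)$ vanishes. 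Applied to the inner step this gives the result for one of the indices, and the symmetry of $\oT'^{2}(\mathfrak g)$ under interchange of the two tangent directions then covers the other index.

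I do not expect a real obstacle here, beyond the minor bookkeeping issue of making precise what $\mathrm d_i\mathrm d_i(\cdot)$ means when the image does not literally sit inside $\oT'^{2}(\mathfrak g)$; the Grassmann model resolves this by embedding everything into $\mathfrak g \otimes \Lambda[\tau_1,\tau_2,\tau_3,\ldots]$, inside which the identity is a genuine equation of elements.
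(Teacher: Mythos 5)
Your proposal is correct and matches the paper's (implicit) justification exactly: the paper offers no separate proof of Corollary~\ref{cor T^2}, relying precisely on the fact that in the Takiff/Grassmann model of Remark~\ref{rem weights of g} (made explicit later as $\mathrm{T'}^{k}(\mathfrak g)\simeq \bigwedge(\xi_1,\ldots,\xi_k)\otimes\mathfrak g$) the operator $\mathrm d_i$ is multiplication by an odd generator, so $\mathrm d_i\circ\mathrm d_i=0$. Your secondary argument via $[\mathrm d(\mathfrak h),\mathrm d(\mathfrak h)]=0$ and the $i=j$ case of Lemma~\ref{lem mult in T^2(g)} is a valid, slightly more intrinsic variant, but it is not needed beyond the first observation.
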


To define the functor $\mathrm{T'}^{\infty}$  we use de Rham differentials $\mathrm{d}_1, \mathrm{d}_2, \ldots, \mathrm{d}_n, \ldots. $ Lemma \ref{lem mult in T^2(g)} and Corrolary \ref{cor T^2} imply the following lemma.

\begin{lemma}\label{lemTinfty g}
	The Lie superalgebra $\mathrm{T'}^{\infty}(\mathfrak g)$ is an infinite dimensional Lie super\-algebra with the underlying vector space
$$
\bigoplus_{p\geq 0}	\bigoplus_{i_1<\cdots <i_p} \mathrm{d}_{i_1}\cdots \mathrm{d}_{i_p}(\mathfrak g)
$$
and multiplication defined by the following formula
$$
[\mathrm{d}_i X,\mathrm{d}_j Y]= (-1)^{|X|} \mathrm{d}_i\mathrm{d}_j([X,Y])
$$
for any $X,Y\in \mathrm{T'}^{\infty}(\mathfrak g)$.
\end{lemma}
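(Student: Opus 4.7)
The plan is to proceed by induction on $n$ to describe $\oT'^n(\mathfrak g)$ explicitly, and then pass to the limit in the sense of Section \ref{sec inverse limit}. The key observation is the tensor-product description already pointed out in Remark \ref{rem weights of g}: one has $\oT'(\mathfrak g) \cong \mathfrak g \otimes_{\mathbb K}\mathbb K[\tau]$ with $\tau$ an odd indeterminate, under the identification $X \leftrightarrow X \otimes 1$ and $\mathrm{d}X \leftrightarrow X \otimes \tau$. The axioms of Section \ref{sec DW for superalgebras} for $\oT'$ correspond to the standard superalgebra bracket on a tensor product by a supercommutative algebra. I would therefore aim to prove, inductively, that $\oT'^n(\mathfrak g) \cong \mathfrak g \otimes_{\mathbb K}\mathbb K[\tau_1,\ldots,\tau_n]$ with $\tau_1,\ldots,\tau_n$ pairwise anticommuting odd generators.

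First, I would set up the induction. The cases $n=1$ and $n=2$ are covered by Section \ref{sec DW for superalgebras}, Lemma \ref{lem mult in T^2(g)}, and Corollary \ref{cor T^2}. For the inductive step, assume $\oT'^n(\mathfrak g) \cong \mathfrak g \otimes_{\mathbb K}\mathbb K[\tau_1,\ldots,\tau_n]$; applying $\oT'$ once more adjoins a fresh odd generator $\tau_{n+1}$, and the defining relations $[A,\mathrm{d}_{n+1}B] = (-1)^{|A|}\mathrm{d}_{n+1}[A,B]$ together with $[\mathrm{d}_{n+1}A,\mathrm{d}_{n+1}B] = 0$ for $A,B\in \oT'^n(\mathfrak g)$ translate precisely into the tensor-product bracket on $\mathfrak g \otimes_{\mathbb K}\mathbb K[\tau_1,\ldots,\tau_{n+1}]$.

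Next, I would pass to the limit. The natural inclusions $\oT'^n(\mathfrak g) \hookrightarrow \oT'^{n+1}(\mathfrak g)$ are Lie superalgebra morphisms, so by the construction of Section \ref{sec inverse limit} the union carries a well-defined Lie superalgebra structure, which I would identify with $\oT'^{\infty}(\mathfrak g) \cong \mathfrak g \otimes_{\mathbb K}\mathbb K[\tau_1,\tau_2,\ldots]$. Decomposing this supercommutative algebra into monomials $\tau_{i_1}\cdots\tau_{i_p}$ indexed by finite subsets $\{i_1<\cdots<i_p\}\subset\{1,2,\ldots\}$, and identifying $\mathrm{d}_{i_1}\cdots\mathrm{d}_{i_p}(X) \leftrightarrow X \otimes \tau_{i_1}\cdots\tau_{i_p}$, yields the direct-sum decomposition in the statement, and the bracket $[\mathrm{d}_iX,\mathrm{d}_jY] = (-1)^{|X|}\mathrm{d}_i\mathrm{d}_j([X,Y])$ is a direct specialization of the Koszul-sign tensor bracket $[X \otimes p, Y \otimes q] = (-1)^{|p|\,|Y|}[X,Y]\otimes pq$.

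The main obstacle I expect is sign bookkeeping. The formula in the lemma is well-defined as an element of $\oT'^{\infty}(\mathfrak g)$ only after applying the relations $\mathrm{d}_i\mathrm{d}_j = -\mathrm{d}_j\mathrm{d}_i$ and $\mathrm{d}_i^2 = 0$ (which follow from each $\mathrm{d}_k$ being odd, as in Corollary \ref{cor T^2}) to reorder the resulting monomial into the canonical form $\mathrm{d}_{k_1}\cdots\mathrm{d}_{k_q}(Z)$ with $k_1<\cdots<k_q$. When $X$ and $Y$ already carry several $\mathrm{d}_k$'s, additional signs appear as the new $\mathrm{d}_i,\mathrm{d}_j$ are moved past them, and these must be shown to match the signs produced by iterating the defining relations for $\oT'$. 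Organizing the computation through the supercommutative tensor product $\mathfrak g \otimes_{\mathbb K}\mathbb K[\tau_1,\tau_2,\ldots]$, so that the Koszul sign rule handles all bookkeeping uniformly, reduces these checks to the standard sign calculus for tensor products of Lie superalgebras with supercommutative algebras.
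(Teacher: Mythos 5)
Your overall route is the same one the paper takes: the paper gives no argument for this lemma beyond the remark that it follows by iterating Lemma~\ref{lem mult in T^2(g)} and Corollary~\ref{cor T^2}, and in the very next subsection it makes exactly your identification $\oT'^{\infty}(\mathfrak g)\cong\bigwedge(\xi_1,\xi_2,\ldots)\otimes\mathfrak g$. So the induction-plus-inverse-limit skeleton is sound and, if anything, more explicit than what the paper records.

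There is, however, a concrete sign problem in the step you rely on to finish, and it already fails in the base case of your induction. Under your identification $\mathrm{d}Y\leftrightarrow Y\otimes\tau$, the bracket you quote, $[X\otimes p,\,Y\otimes q]=(-1)^{|p|\,|Y|}[X,Y]\otimes pq$, gives $[X\otimes 1,\,Y\otimes\tau]=[X,Y]\otimes\tau$, i.e.\ $[X,\mathrm{d}Y]=\mathrm{d}([X,Y])$, whereas the defining relation of $\oT'$ is $[X,\mathrm{d}Y]=(-1)^{|X|}\mathrm{d}([X,Y])$; likewise it yields $[\mathrm{d}_iX,\mathrm{d}_jY]=(-1)^{|Y|}\mathrm{d}_i\mathrm{d}_j([X,Y])$ rather than $(-1)^{|X|}\mathrm{d}_i\mathrm{d}_j([X,Y])$. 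The two conventions produce isomorphic Lie superalgebras (via $X\otimes a\mapsto(-1)^{|a|\,|X|}\,a\otimes X$), but they are not equal, so ``direct specialization'' is false as stated. The fix is to place the Grassmann factor on the left and use the sign $(-1)^{|X|\,|a'|}$, exactly as in the paper's formula (\ref{e:g_times_A}): then $[\tau_i\otimes X,\tau_j\otimes Y]=(-1)^{|X|}\tau_i\tau_j\otimes[X,Y]$, which is the bracket in the lemma, and one checks that iterating $\oT'$ produces precisely this sign, e.g.\ $[\mathrm{d}_1X,\mathrm{d}_2Y]=(-1)^{|\mathrm{d}_1X|}\mathrm{d}_2([\mathrm{d}_1X,Y])=(-1)^{|X|+1}\mathrm{d}_2\mathrm{d}_1([X,Y])=(-1)^{|X|}\mathrm{d}_1\mathrm{d}_2([X,Y])$, using $[\mathrm{d}_1X,Y]=\mathrm{d}_1([X,Y])$ and $\mathrm{d}_2\mathrm{d}_1=-\mathrm{d}_1\mathrm{d}_2$. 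With that correction your argument goes through.
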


\subsection{A connection with the functor of points for Lie superalgebras}

For details about the functor of points for Lie superalgebras we refer for example to \cite[Section 2.2.4]{Gav}. Let us recall this construction. To any Lie superalgebra $\mathfrak g$ we can associate a functor $L_{\mathfrak g}$ from the category of supercommutative algebras to the category of Lie algebras. It is defined as follows
$$
L_{\mathfrak g}(A)  = (A\otimes \g)_{\bar 0},
$$
where $A$ is a super-commutative algebra. The product $A\otimes \g$ is a Lie superalgebra with the following multiplication
\begin{equation}\label{e:g_times_A}
[a\otimes X, a'\otimes X']:= (-1)^{|X| |a'|} aa'\otimes [X,X'].
\end{equation}

Comparing with Lemma \ref{lemTinfty g} we see that the Lie superalgebra $\mathrm{T'}^k(\mathfrak g)$ is isomorphic to the Lie superalgebra $\bigwedge(\xi_1,\ldots, \xi_k)\otimes\mathfrak g$. Clearly, $\mathrm{T'}^{\infty}(\mathfrak g)$ is isomorphic to $\bigwedge(\xi_1,\xi_2,\ldots)\otimes \mathfrak g $, where $\bigwedge(\xi_1,\xi_2,\ldots)$ is the Grassmann algebra with infinitely many variables $\xi_1,\xi_2,\ldots$. Now we see that $$
L_{\mathfrak g}( \bigwedge(\xi_1,\xi_2,\ldots)) = \mathrm{T'}^{\infty}(\mathfrak g)_{\bar 0}.
$$

\subsection{Functors split $\mathrm{gr}'$ and parity change $\pi'$} The functor split is defined as above.

\begin{remark}\label{rem pi gr T(g) is Delra graded Lie superalgebra}
	The Lie superalgebra $\mathrm{gr}'\circ \mathrm{T'}^{\infty}(\mathfrak g)$ is a graded Lie superalgebra with support $\Delta$, where $\Delta$ is the maximal multiplicity free weight system generated by $\alpha, \beta_1,\ldots, \beta_k,\ldots$. Here  $\alpha$ is odd, while $\beta_1,\ldots, \beta_k,\ldots$ are even. The grading is defined as follows. 	We assign the weight $\alpha+ \beta_{i_i} +\cdots+ \beta_{i_p}$ to $\mathrm{d}_{i_1}\cdots \mathrm{d}_{i_p}(Z)$, if $\mathrm{d}_{i_1}\cdots \mathrm{d}_{i_p}(Z)$ is odd, and we assign the weight $\beta_{i_i} +\cdots+ \beta_{i_p}$ to $\mathrm{d}_{i_1}\cdots \mathrm{d}_{i_p}(Z)$ if $\mathrm{d}_{i_1}\cdots \mathrm{d}_{i_p}(Z)$ is even. 
	
	For example elements of the subspace $\mathfrak g_{\bar 0}$ have weight $0$ and elements of $\mathfrak g_{\bar 1}$ have weight $\alpha$.

\end{remark}

Let us define the parity change functor $\pi'$. Let us take the Lie superalgebra $\mathrm{gr}'(\mathrm{T'}^{\infty}(\mathfrak g))$, where $\mathfrak g$ is a Lie superalgebra. The functor parity change $\pi'$ is defined as follows
$$
\mathfrak h:= \pi'(\mathrm{gr'}(\mathrm{T'}^{\infty}(\mathfrak g))) = \mathrm{gr'}(\mathrm{T'}^{\infty}(\mathfrak g))
$$
as $\mathfrak g_{\bar 0}$-modules. Further we assume that all operators $\mathrm{d}_i$ are even and again $\mathrm{d}_i\circ \mathrm{d}_i=0$. In other words, this means that
\begin{align*}
\mathfrak h_{\bar 0}: = \bigoplus_{p\geq 0}	\bigoplus_{i_1<\cdots <i_p} \mathrm{d}_{i_1}\cdots \mathrm{d}_{i_p}(\mathfrak g_{\bar 0}), \quad \mathfrak h_{\bar 1}: = \bigoplus_{p\geq 0}	\bigoplus_{i_1<\cdots <i_p} \mathrm{d}_{i_1}\cdots \mathrm{d}_{i_p}(\mathfrak g_{\bar 1}).
\end{align*}
To simplify our presentation we will use the following notations
\begin{align*}
I=\{i_1,\ldots, i_p\}, \quad J=\{j_1,\ldots, j_q\}, \quad K=\{k_1,\ldots, k_r\},
\end{align*}
and $\mathrm{d}_I$ for $\mathrm{d}_{i_1}\cdots \mathrm{d}_{i_p}$. We denote by $\sharp I:=C(I)\,\, mod\,\, 2 = \bar p$ the cardinality of $I$ modulo $2$. The multiplication in $\mathfrak h= \pi'(\mathrm{gr}'(\mathrm{T'}^{\infty}(\mathfrak g)))$ is defined by the following  rules
\begin{enumerate}
	\item[(Rule 1)] If $I\cap J\ne \emptyset$, we have $[\mathrm{d}_I(\mathfrak g), \mathrm{d}_{ J}(\mathfrak g) ] =\{0\}$.
	\item[(Rule 2)] If $\sharp I+\bar i$ and  $\sharp J+\bar j$ are odd, we have $[\mathrm{d}_I(\mathfrak g_{\bar i}), \mathrm{d}_I(\mathfrak g_{\bar j}) ] =\{0\}$.
	\item[(Rule 3)] In other cases we have 	$[\mathrm{d}_I(\mathfrak g_{\bar i}), \mathrm{d}_I(\mathfrak g_{\bar j}) ] = \mathrm{d}_{I\cup J}([\mathfrak g_{\bar i}, \mathfrak g_{\bar j}])$.
\end{enumerate}

\begin{theorem}\label{theor h is a Lie superalgebra}
	The superspace  $\mathfrak h= \pi'(\mathrm{gr}'(\mathrm{T'}^{\infty}(\mathfrak g)))$ is a Lie superalgebra.
\end{theorem}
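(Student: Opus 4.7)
The plan is to verify bilinearity, graded skew-symmetry, and the graded Jacobi identity for $\mathfrak h$ directly. The underlying vector space of $\mathfrak h$ coincides with $\mathrm{gr}'(\mathrm{T'}^{\infty}(\mathfrak g))$ as a $\mathfrak g_{\bar 0}$-module, with new parity $|\mathrm{d}_I X|_{\mathfrak h} = |X|$ and bracket specified by Rules 1--3. I adopt the convention that $I\cup J$ denotes the sorted disjoint union, so that $\mathrm{d}_{I\cup J}=\mathrm{d}_{J\cup I}$.

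Bilinearity and the fact that the bracket has parity $\bar 0$ in the new grading (since $|\mathrm{d}_{I\cup J}([X,Y])|_{\mathfrak h}=|X|+|Y|$) are immediate from Rules 1--3. Graded skew-symmetry is trivial in the Rule 1 and Rule 2 cases, and in the Rule 3 case it follows from the identification $\mathrm{d}_{I\cup J}=\mathrm{d}_{J\cup I}$ together with $[Y,X]=-(-1)^{|X||Y|}[X,Y]$ in $\mathfrak g$.

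The heart of the proof is the graded Jacobi identity
\[
[U,[V,W]]_{\mathfrak h} - (-1)^{|X||Y|}[V,[U,W]]_{\mathfrak h} = [[U,V]_{\mathfrak h}, W]_{\mathfrak h}
\]
for $U=\mathrm{d}_I X$, $V=\mathrm{d}_J Y$, $W=\mathrm{d}_K Z$. With $a,b,c=|X|,|Y|,|Z|$ and $p,q,r=\sharp I,\sharp J,\sharp K$, the verification splits into three cases. \emph{(A)} If $I,J,K$ are pairwise disjoint and no pair among $\{a+p, b+q, c+r\}$ is (odd, odd), then Rule 3 applies throughout and each of the three terms equals $\mathrm{d}_{I\cup J\cup K}$ applied to one of the three summands in super-Jacobi on $\mathfrak g$; the identity then reduces to super-Jacobi in $\mathfrak g$. \emph{(B)} If two of $I,J,K$ intersect, Rule 1 propagates: a shared index lies in each of $I\cup J$, $J\cup K$, $I\cup K$ and $I\cup J\cup K$, so every Jacobi term is killed by Rule 1. \emph{(C)} If $I,J,K$ are pairwise disjoint but some pair among $\{a+p, b+q, c+r\}$ is (odd, odd), then Rule 2 propagates: since Rule 3 is additive on old parities (the result $\mathrm{d}_{I\cup J}([X,Y])$ has old parity $(a+p)+(b+q)$), any odd-odd pair forces Rule 2 at either the inner or the outer step of every Jacobi term, and a parity enumeration confirms simultaneous vanishing.

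The main obstacle is Case (C), the Rule 2 bookkeeping. The key observation is that an odd-odd old-parity pair is stable under Rule 3, so if an inner bracket $[V,W]$ escapes Rule 2 (because $c+r$ is even) its Rule 3 result has odd old parity, which combines with the odd old parity of $U$ to trigger Rule 2 at the outer step of $T_1$, and analogously for $T_2$. Going through the eight parity patterns of $(a+p,b+q,c+r)$ confirms that in every subcase all of $T_1,T_2,T_3$ vanish simultaneously, completing the proof that $\mathfrak h$ is a Lie superalgebra.
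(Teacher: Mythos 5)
Your proof is correct, and at bottom it is the same strategy as the paper's: a direct verification of the graded Jacobi identity against Rules 1--3, in which the only nontrivial case is the one where Rule 3 governs every bracket and the identity collapses to $\mathrm{d}_{I\cup J\cup K}$ applied to the super-Jacobi identity of $\mathfrak g$. The difference lies in how the cases are cut. The paper first fixes the $\Z_2$-parities of the three arguments (Step 1: all in $\mathfrak h_{\bar 0}$, handled by a structural shortcut --- the subalgebra $\bigoplus_I \mathrm{d}_I(\mathfrak g_{\bar 0})$ is a split Lie superalgebra, so its parity reversal is a Lie algebra by Remark~\ref{rem parity change}(2); Step 2: two even, one odd; Step 3: all odd) and then enumerates the cardinality parities $\sharp I,\sharp J,\sharp K$, always under the standing assumption that $I,J,K$ are pairwise disjoint. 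You instead sort by which rule governs the brackets: intersecting index sets (Rule 1 propagates to every term), an odd--odd pair of old parities (your Case (C) stability observation that Rule 2 is triggered at the inner or outer step of each term), or neither (reduction to $\mathfrak g$). Your organization buys uniformity and completeness: it explicitly covers graded skew-symmetry, the intersecting-index case, and the mixed case of one even and two odd arguments (i.e.\ the $\mathfrak h_{\bar 0}$-equivariance of the bracket $\mathfrak h_{\bar 1}\otimes\mathfrak h_{\bar 1}\to\mathfrak h_{\bar 0}$), none of which the paper's three steps spell out; what the paper's organization buys is the even part as a standalone structural statement. One point worth keeping in your write-up: Rules 1--3 as stated only determine the bracket at the level of subspaces, so your normalization $[\mathrm{d}_IX,\mathrm{d}_JY]=\mathrm{d}_{I\cup J}[X,Y]$ with $I\cup J$ sorted (harmless, since the $\mathrm{d}_i$ are even in $\mathfrak h$) is precisely what makes skew-symmetry and the sign $(-1)^{|X||Y|}$ in your Jacobi computation come out right.
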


\begin{proof} 
{\bf Step 1.}	Let us prove first that $\mathfrak h_{\bar 0}$ is a Lie algebra. Consider the following subspace
	$$
	\mathfrak r:= \bigoplus_{p\geq 0}	\bigoplus_{i_1<\cdots <i_p} \mathrm{d}_{i_1}\cdots \mathrm{d}_{i_p}(\mathfrak g_{\bar 0})\subset \mathrm{gr}'(\mathrm{T'}^{\infty}(\mathfrak g)).
	$$
 Clearly, $\mathfrak r$ is a Lie subsuperalgebra in $\mathrm{gr'}(\mathrm{T'}^{\infty}(\mathfrak g))$ and this Lie superalgebra is split. This is $\mathfrak r=\mathfrak r_{\bar 0} \oplus \mathfrak r_{\bar 1}$ with $[\mathfrak r_{\bar 1}, \mathfrak r_{\bar 1}]=\{0\}$. Such Lie superalgebras possesses a parity reversion, see Remark \ref{rem parity change} part (2). We can see that in notations of Remark \ref{rem parity change} we have $\mathfrak r_{\bar 0} \oplus \mathfrak r_{\bar 1}[\bar 1] = \mathfrak h_{\bar 0}$.

{\bf Step 2.}	Let us prove that $\mathfrak h_{\bar 1}$ is an $\mathfrak h_{\bar 0}$-module. Let us take $\mathrm{d}_I(X)\in \mathrm{d}_I(\mathfrak g_{\bar 0})$, $\mathrm{d}_J(Y)\in \mathrm{d}_J(\mathfrak g_{\bar 0})$ and $\mathrm{d}_K(Z)\in \mathrm{d}_K(\mathfrak g_{\bar 1})$ with $I\cap J=\emptyset$, $I\cap K=\emptyset$ and $J\cap K=\emptyset$. We consider the following cases
\begin{enumerate}
	\item Let $\sharp I=\sharp J=\bar 0$, any $K$. Then
	\begin{align*}
	[\mathrm{d}_I(X),[\mathrm{d}_J(Y), \mathrm{d}_K(Z)] ] + 	[\mathrm{d}_J(Y),[\mathrm{d}_K(Z), \mathrm{d}_I(X)] ]+ 	[\mathrm{d}_K(Z),[\mathrm{d}_I(X), \mathrm{d}_J(Y)] ]=\\
	\mathrm{d}_I\circ \mathrm{d}_J\circ \mathrm{d}_K([X,[Y,Z]]) + \mathrm{d}_J\circ \mathrm{d}_K\circ \mathrm{d}_I([Y,[Z,X]]) +\mathrm{d}_K\circ \mathrm{d}_I\circ \mathrm{d}_J([Z,[X,Y]]) =\\
	\mathrm{d}_I\circ \mathrm{d}_J\circ \mathrm{d}_K ([X,[Y,Z]] + [Y,[Z,X]]+ [Z,[X,Y]])=0.
	\end{align*}
	\item 	Let $\sharp I=\bar 1$, $\sharp J=\bar 0$ and $\sharp K=\bar 0$. Then
	\begin{align*}
	[\mathrm{d}_I(X),[\mathrm{d}_J(Y), \mathrm{d}_K(Z)] ] = 	[\mathrm{d}_J(Y),[\mathrm{d}_K(Z), \mathrm{d}_I(X)] ]= 	[\mathrm{d}_K(Z),[\mathrm{d}_I(X), \mathrm{d}_J(Y)] ]=0.
	\end{align*}
	\item 	Let $\sharp I=\bar 1$, $\sharp J=\bar 0$ and $\sharp K=\bar 1$. Then 	
	\begin{align*}
	[\mathrm{d}_I(X),[\mathrm{d}_J(Y), \mathrm{d}_K(Z)] ] + 	[\mathrm{d}_J(Y),[\mathrm{d}_K(Z), \mathrm{d}_I(X)] ]+ 	[\mathrm{d}_K(Z),[\mathrm{d}_I(X), \mathrm{d}_J(Y)] ]=\\\mathrm{d}_I\circ \mathrm{d}_J\circ \mathrm{d}_K ([X,[Y,Z]] + [Y,[Z,X]]+ [Z,[X,Y]])=0.
	\end{align*}
	\item 	Let $\sharp I=\sharp J=\bar 1$ and $\sharp K=\bar 0$. Then
	\begin{align*}
	[\mathrm{d}_I(X),[\mathrm{d}_J(Y), \mathrm{d}_K(Z)] ] = 	[\mathrm{d}_J(Y),[\mathrm{d}_K(Z), \mathrm{d}_I(X)] ]= 	[\mathrm{d}_K(Z),[\mathrm{d}_I(X), \mathrm{d}_J(Y)] ]=0.
	\end{align*}
		\item 	Let $\sharp I=\sharp J=\sharp K=\bar 1$. Then
	\begin{align*}
	[\mathrm{d}_I(X),[\mathrm{d}_J(Y), \mathrm{d}_K(Z)] ] = 	[\mathrm{d}_J(Y),[\mathrm{d}_K(Z), \mathrm{d}_I(X)] ]= 	[\mathrm{d}_K(Z),[\mathrm{d}_I(X), \mathrm{d}_J(Y)] ]=0.
	\end{align*}
\end{enumerate}

{\bf Step 3.} Let us check Jacobi identity for elements from $\mathfrak h_{\bar 1}$. Let us take $\mathrm{d}_I(X)\in \mathrm{d}_I(\mathfrak g_{\bar 1})$, $\mathrm{d}_J(Y)\in \mathrm{d}_J(\mathfrak g_{\bar 1})$ and $\mathrm{d}_K(Z)\in \mathrm{d}_K(\mathfrak g_{\bar 1})$ with $I\cap J=\emptyset$, $I\cap K=\emptyset$ and $J\cap K=\emptyset$ and consider the following cases
\begin{enumerate}
	\item Let $\sharp I=\sharp J=\bar 0$, any $K$. Then
	\begin{align*}
	[\mathrm{d}_I(X),[\mathrm{d}_J(Y), \mathrm{d}_K(Z)] ] = 	[\mathrm{d}_J(Y),[\mathrm{d}_K(Z), \mathrm{d}_I(X)] ]= 	[\mathrm{d}_K(Z),[\mathrm{d}_I(X), \mathrm{d}_J(Y)] ]=0.
	\end{align*}
	
\item 	Let $\sharp I=\bar 0$, $\sharp J=\bar 1$ and $\sharp K=\bar 1$. Then
\begin{align*}
[\mathrm{d}_I(X),[\mathrm{d}_J(Y), \mathrm{d}_K(Z)] ] + 	[\mathrm{d}_J(Y),[\mathrm{d}_K(Z), \mathrm{d}_I(X)] ]+ 	[\mathrm{d}_K(Z),[\mathrm{d}_I(X), \mathrm{d}_J(Y)] ]=\\\mathrm{d}_I\circ \mathrm{d}_J\circ \mathrm{d}_K ([X,[Y,Z]] + [Y,[Z,X]]+ [Z,[X,Y]])=0.
\end{align*}

	\item 	Let $\sharp I=\sharp J=\sharp K=\bar 1$. Then again
\begin{align*}
[\mathrm{d}_I(X),[\mathrm{d}_J(Y), \mathrm{d}_K(Z)] ] + 	[\mathrm{d}_J(Y),[\mathrm{d}_K(Z), \mathrm{d}_I(X)] ]+ 	[\mathrm{d}_K(Z),[\mathrm{d}_I(X), \mathrm{d}_J(Y)] ]=\\\mathrm{d}_I\circ \mathrm{d}_J\circ \mathrm{d}_K ([X,[Y,Z]] + [Y,[Z,X]]+ [Z,[X,Y]])=0.
\end{align*}
\end{enumerate}
The proof is complete.
\end{proof}

\begin{remark}
	It is unexpected that such a parity change can give a well-defined Lie superalgebra. 
\end{remark}

\subsection{Functor inverse $\iota'$}

Our goal now is to define a $\mathbb Z$-graded subsuperalgebra $\mathfrak p:= \iota' \circ \pi'\circ\mathrm{gr}'\circ \mathrm{T'}^{\infty}(\mathfrak g)$ in $\pi'\circ\mathrm{gr}'\circ \mathrm{T'}^{\infty}(\mathfrak g)$. We put
\begin{align*}
&\mathfrak p_0:= \mathfrak g_{\bar 0};\\
& \mathfrak p_1:= \mathrm{diag} ( \mathfrak g_{\bar 1} \oplus \bigoplus_{i} \mathrm{d}_i(\mathfrak g_{\bar 1}));\\
& \mathfrak p_2:= \mathrm{diag} (\bigoplus_{i} \mathrm{d}_i(\mathfrak g_{\bar 0}) \oplus  \bigoplus_{i<j} \mathrm{d}_i\mathrm{d}_j(\mathfrak g_{\bar 0})  );\\
& \mathfrak p_3:= \mathrm{diag}(\bigoplus_{i<j} \mathrm{d}_i\mathrm{d}_j(\mathfrak g_{\bar 1}) \oplus  \bigoplus_{i<j<k} \mathrm{d}_i\mathrm{d}_j\mathrm{d}_k(\mathfrak g_{\bar 1})  );\\
&\cdots\\
&  \mathfrak p_n:= \mathrm{diag} ( \bigoplus_{C(I)=n-1} \mathrm{d}_I(\mathfrak g_{\bar n}) \oplus  \bigoplus_{C(J)=n} \mathrm{d}_J(\mathfrak g_{\bar n}) );\\
&\cdots
\end{align*}
Here $C(I)$ is the cardinality of $I$. 
\begin{proposition}\label{prop p is Lie subalgebra}
	The subsuperspace $\mathfrak p$ is a $\mathbb Z$-graded Lie subsuperalgebra.
\end{proposition}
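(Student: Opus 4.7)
The graded skew-symmetry and the graded Jacobi identity are inherited from $\mathfrak h := \pi'\circ\mathrm{gr}'\circ \mathrm{T'}^{\infty}(\mathfrak g)$ by Theorem~\ref{theor h is a Lie superalgebra}, so the only thing to check is the closure condition $[\mathfrak p_m, \mathfrak p_n]\subset \mathfrak p_{m+n}$ for all $m, n\geq 0$. To do this uniformly, I would parametrise each component of $\mathfrak p$ by a single ``diagonal'' element: for $n\geq 1$ and $X\in \mathfrak g_{\bar n}$ put
\[
D_n(X) := \sum_{|I|=n-1} \mathrm{d}_I(X) + \sum_{|J|=n} \mathrm{d}_J(X),
\]
with $D_0(X):=X$. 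The map $X\mapsto D_n(X)$ identifies $\mathfrak g_{\bar n}$ with $\mathfrak p_n$; the formally infinite sum makes sense at each finite truncation of the inverse limit of Section~\ref{sec inverse limit}.

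The core computation is then a term-by-term expansion of $[D_m(X), D_n(Y)]$ via (Rule~1)--(Rule~3). The summands split into four groups indexed by $(|I|, |J|) \in \{m-1, m\}\times \{n-1, n\}$, of which (Rule~1) retains only those $(I, J)$ with $I\cap J = \emptyset$. The parity condition in (Rule~2) kills the group $(m-1, n-1)$ because there both $\sharp I + \bar m$ and $\sharp J + \bar n$ equal $\bar 1$. The remaining three groups yield $\mathrm{d}_{I\cup J}([X,Y])$ by (Rule~3), which already lands in the subspaces comprising $\mathfrak p_{m+n}$: the cardinality of $I\cup J$ is $m+n-1$ or $m+n$, and $[X,Y]\in \mathfrak g_{\overline{m+n}}$ has the correct parity.

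The remaining step is combinatorial. Fixing a set $K$ with $|K| = m + n - 1$, the $(m-1, n)$-group produces $\binom{m+n-1}{m-1}$ copies of $\mathrm{d}_K([X,Y])$ and the $(m, n-1)$-group produces $\binom{m+n-1}{m}$ further copies, which combine by Pascal's identity into $\binom{m+n}{m}$. For $|K| = m + n$ only the $(m, n)$-group contributes, again with $\binom{m+n}{m}$ copies. Since the two pieces of $D_{m+n}([X,Y])$ acquire the same coefficient, the bracket assembles into
\[
[D_m(X), D_n(Y)] = \binom{m+n}{m}\, D_{m+n}([X, Y]) \in \mathfrak p_{m+n},
\]
which is the desired closure. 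The edge cases $m = 0$ or $n = 0$ reduce to the same identity with binomial coefficient $1$.

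The delicate point is that the notation $\mathrm{d}_{I\cup J}$ in (Rule~3) must be independent of the splitting $(I, J)$ of $K$; otherwise terms coming from different decompositions of the same $K$ would not simply add. This is where the necessity of the parity change functor $\pi'$ becomes visible: after $\pi'$ the symbols $\mathrm{d}_i$ are even and formally commute, so $\mathrm{d}_K(Z)$ is well defined as a function of the underlying set $K$ and the potential sign ambiguities disappear, precisely as hinted at in Remark~\ref{rem parity change}. Verifying this sign-free identification of $\mathrm{d}_{I\cup J}$ on all three non-vanishing groups is the main technical step on which the combinatorial count above rests.
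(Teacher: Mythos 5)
Your proposal is correct and follows essentially the same route as the paper: the same diagonal parametrisation of $\mathfrak p_n$, the same use of (Rule~2) to annihilate the $(m-1,n-1)$ group, and the same Pascal-identity count producing the common coefficient $\binom{m+n}{m}$ on both cardinality pieces of the diagonal element $D_{m+n}([X,Y])$. The only addition is your explicit remark on the splitting-independence of $\mathrm{d}_{I\cup J}$ after the parity change, which the paper builds into the statement of the rules rather than verifying separately.
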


\begin{proof}
	 Let us prove that $[\mathfrak p_i,\mathfrak p_j ] \subset \mathfrak p_{i+j}$. Let us take $X\in \mathfrak g_{\bar i}$ and $Y\in \mathfrak g_{\bar j}$. Consider
\begin{align*}
	 \left[ \sum_{C(I)=i-1} \mathrm{d}_I(X) +   \sum_{C( J)=i} \mathrm{d}_J(X),
	  \sum_{C (I)'=j-1} \mathrm{d}_{I'}(Y) + \sum_{C(J')=j} \mathrm{d}_{J'}(Y)
	  \right].
	 \end{align*}
	Using Rules (2) we get
	\begin{align*}
	\left[\sum_{C( I)=i-1} \mathrm{d}_I(X), \sum_{C( I)'=j-1} \mathrm{d}_{I'}(Y) \right] =0.
	\end{align*}
Further,
\begin{align*}
\left[ \sum_{C(I)=i-1} \mathrm{d}_I(X),
   \sum_{C( J')=j} \mathrm{d}_{J'}(Y)
\right] +
\left[ \sum_{C( J)=i} \mathrm{d}_J(X),
\sum_{C( I')=j-1} \mathrm{d}_{I'}(Y)
\right] +\\
  \left[ \sum_{C( J)=i} \mathrm{d}_J(X), \sum_{C( J')=j} \mathrm{d}_{J'}(Y) \right]=\\
\underbrace{\left(\binom{i+j-1}{j}+\binom{i+j-1}{j-1}\right)}_{\binom{i+j}{i}} \sum_{C (K)=i+j-1} \mathrm{d}_K([X,Y]) +\\ 
\binom{i+j}{i}\sum_{C(K)=i+j} \mathrm{d}_K([X,Y]).
\end{align*}
The proof is complete. 
\end{proof}

If we have a morphism $\psi$ of Lie superalgebras, then  $\pi'\circ \mathrm{gr}'\circ \mathrm{T'}^{\infty} (\psi)$ preserves subalgebras $\mathfrak p$'s. Therefore we can define $\iota'$ on  morphisms. Therefore the functor $\iota'$ is defined on the image of the functor $\pi'\circ \mathrm{gr}'\circ \mathrm{T'}^{\infty}$.
Summing up, we constructed the following functor
$$
\mathrm F': = \iota' \circ\pi'\circ \mathrm{gr}'\circ \mathrm{T'}^{\infty}:\SLie \to \GrLie.
$$

\begin{remark}
	The Lie superalgebra $\mathfrak p$ is ``locally isomorphic" to $\mathfrak g$ in the following sense. We have $\mathfrak p_{ 0}\simeq \mathfrak g_{\bar 0}$ as Lie superalgebras and $\mathfrak p_{n}\simeq \mathfrak g_{\bar n}$ as $ \mathfrak g_{\bar 0}$-modules for any $n$.
\end{remark}

\subsection{The functor $\F_{\infty}$ as a inverse limit}\label{rem direct lim of F'}

We can define the Lie superalgebras $\oT'^{\infty}(\g)$, $\gr'(\oT'^{\infty}(\g))$,   $\pi'(\gr'(\oT'^{\infty}(\g)))$ and  $\iota'(\pi'(\gr'(\oT'^{\infty}(\g))))$ using the inverse limit.  Indeed,
\begin{align}
\oT'^{\infty}(\g) &= \varprojlim \oT'^{n}(\g)\label{eq1};\\
 \gr'(\oT'^{\infty}(\g)) &= \varprojlim \gr'(\oT'^{n}(\g))\label{eq2};\\
 \pi'(\gr'(\oT'^{\infty}(\g))) &= \varprojlim \pi'(\gr'(\oT'^{n}(\g)))\label{eq3};\\
\iota'(\pi'(\gr'(\oT'^{\infty}(\g)))) &= \varprojlim \iota'(\pi'(\gr'(\oT'^{n}(\g))))\label{eq4}.
\end{align}

Lemma \ref{lemTinfty g} implies (\ref{eq1}). Further, Equality (\ref{eq2})  follows from definition of the functor $\gr'$.  To obtain (\ref{eq3}) we can repeat Theorem \ref{theor h is a Lie superalgebra}  assuming that we have only $n$ de Rham differentials $\dd_i$. Finally  (\ref{eq4}) follows from  Proposition \ref{prop p is Lie subalgebra} again assuming that we have only $n$ de Rham differentials $\dd_i$. Summing up,
$$
\F'(\g) = \F'_{\infty} (\g) = \varprojlim \F'_{n} (\g).
$$

\section{Generalized Donagi--Witten construction for\\ Lie supergroups}
Again the functor $\mathrm F$ is a composition of four functors: the iterated tangent functor $\oT^{\infty}$, the functor split $\mathrm{gr}$, the functor parity change $\pi$ and the functor inverse $\iota$. Let $\mathcal G$ be a Lie supergroup with the supergroup morphisms $\mu$, $\kappa$ and $e$.

\subsection{Iterated antitangent functor $\oT^{\infty}$}
Above we considered the tangent functor $\oT$ applying to a Lie supergroup $\mathcal G$. We saw that $\oT (\mathcal G)$ is a Lie supergroup again. Therefore we can iterate this procedure and get the Lie supergroup
$$
\oT^2 (\mathcal G):= \oT(\oT( \mathcal G)).
$$
By definition we put
$$
\oT^n (\mathcal G):= \oT\circ \cdots \circ
\oT( \mathcal G),
$$
where on the left hand side the tangent functor $\oT$ is iterated $n$ times. Now we define 
\begin{align*}
    &\oT^{\infty} (\mcG) = \varprojlim \oT^{n} (\mcG),\\ \oT^{\infty}(\mu) = \varprojlim \oT^{n} (\mu),\quad &\oT^{\infty}(\kappa) = \varprojlim \oT^{n} (\kappa),\quad \oT^{\infty}(e) = \varprojlim \oT^{n} (e),
\end{align*}
see Section \ref{sec inverse limit}.  
Clearly  the infinite dimensional supermanifold $\oT^{\infty} (\mcG)$ is a Lie supergroup, since the morphisms $\oT^{\infty} (\mu)$, $\oT^{\infty} (\kappa)$ and $\oT^{\infty} (e)$ satisfy the group axioms.

Let us describe $\oT^{\infty}(\mathcal G)$ in terms of graded Harish-Chandra pairs. Above we saw that
$$
\mathrm{T'}^{n}(\mathfrak g)\simeq \bigwedge(\xi_1,\xi_2,\ldots\xi_n)\otimes \mathfrak g\quad \text{and} \quad \mathrm{T'}^{\infty}(\mathfrak g)\simeq \bigwedge(\xi_1,\xi_2,\ldots)\otimes \mathfrak g,
$$
where $\bigwedge(\xi_1,\xi_2,\ldots)$ is the Grassmann algebra with infinitely many variables $\xi_1,\xi_2,\ldots$ labeled by natural numbers. We can identify the structure sheaf $\mathcal O_{\oT^{\infty}(\mathcal G)}$ of $\oT^{\infty}(\mcG)$  with
$$
\mathrm{Hom}'_{\mathcal U(\mathfrak g_{\bar 0})} (\mathcal U( \oT^{\infty}(\mathfrak g)), \mathcal F_{\mathcal G_0}) \subset  \mathrm{Hom}_{\mathcal U(\mathfrak g_{\bar 0})} (\mathcal U( \oT^{\infty}(\mathfrak g)), \mathcal F_{\mathcal G_0}),
$$
where $\mathrm{Hom}'$  are all $\mathcal U(\mathfrak g_{\bar 0})$-homomorphisms that are zero on an ideal generated by
$$
\bigoplus_{q>0} \bigwedge^q(\xi_{n},\xi_{n+1},\ldots)\otimes \mathfrak g \subset \mathcal U( \oT^{\infty}(\mathfrak g))
$$
for some $n\geq 1$.

\subsection{Functor split $\mathrm{gr}$, functor parity change $\pi$ and functor inverse $\iota$}

  The functor $\mathrm{gr}$ for $\oT^{\infty}(\mathcal G)$ is defined as above. More precisely, let $\mathcal J$ be the sheaf of ideals in $\mathcal O_{\oT^{\infty}(\mathcal G)}$ generated by odd elements. We get a filtration in $\mathcal O_{\oT^{\infty}(\mathcal G)}$ by the subsheaves $\mathcal J^p$, where $p\geq 0$. The corresponded graded sheaf we denote by $\mathrm{gr}  \mathcal O_{\oT^{\infty}(\mathcal G)}$. By definition $\mathrm{gr}  \mathcal O_{\oT^{\infty}(\mathcal G)}$ is the structure sheaf of $\mathrm{gr} (\oT^{\infty}(\mathcal G))$.
 In terms of graded Harish-Chandra pairs we have
$$
\mathcal O_{\mathrm{gr}(\oT^{\infty}(\mathcal G))} = \mathrm{Hom}'_{\mathcal U(\mathfrak g_{\bar 0})} (\mathcal U(\mathrm{gr'}\circ \mathrm{T'}^{\infty}(\mathfrak g)), \mathcal F_{\mathcal G_0}),
$$
where $\mathrm{Hom}'$  are all $\mathcal U(\mathfrak g_{\bar 0})$-homomorphisms that are  zero on some ideal generated by
$$
\mathrm{gr'}\Big(\bigoplus_{q>0} \bigwedge^q(\xi_{n},\xi_{n+1},\ldots)\otimes \mathfrak g\Big) \subset \mathcal U( \mathrm{gr'}\circ\mathrm{T'}^{\infty}(\mathfrak g)), \,\, n\geq 1.
$$
The supermanifold $\mathrm{gr}(\oT^{\infty}(\mathcal G))$ is an $\infty$-fold vector bundle of type $\Delta$, where $\Delta$ is generated by an odd weight $\alpha$ and even weights $\beta_1,\beta_2,\ldots$, see Proposition \ref{prop gr T is n-fold}. Since $\mathrm{gr'}\circ \mathrm{T'}^{\infty}(\mathfrak g))$ is a graded Lie superalgebra of type $\Delta$, the universal enveloping algebra 
$$
\mathcal U(\mathrm{gr'}\circ \mathrm{T'}^{\infty}(\mathfrak g))= \bigoplus_{\delta\in \Z\alpha\times \Z\beta_1\times \cdots} \mathcal U(\mathrm{gr'}\circ \mathrm{T'}^{\infty}(\mathfrak g))_{\delta}
$$ 
is $\Z\times\Z\times \cdots$-graded.  We have
\begin{equation}\label{eq Delta grading in O_G}
\mathcal O_{\mathrm{gr}(\oT^{\infty}(\mathcal G))} = \bigoplus_{\delta\in \Z\alpha\times \Z\beta_1\times \cdots}(\mathcal O_{\mathrm{gr}(\oT^{\infty}(\mathcal G))})_{\delta},
\end{equation}
where
$$
(\mathcal O_{\mathrm{gr}(\oT^{\infty}(\mathcal G))})_{\delta} = \mathrm{Hom}_{\mathcal U(\mathfrak g_{\bar 0})} (\mathcal U(\mathrm{gr'}\circ \mathrm{T'}^{\infty}(\mathfrak g))_{\delta}, \mathcal F_{\mathcal G_0}).
$$
Note that in this formula we can omit $'$ and write simply $\mathrm{Hom}$. We can use the equality (\ref{eq Delta grading in O_G}) as a definition of the structure sheaf $\mathcal O_{\mathrm{gr}(\oT^{\infty}(\mathcal G))}$. Since $\gr$ is a functor, the structure morphisms in $\mathrm{gr}(\oT^{\infty}(\mathcal G))$ are graded as well. In terms of inverse limit we have $\mathrm{gr}(\oT^{\infty}(\mathcal G)=  \varprojlim \mathrm{gr}(\oT^{n}(\mathcal G))$.

To define the functor $\pi$ we change parities in any $\gr\circ \oT^{n}(\mathcal G)$. In terms of graded Harish-Chandra pairs we get
$$
\mathcal O_{\pi\circ\mathrm{gr}\circ\oT^{\infty}(\mathcal G)} = \mathrm{Hom}'_{\mathcal U(\mathfrak g_{\bar 0})} (\mathcal U(\pi' \circ\mathrm{gr'} \circ\mathrm{T'}^{\infty}(\mathfrak g)), \mathcal F_{\mathcal G_0}),
$$
where $\mathrm{Hom}'$  are all $\mathcal U(\mathfrak g_{\bar 0})$-homomorphisms that are  zero on some ideal generated by
$$
\pi'\circ\mathrm{gr'}\Big(\bigoplus_{q>0} \bigwedge^q(\xi_{n},\xi_{n+1},\ldots)\otimes \mathfrak g\Big) \subset \mathcal U( \pi'\circ\mathrm{gr'}\circ\mathrm{T'}^{\infty}(\mathfrak g)),\,\,n\geq 1.
$$
In terms of inverse limit again we have $\pi\circ \mathrm{gr}(\oT^{\infty}(\mathcal G)=  \varprojlim \pi\circ\mathrm{gr}(\oT^{n}(\mathcal G))$. The Lie supergroup $\pi\circ\mathrm{gr}\circ\oT^{\infty}(\mathcal G)$ is a graded Lie supergroup of type $\pi(\Delta)$, where $\pi(\Delta)$ is the maximal multiplicity free system generated by odd weights $\alpha, \beta_1, \beta_2, \ldots$.

A similar idea we use for functor $\iota$. Recall that $\mathrm{F}:=\iota \circ\pi\circ \mathrm{gr}\circ \oT^{\infty} $ and we denote $\mathrm{F}_n:= \iota \circ\pi\circ \mathrm{gr}\circ \oT^{n-1}$.
 First of all we define the graded Lie supergroup $\mathrm{F}_n (\mathcal G)$ of degree $n$ (that is of type $\{0,1,\ldots,n\}$ with $|1|= \bar 1$) as the $\mathbb Z$-graded Lie supergroup corresponding to the $\Z$-graded Harish-Chandra pair
$ (\mathcal G_0, \mathrm{F}'_n(\mathfrak g)).$ For any $n$ we have a natural homomorphism $\mathrm{F}'_{n+1}(\mathfrak g) \to \mathrm{F}'_n(\mathfrak g).$
This induces a homomorphism of enveloping algebras
$$
\Phi_n:\mathcal U(\mathrm{F}'_{n+1}(\mathfrak g)) \to \mathcal U(\mathrm{F}'_n(\mathfrak g)).
$$
Therefore we have a natural map of sheaves 
$$
\mathcal O_{\mathrm{F}_n(\mathcal G)} = \mathrm{Hom}_{\mathcal U(\mathfrak g_{\bar 0})} (\mathcal U(\mathrm{F}'_n(\mathfrak g)), \mathcal F_{\mathcal G_0}) \hookrightarrow \mathcal O_{\mathrm{F}_{n_+1}(\mathcal G)} = \mathrm{Hom}_{\mathcal U(\mathfrak g_{\bar 0})} (\mathcal U(\mathrm{F}'_{n+1}(\mathfrak g)), \mathcal F_{\mathcal G_0})
$$
Now we can identify the structure sheaf of $\mathrm{F}(\mathcal G)$  with
$$
\mathcal O_{\mathrm{F}(\mathcal G)} = \mathrm{Hom}'_{\mathcal U(\mathfrak g_{\bar 0})} (\mathcal U(\mathrm{F}'(\mathfrak g)), \mathcal F_{\mathcal G_0}),
$$
where $\mathrm{Hom}'$ are all $\mathcal U(\mathfrak g_{\bar 0})$-homomorphisms that are  zero on some $\mathrm{Ker} (\Phi_n)$. Further we see that $\mathcal O_{\mathrm{F}(\mathcal G)}$ is $\mathbb Z$-graded. Indeed,
$$
\mathcal O_{\mathrm{F}(\mathcal G)} = \bigoplus_{n\in \mathbb Z} (\mathcal O_{\mathrm{F}(\mathcal G)})_n = \bigoplus_{n\in \mathbb Z}
\mathrm{Hom}_{\mathcal U(\mathfrak g_{\bar 0})} (\mathcal U(\mathrm{F}'(\mathfrak g))_n, \mathcal F_{\mathcal G_0}).
$$
The graded Lie supergroup morphisms can be defined by formulas (\ref{eq umnozh in supergroup}) or using inverse limit. 
In terms of inverse limit again we have $\F(\mathcal G)=  \varprojlim \F_{n}(\mathcal G))$.

\section{Coverings and semicoverings of a Lie superalgebra and\\ a Lie supergroup}

In this section we give a definition of a covering and a semicovering of a Lie superalgebra and a Lie supergroup. Further we show that the generalized Donagi--Witten construction leads to a construction of a covering and semicovering spaces of a Lie superalgebra and a Lie supergroup.  The case of any supermanifold will be considered in \cite{RV}. 

\subsection{Coverings and semicoverings of a Lie superalgebra}

We start with Lie superalgebras. Throughout this subsection we fix a surjective homomorphism $\phi:A\to B$ of abelian groups.

\subsubsection{A $\phi$-covering of a $B$-graded Lie superalgebra along a homomorphism $\phi: A\to B$}

\begin{definition}\label{def phi covering, algebras}  A $\phi$-covering of a $B$-graded Lie superalgebra $\g$ along a surjective homomorphism $\phi: A\to B$ of abelian groups is an $A$-graded superalgebra $\p = \oplus_{\alpha\in A} \p_{\alpha}$ together with a homomorphism $\Pi': \p\to \g$ such that
$\Pi'|_{\p_a}: \p_a \to \g_{\phi(a)}$ is a linear bijection for any $\alpha\in A$.
\end{definition}

 Note that the bracket on $\p$ is fully determined by the bracket on $\g$. Indeed, for $X\in \p_\alpha$, $X'\in \p_{\alpha'}$ we have $\Pi'([X, X']_{\p}) = [\Pi'(X), \Pi'(X')]_{\g} \in \g_{\phi(\alpha + \alpha')}$, hence
$$
    [X, Y]_{\p} = (\Pi'_{\alpha+\alpha'}|_{\p_{\alpha + \alpha'}})^{-1}([\Pi'(X), \Pi'(X')]_{\g}).
$$
Also $\Pi'_\alpha=\Pi'|_{\p_\alpha}: \p_\alpha\to \g_{\phi(\alpha)}$ is a $\g_0$-module map, where we identify the Lie superalgebras $\p_0$ and $\g_0$ via $\Pi_0$.

\begin{proposition}\label{p:univ_1} Let $\aaa$, $\g$ be an $A$- and a $B$-graded Lie superalgebras, respectively.
	Let $\psi:\aaa\to \g$ be a $B$-graded homomorphism of Lie superalgebras\footnote{Any $A$-graded Lie superalgebra is automatically $B$-graded.}, and let $\p$  be a covering of $\g$ along $\phi:A\to B$. Then there exists a unique $A$-graded homomorphism $\Psi: \aaa\to \p$ such that the following diagram is commutative
		$$
		\begin{tikzcd}
		 & \p  \arrow[d, "\Pi'"]\\
		\aaa \arrow[ur, dashrightarrow, "\exists ! \Psi"]   \arrow[r, "\psi"]  & \g
		\end{tikzcd}
		$$
\end{proposition}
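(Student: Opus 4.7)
The plan is to define $\Psi$ componentwise using the grading and the bijectivity hypothesis, then verify compatibility with brackets and uniqueness, each of which follows almost automatically from the data.

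First I would decompose $\aaa = \bigoplus_{\alpha \in A} \aaa_\alpha$. Since $\psi$ is $B$-graded, each component $\aaa_\alpha$ is sent into $\g_{\phi(\alpha)}$, because the $B$-degree of an element of $\aaa_\alpha$ is $\phi(\alpha)$. The covering hypothesis gives a linear bijection $\Pi'_\alpha := \Pi'|_{\p_\alpha}\colon \p_\alpha \xrightarrow{\sim} \g_{\phi(\alpha)}$, so I would define
\[
    \Psi|_{\aaa_\alpha} := (\Pi'_\alpha)^{-1} \circ \psi|_{\aaa_\alpha} \colon \aaa_\alpha \longrightarrow \p_\alpha
\]
and extend by linearity. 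By construction $\Psi$ is $A$-graded and satisfies $\Pi' \circ \Psi = \psi$.

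Next I would verify that $\Psi$ is a homomorphism of Lie superalgebras. For homogeneous $X\in \aaa_\alpha$ and $Y\in \aaa_{\alpha'}$, both $\Psi([X,Y]_\aaa)$ and $[\Psi(X), \Psi(Y)]_\p$ lie in $\p_{\alpha+\alpha'}$. Applying $\Pi'$ and using that $\psi$ and $\Pi'$ are Lie superalgebra homomorphisms, we compute
\[
    \Pi'\bigl([\Psi(X), \Psi(Y)]_\p\bigr) = [\Pi'\Psi(X), \Pi'\Psi(Y)]_\g = [\psi(X), \psi(Y)]_\g = \psi([X,Y]_\aaa) = \Pi'\bigl(\Psi([X,Y]_\aaa)\bigr).
\]
Since $\Pi'_{\alpha+\alpha'}$ is injective on $\p_{\alpha+\alpha'}$, the two elements coincide, so $\Psi$ preserves brackets.

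Finally, uniqueness is immediate: any $A$-graded homomorphism $\Psi'\colon \aaa \to \p$ with $\Pi'\circ \Psi' = \psi$ must send $\aaa_\alpha$ into $\p_\alpha$, and the condition $\Pi'_\alpha \circ \Psi'|_{\aaa_\alpha} = \psi|_{\aaa_\alpha}$ together with the injectivity of $\Pi'_\alpha$ forces $\Psi'|_{\aaa_\alpha} = (\Pi'_\alpha)^{-1} \circ \psi|_{\aaa_\alpha} = \Psi|_{\aaa_\alpha}$. There is no real obstacle here; the only subtle point is the $B$-gradedness of $\psi$, which is what makes the componentwise definition land in the correct component $\p_\alpha$ (and not merely in some fibre of $\Pi'$ over $\g_{\phi(\alpha)}$).
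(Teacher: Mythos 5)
Your proof is correct and follows essentially the same route as the paper's: define $\Psi$ on each graded piece as $(\Pi'_\alpha)^{-1}\circ\psi|_{\aaa_\alpha}$, check the bracket by applying $\Pi'$ and invoking injectivity on $\p_{\alpha+\alpha'}$, and get uniqueness from the same injectivity. Your write-up is in fact slightly more explicit than the paper's on the uniqueness step and on why $\psi(\aaa_\alpha)\subset\g_{\phi(\alpha)}$.
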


\begin{proof}
	 We define $\Psi$ as a linear map such that $\Psi(\aaa_s)\subset \p_s$ for any $s \in A$ and such that  $\Pi'\circ \Psi = \psi$. Let us check that $\Psi$  is a homomorphism. Indeed, let us take $X\in \aaa_s$ and $Y\in \aaa_t$. Then
	\begin{align*}
	\Pi' ([\Psi(X),\Psi(Y) ]) =  [\Pi' \circ \Psi(X),\Pi' \circ\Psi(Y) ]= [\psi(X),\psi(Y)] =\\
	\psi([X,Y]) = \Pi'\circ	\Psi([X,Y]).
	\end{align*}
	Since by definition of $\Psi$ both $\Psi([X,Y])$ and $[\Psi(X),\Psi(Y)]$ are in $\p_{s+t}$ and $\Pi'$ is locally bijective, we get the equality $[\Psi(X),\Psi(Y) ]= \Psi([X,Y])$.
\end{proof}

\begin{proposition}\label{p:lift_of_homomorphism}
Let $f:\mathfrak g\to \tilde\g$ be a homomorphism of $B$-graded Lie superalgebras. Then there exists  unique homomorphism $F$ of  $\phi$-coverings $\p$ and $\tilde\p$ such that the following diagram is commutative
$$
\begin{tikzcd}
\mathfrak p \arrow[r, "F"] \arrow["\Pi'", d]
& \tilde\p \arrow[d, "\tilde\Pi'" ] \\
\mathfrak g\arrow[r,  "f" ]
& \tilde\g
\end{tikzcd}
$$
\end{proposition}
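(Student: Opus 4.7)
The plan is to reduce this statement to the universal property established in Proposition \ref{p:univ_1}. Observe that $\p$ is $A$-graded and hence, via $\phi: A\to B$, inherits a $B$-grading by $\p^B_b := \bigoplus_{\phi(a)=b} \p_a$. The covering map $\Pi': \p\to \g$ is a $B$-graded homomorphism of Lie superalgebras with respect to this induced $B$-grading, since $\Pi'(\p_a)\subset \g_{\phi(a)}$.

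First I would form the composition $f\circ \Pi': \p \to \tilde\g$. This is a $B$-graded homomorphism of Lie superalgebras from the (now $B$-graded) Lie superalgebra $\p$ to the $B$-graded Lie superalgebra $\tilde\g$. Next I would apply Proposition \ref{p:univ_1} with $\aaa=\p$ (viewed as an $A$-graded Lie superalgebra), with the target $\tilde\g$ and its $\phi$-covering $\tilde\p$, and with $\psi = f\circ \Pi'$. This yields a unique $A$-graded homomorphism $F: \p \to \tilde\p$ such that $\tilde\Pi' \circ F = f\circ \Pi'$, which is exactly the commutativity required by the diagram.

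Uniqueness of $F$ with the stated property is inherited directly from the uniqueness clause of Proposition \ref{p:univ_1}: any two $A$-graded homomorphisms $F_1,F_2: \p\to \tilde\p$ satisfying $\tilde\Pi'\circ F_i = f\circ \Pi'$ are both solutions to the universal problem associated with $\psi = f\circ \Pi'$, hence coincide.

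I do not anticipate a genuine obstacle here; the only minor point to verify carefully is that the $B$-grading on $\p$ obtained from its $A$-grading via $\phi$ makes $\Pi'$ a $B$-graded morphism and makes $f\circ \Pi'$ respect the $B$-grading, so that Proposition \ref{p:univ_1} genuinely applies. Both facts are immediate from the definition of a $\phi$-covering (Definition \ref{def phi covering, algebras}), which requires $\Pi'(\p_a)\subset \g_{\phi(a)}$, combined with the assumption that $f$ preserves the $B$-grading of $\g$ and $\tilde\g$.
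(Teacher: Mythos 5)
Your proposal is correct and is exactly the paper's argument: the paper's proof reads ``It follows immediately from Proposition~\ref{p:univ_1}, just take $\psi = f\circ \Pi'$,'' relying on the footnote there that any $A$-graded Lie superalgebra is automatically $B$-graded. Your extra verification that $\Pi'$ and $f\circ\Pi'$ respect the induced $B$-grading is a harmless elaboration of the same route.
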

\begin{proof} 
It follows immediately from Proposition~\ref{p:univ_1}, just take $\psi = f\circ \Pi'$.
\end{proof}
From Proposition \ref{p:lift_of_homomorphism} it follows that $\phi$-coverings are unique up to isomorphism.

\subsubsection{A $\phi$-covering and $\phi$-semicovering with support $C$}

Let $A$ and $B$ be abelian groups, $\phi: A\to B$ be a surjective homomorphism and $C\subset A$ be a subset. 

\begin{definition}\label{def phi covering with supprt algebras}  A $\phi$-covering with support $C$ of a $B$-graded Lie superalgebra $\g$ along a surjective homomorphism $\phi: A\to B$  is an $A$-graded superalgebra $\p = \oplus_{\alpha\in A} \p_{\alpha}$ with $supp (\p)=C$ such that $\phi(C)=B$ together with a surjective homomorphism $\Pi': \p\to \g$ such that
$\Pi'|_{\p_a}: \p_a \to \g_{\phi(a)}$ is a linear bijection for any $\alpha\in C$.
\end{definition}

Let $A$, $B$, $C$ and $\phi$ be as above, $\g$ and $\g'$ be a $A$-graded and $B$-graded Lie superalgebra, respectively. 

\begin{definition}
A map $\Psi: \g\to \g'$ is called  a partial homomorphism if $\Psi([X,Y ])= [\Psi(Y), \Psi(Y)]$ for any $X\in \g_{\alpha}$, $Y\in \g_{\beta}$ such that $\alpha$, $\beta$ and $\alpha+ \beta$ are in $C$.  
\end{definition}

\begin{definition}  A $\phi$-semicovering with support $C$ of a $B$-graded Lie superalgebra $\g$ along a surjective homomorphism $\phi: A\to B$  is an $A$-graded Lie superalgebra $\p = \oplus_{\alpha\in C} \p_{\alpha}$ with $supp(\p)=C$ such that $\phi(C)=B$ together with a surjective partial homomorphism $\Pi': \p\to \g$ such that
$\Pi'|_{\p_a}: \p_a \to \g_{\phi(a)}$ is a linear bijection for any $\alpha\in C$.
\end{definition}

For a $\phi$-covering and $\phi$-semicovering with support $C$ of a $B$-graded Lie superalgebra $\g$ we can prove analogues of Propositions \ref{p:univ_1} and \ref{p:lift_of_homomorphism}. 

\begin{proposition}\label{p:univ_1 C} {\bf (1)} Let $\aaa$, $\g$ be an $A$- and $B$-graded Lie superalgebras, respectively, and $supp (\aaa)=C$. 	Let $\psi:\aaa\to \g$ be a $B$-graded homomorphism of Lie superalgebras, and let $\p$  be a $\phi$-covering (or $\phi$-semicovering) with support $C$ of $\g$. Then there exists a unique $A$-graded homomorphism $\Psi: \aaa\to \p$ such that
	$ \psi= \Pi'\circ \Psi$.

{\bf (2)} Let $f:\mathfrak g\to \tilde\g$ be a homomorphism of $B$-graded Lie superalgebras. Then there exists  unique homomorphism $F$ of  $\phi$-coverings (or $\phi$-semicoverings) $\p$ and $\tilde\p$ with support $C$  such that $f \circ\Pi' = \tilde{\Pi}'\circ F$. 
\end{proposition}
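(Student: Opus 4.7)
The plan is to mirror the strategy used for Propositions~\ref{p:univ_1} and \ref{p:lift_of_homomorphism}, defining the required maps one homogeneous component at a time using the local bijections $\Pi'_\alpha := \Pi'|_{\mathfrak{p}_\alpha} : \mathfrak{p}_\alpha \to \mathfrak{g}_{\phi(\alpha)}$ available for every $\alpha \in C$. The support condition $\mathrm{supp}(\mathfrak{a}) = C = \mathrm{supp}(\mathfrak{p})$ is the key ingredient that keeps the argument working even when $\Pi'$ is only a partial homomorphism.

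For part (1), I would set $\Psi|_{\mathfrak{a}_s} := (\Pi'_s)^{-1} \circ \psi|_{\mathfrak{a}_s}$ for each $s \in C$ and extend by linearity; the equation $\Pi' \circ \Psi = \psi$ together with the $A$-graded requirement forces this formula, giving uniqueness. To check that $\Psi$ respects brackets, I would take homogeneous $X \in \mathfrak{a}_s$, $Y \in \mathfrak{a}_t$ (so $s,t \in C$) and split into two cases. If $s+t \in C$, then $\Pi'$ is bracket-preserving on the triple $(s,t,s+t)$ in both the covering and the semicovering sense, so
\[
\Pi'\bigl([\Psi(X),\Psi(Y)]_{\mathfrak{p}}\bigr)
= [\psi(X),\psi(Y)]_{\mathfrak{g}}
= \psi([X,Y]_{\mathfrak{a}})
= \Pi'(\Psi([X,Y])),
\]
and the bijectivity of $\Pi'_{s+t}$ gives $\Psi([X,Y]) = [\Psi(X),\Psi(Y)]$. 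If $s+t \notin C$, both sides vanish because $\mathfrak{a}_{s+t} = 0$ and $\mathfrak{p}_{s+t} = 0$ by the support conditions.

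For part (2), I would apply the construction of part (1) with $\mathfrak{a} = \mathfrak{p}$ and with $\psi := f \circ \Pi' : \mathfrak{p} \to \tilde{\mathfrak{g}}$, which is $B$-graded and lands inside a space where the $\phi$-(semi)covering $\tilde{\mathfrak{p}}$ provides the required local bijections $\tilde\Pi'_\alpha$. This directly yields the unique $A$-graded linear map $F : \mathfrak{p} \to \tilde{\mathfrak{p}}$ with $\tilde\Pi' \circ F = f \circ \Pi'$. In the covering case, $f\circ \Pi'$ is a genuine homomorphism and part (1) applies verbatim. In the semicovering case, $\psi = f \circ \Pi'$ is only a partial homomorphism, but one checks that the case analysis in part (1) used only the partial bracket-preservation of $\Pi'$ on triples inside $C$ together with the support condition, so the same argument produces $F$ as a full graded Lie superalgebra homomorphism.

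The main obstacle will be the bookkeeping in the semicovering case: one has to verify that the proof of part (1) genuinely goes through when $\psi$ is replaced by a partial homomorphism, which is exactly the situation that arises in part (2). Once it is observed that the support hypothesis on $\mathfrak{a}$ (respectively $\mathfrak{p}$) forces the bracket to vanish identically off $C$, so that the only nontrivial case is $s,t,s+t \in C$ where the partial-homomorphism property is precisely what one needs, the rest is routine diagram chasing.
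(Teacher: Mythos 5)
Your proposal is correct and follows essentially the same route as the paper: define $\Psi$ componentwise via the local bijections $\Pi'_s$, derive uniqueness from $\Pi'\circ\Psi=\psi$ together with gradedness, verify the bracket identity by pushing it down through $\Pi'$ when $s,t,s+t\in C$ and noting both sides vanish when $s+t\notin C$, and obtain (2) by applying (1) to $\psi=f\circ\Pi'$. Your extra remark that in the semicovering case $f\circ\Pi'$ is only a partial homomorphism, yet the case analysis still closes, is a point the paper leaves implicit but does not change the argument.
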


\begin{proof}  We define $\Psi$ as a linear map such that $\Psi(\aaa_s)\subset \p_s$ for any $s \in C$ and such that  $\Pi'\circ \Psi = \psi$. Now we just repeat arguments of the proofs of Propositions \ref{p:univ_1} and \ref{p:lift_of_homomorphism}. One non-trivial point is the proof that $\Psi$ is a homomorphism in the case of a semicovering. We have for $X\in \aaa_{s}$ and $Y\in \aaa_t$, where $s,t,s+t\in C$,
	\begin{align*}
	\Pi' ([\Psi(X),\Psi(Y) ]) =  [\Pi' \circ \Psi(X),\Pi' \circ\Psi(Y) ]= [\psi(X),\psi(Y)] =\\
	\psi([X,Y]) = \Pi'\circ	\Psi([X,Y]).
	\end{align*}
	Since by definition of $\Psi$ both $\Psi([X,Y])$ and $[\Psi(X),\Psi(Y)]$ are in $\mathfrak p_{s+t}$ and $\Pi'$ is locally bijective, we get the equality $[\Psi(X),\Psi(Y) ]= \Psi([X,Y])$.
In the case $s,t\in C$, but $s+t\notin C$, we have
$$
[\Psi(X),\Psi(Y) ] = \Psi([X,Y])=0.
$$
\end{proof}
From Proposition \ref{p:univ_1 C} it follows that $\phi$-coverings (and $\phi$-semicoverings) with support $C$ are unique up to isomorphism.

\subsection{A covering and a semicovering of a Lie supergroup}

Unlike the notion of a  covering of a Lie superalgebra, as far as we know, the notion of a covering of a Lie supergroup was never considered in the literature before. One possible way to give a definition of a  covering of a Lie supergroup is to use the graded covering of the corresponding Lie superalgebra. However we suggest a different way that is closer to the notion of a topological covering space and this approach can be used to define a covering for any supermanifold, see \cite{RV}.

We start with a definition of a semicovering  spaces for a Lie supergroup. In Section \ref{sec Z-covering, Lie superalgebra and Lie supergroup} we will show that for any Lie supergroup $\mcG$ the image  $\F (\mcG)$ is a $(\Z\to\Z_{\bar 2})$-covering of $\mcG$ with support $\{0,1,2,\ldots\}$.   In Section \ref{sebsec matrix form} we will give a simple explicit construction of a covering of any matrix Lie supergroup. Note that in general such a simple construction for a Lie supergroup is not applicable for the case of a supermanifold. Recall that if $\mathcal M$ is a supermanifold, we denote by $\mathcal O_{\mathcal M}$ its structure sheaf and by $\mathcal F_{\mathcal M_0}$ the structure sheaf of the underlying space $\mathcal M_0$.

\subsubsection{A semicovering of a Lie supergroup}

 Let $\mathcal P$ be a graded Lie supergroup of degree $n$ (or equivalently of type $C=\{0,1,\ldots, n\}$ with $|1|=\bar 1$) with multiplication morphism $\nu$ and $\mcG$ be a Lie supergroup with multiplication morphism $\mu$. Assume in addition that $\mathcal P_0=\mcG_0$. 
\begin{definition}
    A sum of $n$ morphisms $\Pi^n= \bigoplus\limits_{c=0}^n \Pi_c$, where
    $$
    \Pi_c=(id, \Pi^*_c): (\mathcal G_0, (\mathcal O_{\mathcal P})_c) \to (\mathcal G_0, \mathcal O_{\mathcal G}), \,\, c\in C,
    $$
    and $\Pi^*_c$ is a morphism of sheaves of vector spaces, is called a partial homomorphism of $\mathcal P$ to $\mcG$ with support $C$ if 
    $$
    \Pi^*_c(f\cdot g) = \sum_{c_1+c_2=s} \Pi^*_{c_1} (f) \cdot \Pi^*_{c_2}(g),\quad f,g\in \mathcal O_{\mcG},\,\, c, c_i\in C,
    $$
    and
    $$
    \nu^* \circ \Pi^*_c = \sum_{c_1+c_2=c} (\Pi^*_{c_1}\times \Pi^*_{c_2})\circ \mu^* \quad \text{for any $c, c_i\in C$.}
    $$
\end{definition}

Sometimes we will write a partial homomorphism $\Pi^n$ of $\mathcal P$ to $\mcG$ in the form $\Pi^n: \mathcal P\to \mcG$. Note that the sheaf morphism $(\Pi^n)^*: \mathcal O_{\mcG} \to \mathcal O_{\mathcal P}$ is in general not defined.

\begin{definition}\label{def semicovering of a Lie supergroup}  A $\phi: \Z\to \Z_2$-semicovering with support $C=\{0,1,2,\ldots,n\}$ of a Lie supergroup $\mathcal G$ is a graded Lie supergroup $\mathcal P$ of degree $n$  with $\mathcal P_0=\mathcal G_0$  together with a partial homomorphism $\Pi^n: \mathcal P\to \mcG$  such that we can choose atlases $\{\mathcal U_i\}$ and $\{\mathcal V_i\}$ on $\mcG$ and $\mathcal P$, respectively, with the same base space $(\mathcal U_i)_0= (\mathcal V_i)_0$,  with even and odd coordinates $(x_a,\xi_b)$ in $\mathcal U_i$ and  with graded coordinates $(y_a^s,\eta^t_b)$, where $s\in C$ is an even integer and  $t\in C$ is an odd integer, in $\mathcal V_i$ such that
$$
\Pi^*_s(x_a) =  y_a^s, \quad \Pi^*_t(\xi_b) =  \eta^t_b.
$$
\end{definition}


\subsubsection{A covering of a Lie supergroup}

Denote by $\mathcal P = \varprojlim \mathcal P_n$ the inverse limit of  graded Lie supergroups $\mathcal P_n$ of degree $n$  with the same underlying space $(\mathcal P_n)_0=  \mathcal G_0$. 

\begin{definition}\label{def new-covering of a Lie supergroup}  A $\phi: \Z\to \Z_2$-covering with support $C=\{0,1,2,\ldots,\}$ of a Lie supergroup $\mathcal G$ is a Lie supergroup $\mathcal P= \varprojlim \mathcal P_n$ together with a Lie supergroup homomorphism $\Pi =  \varprojlim(\Pi^k): \mathcal P \to \mathcal G$   such that $\Pi^k : \mathcal P_k\to \mcG$ is a semicovering  with support $C=\{0,1,2,\ldots,k\}$ for any $k\geq 0$.
\end{definition}

\begin{remark}\label{rem covering of a Lie supergroup} Definition  \ref{def new-covering of a Lie supergroup} implies that  in the case of a  $\phi: \Z\to \Z_2$-covering $\Pi: \mathcal P \to \mathcal G$ with support $C=\{0,1,2,\ldots,\}$ we can choose "atlases" $\{\mathcal U_i\}$ and $\{\mathcal V_i\}$ on $\mcG$ and $\mathcal P$, respectively, with the same base space $(\mathcal U_i)_0= (\mathcal V_i)_0$,  with even and odd coordinates $(x_a,\xi_b)$ in $\mathcal U_i$ and  with graded coordinates $(y_a^s,\eta^t_b)$, where $s$ is an even integer and  $t$ is an odd integer, in $\mathcal V_i$ such that
$$
pr_s\circ\Pi^*(x_a) =  y_a^s, \quad pr_t\circ\Pi^*(\xi_b) =  \eta^t_b,
$$
 where $pr_q: \mathcal O_{\mathcal P}\to (\mathcal O_{\mathcal P})_q$, $q\in \Z$, is the natural projection. In this case each $\mathcal V_i$ is an "infinite graded domain", which we understand as inverse limit of the corresponding finite graded domains. 
\end{remark}

Sometimes we will call a $\phi: \Z\to \Z_2$-covering with support $C=\{0,1,2,\ldots,\}$ of a Lie supergroup $\mathcal G$ simply a $\Z^{\geq 0}$-covering of $\mcG$. 

\begin{remark}
It looks more natural to give a definition of a covering space with support $\Z$.  The study of $\Z$-graded Lie supergroups (not necessary non-negatively $\Z$-graded) was announced in \cite{Kotov}. In the case of any $\Z$-graduation graded coordinates may be not nilpotent, this leads to significant difficulties, see \cite{Colovo}.  Therefore a definition of a covering space with support $\Z$ we leave for a future research. 
\end{remark}

\begin{remark}
    Our definition implies that in some sense $\mathcal P$ is locally diffeomorphic to $\mcG$. Indeed, let us choose $q\in \Z$ assuming for example that $q$ is even. Then the sheaf morphism $pr_s\circ\Pi^*$ determines a sheaf isomorphism of $S^*(x_a)|_{(\mathcal U_i)_0}\subset \mathcal F_{\mcG_0}$ and $S^*(y^s_a)|_{(\mathcal U_i)_0}$. Here $S^*(z_a)$ is the supersymmetric algebra generated by $z_a$. And similarly in the case of odd coordinates. In other words we have the following isomorphism of superdomains
    \begin{align*}
        ((\mathcal U_i)_0, S^*(x_a)) \simeq  ((\mathcal V_i)_0, S^*(y^s_a)), \quad ((\mathcal U_i)_0, S^*(\xi_b)) \simeq  ((\mathcal V_i)_0, S^*(\eta^t_b)).
    \end{align*}
\end{remark}

We say that a Lie supergroup $\mcG$ possesses an additional   non-negative $\Z$-grading if its structure sheaf possesses a $\Z$-grading $\mathcal O_{\mcG} = \bigoplus\limits_{q\geq 0} (\mathcal O_{\mcG})_q$ and all Lie supergroup morphisms are $\Z$-graded. In this paper we assume in addition that $(\mathcal O_{\mcG})_q\subset (\mathcal O_{\mcG})_{\bar q}$. Note that a Lie supergroup with a $\Z$-grading is not the same as  a graded Lie supergroup of degree $n$.  Now we can prove that our covering satisfies the same universal properties as a covering for a Lie superalgebra.

\begin{theorem}\label{theor univ_Lie supergroups} Let $\mathcal G$ be a Lie supergroup with an additional non-negative $\Z$-grading or a graded Lie supergroup of degree $n$ and $\mathcal G'$ be a Lie supergroup.	Let $\psi:\mathcal G\to \mathcal G'$ be a Lie supergroup homomorphism  and let $\mathcal P'$  be a $\Z^{\geq 0}$-covering of $\mathcal G'$. Then there exists a unique homomorphism of Lie supergroups $\Psi: \mathcal G\to \mathcal P'$, which preserves the $\Z$-gradings, such that the following diagram is commutative
		$$
		\begin{tikzcd}
		 & \mathcal P'  \arrow[d, "\Pi"]\\
		\mathcal G \arrow[ur, dashrightarrow, "\exists ! \Psi"]   \arrow[r, "\psi"]  & \mathcal G'
		\end{tikzcd}
		$$
\end{theorem}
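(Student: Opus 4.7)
The plan is to reduce the statement to its Lie-superalgebra analogue (Proposition \ref{p:univ_1 C}) via the equivalence between (graded) Lie supergroups and (graded) super Harish-Chandra pairs, as recalled in Theorem \ref{theor HC pairs} and its graded versions discussed after that theorem and in Section \ref{sec inverse limit}. Write $\g = \Lie \mcG$, $\g' = \Lie \mcG'$ and $\p' = \Lie \mathcal P'$. Under our hypothesis on $\mcG$, the Lie superalgebra $\g$ carries a non-negative $\Z$-grading (or is $\Z$-graded of degree $n$); similarly $\p' = \varprojlim \p'_k$ is $\Z^{\geq 0}$-graded as the inverse limit of the Lie superalgebras of $\mathcal P'_k$. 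The homomorphism $\Pi: \mathcal P'\to \mcG'$ induces, level by level, a partial homomorphism $\Pi'_k: \p'_k \to \g'$, and hence, in the limit, a graded-algebra homomorphism $\Pi' = \varprojlim \Pi'_k: \p'\to \g'$ which makes $\p'$ a $(\Z\to\Z_2)$-covering of $\g'$ with support $\Z^{\geq 0}$ in the sense of Definition \ref{def phi covering with supprt algebras}.

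First I would produce, for each finite truncation $n$, the unique $\Z$-graded partial-homomorphism $\Psi'_n: \g \to \p'_n$ that lifts $\Lie\psi: \g \to \g'$, by applying Proposition \ref{p:univ_1 C} (with $A = \Z$, $B = \Z_2$, and $C = \{0,1,\ldots,n\}$ if $\mcG$ has degree $n$, or $C = \Z^{\geq 0}$ in the $\Z$-graded case). Compatibility of the $\Psi'_n$ with the projections $\p'_{n+1}\to \p'_n$ is forced by uniqueness in Proposition \ref{p:univ_1 C}, so I obtain a unique graded homomorphism $\Psi' = \varprojlim \Psi'_n: \g \to \p'$ of $\Z$-graded Lie superalgebras covering $\Lie \psi$.

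Next I would assemble the pair $(\psi_0, \Psi')$ into a morphism of graded super Harish-Chandra pairs $(\mcG_0, \g) \to (\mathcal P'_0, \p')$. Compatibility with the adjoint actions on $\g$ and $\p'$ follows from uniqueness: both $\alpha_{\mathcal P'}(\psi_0(g))\circ \Psi'$ and $\Psi'\circ \alpha_{\mcG}(g)$ solve the lifting problem for the homomorphism $\aaa\mapsto \Pi'\circ \Psi'\circ \alpha_{\mcG}(g)$, hence coincide by the unicity clause of Proposition \ref{p:univ_1 C}. Applying the graded Harish-Chandra equivalence (Theorem \ref{theor HC pairs} together with its graded-$\Z$ extension from \cite[Theorem~5.6]{Jubin}, promoted to the inverse limit by Section \ref{sec inverse limit}) then gives a unique graded Lie-supergroup homomorphism $\Psi: \mcG \to \mathcal P'$ realizing $(\psi_0, \Psi')$. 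The identity $\Pi\circ \Psi = \psi$ holds level-wise, because its Harish-Chandra incarnation is $\Pi'\circ \Psi' = \Lie\psi$ by construction, and the underlying map on bodies is $\psi_0$ on both sides. Uniqueness of $\Psi$ is transported back from uniqueness of $\Psi'$ together with uniqueness in the Harish-Chandra equivalence.

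The main obstacle I expect is the bookkeeping for the inverse limit: one must verify that the explicit formulas \eqref{eq umnozh in supergroup} for the multiplication on $\mathcal P'$ are well-defined on the sub-sheaf $\mathrm{Hom}'$ of $\mathcal U(\g_{\bar 0})$-homomorphisms vanishing on some $\ker(\Phi_n)$, and that the induced $\Psi^*$ preserves this finiteness condition. This reduces to checking that $\Psi'$ is continuous with respect to the natural filtration of the universal enveloping algebras $\mathcal U(\g)$ and $\mathcal U(\p')$, which is automatic once $\Psi'$ is graded of finite weight on each homogeneous component. Everything else is a routine transfer through the (graded) Harish-Chandra equivalence.
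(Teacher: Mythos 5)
Your proposal is correct in outline but takes a genuinely different route from the paper. The paper's proof never passes through Harish--Chandra pairs: it works directly with structure sheaves, using the distinguished atlases from Definition \ref{def new-covering of a Lie supergroup} to \emph{define} $\Psi$ locally by $(\Psi_{W_i})^*(y^s_a):=pr_s\circ\psi^*(x_a)$, $(\Psi_{W_i})^*(\eta^t_b):=pr_t\circ\psi^*(\xi_b)$ (invoking Leites' theorem on morphisms of superdomains), then proving the key identity $(\Psi_{W_i})^*\bigl(pr_s\circ\Pi^*(F)\bigr)=pr_s\circ\psi^*(F)$ for all $F$ by checking it on monomials; this single identity is then used both to glue the local pieces and to verify compatibility with the multiplication morphisms. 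Your route instead reduces everything to the Lie-superalgebra universal property (Proposition \ref{p:univ_1 C}) and transports the answer back through the graded, inverse-limit Harish--Chandra equivalence; your uniqueness argument for the compatibility of $\Psi'$ with the adjoint actions is correct, and uniqueness and commutativity of the diagram do transfer back as you claim. Your approach is shorter and cleanly separates algebra from geometry; the paper's approach avoids the Harish--Chandra machinery altogether, which is deliberate, since the coordinate-based definition of a covering and essentially the same sheaf-level proof are meant to extend to arbitrary supermanifolds in \cite{RV}, where no Harish--Chandra pair exists.

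There is one step you assert without justification, and it is the only place where real work is hidden: that the coordinate-based Definition \ref{def new-covering of a Lie supergroup} makes $\p'=\Lie\mathcal P'$ a $\Z^{\geq 0}$-covering of $\g'$ in the sense of Definition \ref{def phi covering with supprt algebras}. Note that the individual $\Pi^k$ are only \emph{partial} homomorphisms (sums of sheaf maps $\Pi_c$, not morphisms of ringed spaces), so they do not directly have differentials; one must work with the inverse-limit homomorphism $\Pi$, whose differential $\Pi'=\dd_e\Pi$ does exist, and then read the condition $pr_s\circ\Pi^*(x_a)=y^s_a$ in $\mathfrak m_e/\mathfrak m_e^2$ to conclude that $\Pi'|_{\p'_s}$ is a linear bijection onto the corresponding parity component of $\g'$ (this is the converse direction of what Theorem \ref{theor F(G) is a covering of G} establishes). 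With that lemma supplied, your argument goes through.
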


\begin{proof} We split the proof into steps. 

{\bf Step 1.} We put $W_i:= \psi^{-1}(( \mathcal U_i)_0)$. Let us define first $(\Psi_{W_i})^*$ for any $i$ using coordinates from Definition \ref{def semicovering of a Lie supergroup}. We put 
    $$
   (\Psi_{W_i})^* (y_a^s ) := pr_{s} \circ \psi^* ( x_a) \in (\mathcal O_{\mcG})_s, \quad (\Psi_{W_i})^* (\eta_b^t ) := pr_{t} \circ \psi^* ( \xi_b) \in (\mathcal O_{\mcG})_t,
    $$
where $pr_q:\mathcal O_{\mcG} \to (\mathcal O_{\mcG})_q$. By \cite[Section 2.1.7, Theorema]{Leites} 
we defined a morphism $\Psi_{W_i}: (W_i,\mathcal O_{\mathcal P'}) \to \mathcal V_i$ of superdomains. 

{\bf Step 2.} The morphism $\Psi_{W_i}$ satisfies the following equality
\begin{equation}\label{eq Psi|W_i}
    (\Psi_{W_i})^* ( pr_s\circ \Pi^*(F) ) = pr_s\circ \psi^*(F),
\end{equation}
    where $s\in \Z$ and $F\in \mathcal O_{\mcG'}|_{(\mathcal U_i)_0}$. First of all assume that $F$ is a polynomial  in $(x_a,\xi_b)$. It is sufficient to assume that 
 $$    
 F= x_{a_1}\cdots x_{a_k} \xi_{b_1}\cdots \xi_{b_l} 
 $$ 
is a monomial. By Definition \ref{def new-covering of a Lie supergroup} we have
$$
\Pi^*( x_{a}) = \sum_{s=2q} y^s_a,\quad \Pi^*( \xi_{b}) = \sum_{t=2q+1} \eta^t_b.
$$
(Here for simplicity of notations we write infinite sums. We understand this sum as an inverse limit.) Further, we get
\begin{align*}
     pr_s\circ \Pi^*(x_{a_1}\cdots x_{a_k} \xi_{b_1}\cdots \xi_{b_l} ) = \sum_{\sum_i s_i+\sum_j t_j=s} y^{s_1}_{a_1}\cdots y^{s_k}_{a_k} \eta^{t_1}_{b_1}\cdots \eta^{t_l}_{b_l}.
\end{align*}
Therefore,
\begin{align*}
(\Psi_{W_i})^* (pr_s\circ \Pi^* (F)) &=\\
\sum_{\sum_i s_i+\sum_j t_j=s}& (\Psi_{W_i})^*(y^{s_1}_{a_1})\cdots (\Psi_{W_i})^*(y^{s_k}_{a_k}) (\Psi_{W_i})^* (\eta^{t_1}_{b_1})\cdots (\Psi_{W_i})^* (\eta^{t_l}_{b_l}).
\end{align*}
On the other hand,
\begin{align*}
    pr_s\circ \psi^*(F)& =  pr_s\circ \psi^*(x_{a_1}\cdots x_{a_k} \xi_{b_1}\cdots \xi_{b_l}) = \\
    \sum_{\sum_i s_i+\sum_j t_j=s}& pr_{s_1} \circ \psi^*(x_{a_1})\cdots pr_{s_k} \circ \psi^*(x_{a_k}) pr_{t_1} \circ \psi^*(\xi_{b_1})\cdots pr_{t_l} \circ \psi^*(\xi_{b_l}).
\end{align*}
Now the result follows from the definition of $\Psi_{W_i}$. If morphisms coincide on all polynomials, a standard argument, see \cite{Leites}, implies the result for any functions $F\in \mathcal O_{\mcG'}$. 

{\bf Step 3.} We have to show that  $\Psi_{W_i} = \Psi_{W_j}$ in $W_i\cap W_j$. Let $\tilde x_a= H(x_a,\xi_b)$ in $\mathcal U_i\cap \mathcal U_j$  and $\tilde y^s_a= pr_{s} \circ \psi^* ( x_a)$. We have by Step 2
\begin{align*}
    (\Psi_{W_j})^* (\tilde y^s_a) = (\Psi_{W_j})^* (pr_s\circ \Pi^*(\tilde x_a)) = (\Psi_{W_j})^* (pr_s\circ \Pi^*(H(x_a,\xi_b))) = \\
    pr_s\circ \psi^*(H(x_a,\xi_b)) =  (\Psi_{W_i})^*  (pr_s\circ \Pi^* (H(x_a,\xi_b))).
 \end{align*}
Now we can define the morphism $\Psi$ by $\Psi|_{W_i}:= \Psi_{W_i}$.
 
 {\bf Step 4.} Let us prove that $\Psi$ is a homomorphism of Lie supergroups. Denote by $\mu_{\mcG}$, $\mu_{\mcG'}$ and by $\mu_{\mathcal P}$ the multiplication morphism in $\mcG$, $\mcG$ and $\mathcal P$, respectively.  
 We have to show that 
 \begin{equation}\label{eq Psi is a  homom}
    (\Psi^*\times \Psi^*) \circ\mu_{\mathcal P}^* = \mu_{\mcG}^*\circ \Psi^*. 
 \end{equation}
 Clearly it is sufficient to show (\ref{eq Psi is a  homom}) only for coordinates $(y_a^s,\eta_b^t)$. Step $2$ implies that $$
  \Psi^* \circ pr_s\circ \Pi^* = pr_s\circ \psi^*.
 $$
 We have
 \begin{align*}
   \mu_{\mcG}^* \circ \Psi^* (y^s_a) =  \mu_{\mcG}^* \circ \Psi^* \circ pr_s\circ \Pi^*( x_a) = \mu_{\mcG}^* \circ pr_s\circ \psi^*( x_a) =  pr_s\circ \mu_{\mcG}^* \circ \psi^*( x_a)=\\
   pr_s\circ (\psi^*\times \psi^*)\circ  \mu_{\mcG'}^* ( x_a) = (\Psi^*\times \Psi^*) \circ pr_s\circ (\Pi^*\times \Pi^*)\circ \mu_{\mcG'}^* ( x_a)=\\
   (\Psi^*\times \Psi^*) \circ pr_s\circ  \mu_{\mcG}^* \circ \Pi^* ( x_a) = (\Psi^*\times \Psi^*) \circ \mu_{\mcG}^* \circ pr_s \circ \Pi^* ( x_a) =  (\Psi^*\times \Psi^*) \circ \mu_{\mcG}^*  ( y^s_a).
 \end{align*}
 For $\eta_b^t$ the proof is similar. The proof is complete.
\end{proof}

\begin{corollary}
    	If a $\Z^{\geq 0}$-covering of a Lie supergroup $\mcG$ exists, it is unique up to isomorphism.  
\end{corollary}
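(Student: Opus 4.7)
The plan is to derive uniqueness directly from the universal property established in Theorem~\ref{theor univ_Lie supergroups}, by the standard Yoneda-style argument. The only subtlety is to verify that the hypotheses of that theorem are actually met when we plug a covering into the slot of the source Lie supergroup.

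First I would observe that any $\Z^{\geq 0}$-covering $\mathcal P = \varprojlim \mathcal P_k$ carries a canonical non-negative $\Z$-grading on its structure sheaf, namely $\mathcal O_{\mathcal P} = \bigoplus_{q\geq 0} (\mathcal O_{\mathcal P})_q$ where $(\mathcal O_{\mathcal P})_q = (\mathcal O_{\mathcal P_k})_q$ for any $k\geq q$ (this is well-defined because the projections $\mathcal P_{k+1}\to \mathcal P_k$ preserve the gradings). Moreover the structure morphisms of $\mathcal P$ are $\Z$-graded since they are inverse limits of graded Lie supergroup morphisms. Hence $\mathcal P$ falls into the class of source objects to which Theorem~\ref{theor univ_Lie supergroups} applies.

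Now let $\Pi:\mathcal P\to \mcG$ and $\Pi'\colon \mathcal P'\to \mcG$ be two $\Z^{\geq 0}$-coverings of $\mcG$. Applying Theorem~\ref{theor univ_Lie supergroups} to the homomorphism $\Pi\colon \mathcal P\to \mcG$ and the covering $\mathcal P'$ yields a unique $\Z$-graded Lie supergroup homomorphism $\Psi\colon \mathcal P\to \mathcal P'$ with $\Pi'\circ \Psi = \Pi$. Symmetrically, we get a unique $\Z$-graded homomorphism $\Psi'\colon \mathcal P'\to \mathcal P$ with $\Pi\circ \Psi' = \Pi'$.

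Finally I would conclude by invoking the uniqueness clause of the universal property twice. The composition $\Psi'\circ \Psi\colon \mathcal P\to \mathcal P$ is a $\Z$-graded Lie supergroup homomorphism satisfying $\Pi\circ(\Psi'\circ \Psi)= \Pi$, and the same equation is satisfied by $\id_{\mathcal P}$. Applying Theorem~\ref{theor univ_Lie supergroups} with source $\mathcal P$, map $\psi=\Pi$, and covering $\mathcal P$ itself, the lift is unique, so $\Psi'\circ \Psi = \id_{\mathcal P}$. The symmetric argument gives $\Psi\circ \Psi' = \id_{\mathcal P'}$, and therefore $\Psi$ is an isomorphism of $\Z^{\geq 0}$-coverings. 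The only place requiring any care is the first paragraph, namely checking that $\mathcal P$ really satisfies the hypotheses of the theorem; once that is granted, the rest is the routine two-arrow uniqueness diagram.
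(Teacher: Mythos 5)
Your proposal is correct and follows essentially the same route as the paper: the paper's own proof likewise reduces uniqueness to the universal property of Theorem~\ref{theor univ_Lie supergroups} together with the standard two-arrow argument, which you simply spell out more explicitly. The one point where you are slightly too quick is the first paragraph: the theorem as stated takes as source a finite-dimensional Lie supergroup with a non-negative $\Z$-grading (or a graded Lie supergroup of a fixed degree $n$), whereas a covering $\mathcal P=\varprojlim \mathcal P_k$ is an inverse limit and so does not literally satisfy that hypothesis; the paper handles this by observing that the \emph{proof} of the theorem carries over unchanged to such inverse-limit sources, rather than by claiming the statement applies as is.
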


\begin{proof}
    In Theorem \ref{theor univ_Lie supergroups} we assume that $\mcG$ is finite dimensional. However the same argument as in the proof of  Theorem \ref{theor univ_Lie supergroups} implies that we can replace $\mcG$ by a $\Z^{\geq 0}$-covering of $\mcG$. Therefore we get the result. 
\end{proof}

If there exists a $\Z^{\geq 0}$-covering $\Pi:\mathcal P\to \mcG$ of a Lie supergroup $\mathcal G$, then from Definition \ref{def new-covering of a Lie supergroup} it follows that there exists  a $\phi: \Z\to \Z_2$-semicovering of $\mathcal G$ with support $C=\{0,1,2,\ldots,n\}$ for any $n$. Indeed, let $\mathcal P_n$ be the graded Lie supergroup of degree $n$ as in Definition \ref{def new-covering of a Lie supergroup}.  
Then by Definition \ref{def new-covering of a Lie supergroup}, $\Pi^n$ is a partial homomorphism. Further, for any Lie supergroups homomorphism $\psi: \mcG'\to \mcG$, where $\mcG'$ is a graded Lie supergroup of degree $n$, there exists unique Lie supergroup homomorpism (not a partial homomorphism!) $\Psi_n: \mcG'\to  \mathcal P_n$ such that $pr_s\circ\psi^*= \Psi^*\circ \Pi_s$. 

We finalize this section with the following proposition.  

\begin{proposition}
Let $\mcG$, $\tilde{\mcG}$ be Lie supergroups and $\mathcal P$, $\tilde{\mathcal P}$ be their $\Z^{\geq 0}$-coverings, respectively. Let $\psi:\mcG\to \tilde{\mcG}$ be a homomorphism of Lie supergroups. Then there exists  unique Lie supergroup homomorphism $\Psi$ of $\Z^{\geq 0}$-coverings $\mathcal P$ to $\tilde{\mathcal P}$ such that the following diagram is commutative
$$
\begin{tikzcd}
\mathcal P \arrow[r, "\Psi"] \arrow["\Pi", d]
& \tilde{\mathcal P} \arrow[d, "\Pi" ] \\
\mcG\arrow[r,  "\psi" ]
& \tilde{\mcG}
\end{tikzcd}
$$
\end{proposition}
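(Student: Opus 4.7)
The plan is to reduce the statement to a direct application of Theorem \ref{theor univ_Lie supergroups}. The key observation is that a $\Z^{\geq 0}$-covering $\mathcal P = \varprojlim \mathcal P_n$ is itself a Lie supergroup carrying a canonical non-negative $\Z$-grading on its structure sheaf, inherited from the graded Lie supergroups $\mathcal P_n$ of degree $n$, and all structure morphisms respect this grading. Thus $\mathcal P$ falls into the class of source objects to which Theorem \ref{theor univ_Lie supergroups} applies.

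First, I would form the composition $\psi \circ \Pi : \mathcal P \to \tilde{\mcG}$, which is a homomorphism of Lie supergroups. Since $\mathcal P$ carries an additional non-negative $\Z$-grading and $\tilde{\mathcal P}$ is a $\Z^{\geq 0}$-covering of $\tilde{\mcG}$, Theorem \ref{theor univ_Lie supergroups} applies with $\mcG$ replaced by $\mathcal P$, $\mcG'$ replaced by $\tilde{\mcG}$, and $\psi$ replaced by $\psi \circ \Pi$. It yields a unique $\Z$-graded Lie supergroup homomorphism $\Psi : \mathcal P \to \tilde{\mathcal P}$ satisfying $\tilde\Pi \circ \Psi = \psi \circ \Pi$, which is exactly the commutativity of the diagram.

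For uniqueness, suppose $\Psi_1, \Psi_2 : \mathcal P \to \tilde{\mathcal P}$ both make the diagram commute. Then both equal the unique lift of $\psi \circ \Pi$ guaranteed by Theorem \ref{theor univ_Lie supergroups}, so $\Psi_1 = \Psi_2$.

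The main technical point — really the only one that requires a brief verification — is that Theorem \ref{theor univ_Lie supergroups} can indeed be applied with $\mathcal P$ as its source. Strictly speaking, the theorem as stated allows a Lie supergroup with an additional non-negative $\Z$-grading, and $\mathcal P$ is such an object because of the description $\mathcal O_{\mathcal P} = \bigoplus_{q \geq 0} (\mathcal O_{\mathcal P})_q$ compatible with multiplication, as used in the proof of Theorem \ref{theor univ_Lie supergroups} itself (the projections $\mathrm{pr}_s$ are well-defined on $\mathcal O_{\mathcal P}$ and on $\mathcal O_{\tilde{\mathcal P}}$). No further obstruction arises, since $\psi\circ\Pi$ is an honest Lie supergroup homomorphism, not merely a partial one. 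Hence the proposition follows immediately by this lifting argument.
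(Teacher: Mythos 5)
Your proof is correct and is essentially the paper's own argument: the paper defines $\Psi$ locally by $\Psi^*(\tilde y^s_a) := pr_s\circ\Pi^*\circ\psi^*(\tilde x_a)$ and $\Psi^*(\tilde\eta^t_b) := pr_t\circ\Pi^*\circ\psi^*(\tilde\xi_b)$, which is precisely the lift of $\psi\circ\Pi$ produced by the proof of Theorem \ref{theor univ_Lie supergroups}, and then states that the remaining verification repeats that proof. The only point worth flagging is that $\mathcal P$ is an inverse limit and hence infinite dimensional, so strictly one reruns the argument of Theorem \ref{theor univ_Lie supergroups} rather than quoting its statement verbatim --- the same caveat the paper itself makes in the corollary on uniqueness of $\Z^{\geq 0}$-coverings.
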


\begin{proof}
  Locally  we put
    \begin{align*}
        \Psi^*(\tilde y^s_a) := pr_s\circ \Pi^*\circ \psi^*(\tilde x_a),\quad \Psi^*(\tilde \eta^t_b) := pr_t\circ \Pi^*\circ \psi^*(\tilde \xi_b).
    \end{align*}
The rest of the proof is similar to the proof of Theorem (\ref{theor univ_Lie supergroups}). 
\end{proof}

Later we will prove that such a covering exists for any Lie supergroup. More precisely we will show that $\mathrm{F}(\mcG)$ is a covering of a Lie supergroup $\mcG$.

\section{Existence of a $\mathbb Z^{\geq 0}$-covering of a Lie superalgebra and\\ a Lie supergroup}\label{sec Z-covering, Lie superalgebra and Lie supergroup}

In this section we will show that the $\mathrm{F}'$-image of a Lie superalgebra $\mathfrak g$ and the $\mathrm{F}$-image a Lie supergroup $\mcG$ are $\Z^{\geq0}$-coverings of $\mathfrak g$ and $\mcG$, respectively. We start with a Lie superalgebara.

\subsection{A $\mathbb Z^{\geq 0}$-covering of a Lie superalgebra}
In this section we will show that the Lie superalgebra $\mathfrak p:= \mathrm{F}'(\mathfrak g)$ is a $\mathbb Z^{\geq 0}$-covering  of $\mathfrak g$, see Definition \ref{def phi covering with supprt algebras}.
We are ready to prove the following theorem.

\begin{theorem}
For any Lie superalgebra $\mathfrak g$ the Lie superalgebra $\mathfrak p= \mathrm{F}'(\mathfrak g)$ is a $\mathbb Z^{\geq0}$-covering of $\mathfrak g$. 
\end{theorem}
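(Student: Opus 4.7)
The plan is to construct a surjective graded homomorphism $\Pi' : \mathfrak p \to \mathfrak g$ whose restriction to each homogeneous component $\mathfrak p_n$ is a linear bijection onto $\mathfrak g_{\bar n}$, where $\phi : \Z \to \Z_2$ is the natural reduction and $\mathfrak p = \mathrm F'(\mathfrak g)$. The key observation is that the very definition of $\mathfrak p_n$ as the ``diag'' of $\bigoplus_{C(I)=n-1} \mathrm d_I(\mathfrak g_{\bar n}) \oplus \bigoplus_{C(J)=n} \mathrm d_J(\mathfrak g_{\bar n})$ provides, for each $X \in \mathfrak g_{\bar n}$, a canonical lift
$$
\tilde X := \sum_{C(I)=n-1} \mathrm d_I(X) + \sum_{C(J)=n} \mathrm d_J(X) \in \mathfrak p_n
$$
(interpreted as an element of the inverse limit $\varprojlim \mathrm F'_n(\mathfrak g)$ in accordance with Section \ref{rem direct lim of F'}), and this assignment is a linear isomorphism $\mathfrak g_{\bar n} \xrightarrow{\sim} \mathfrak p_n$.

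The essential computation is already carried out inside the proof of Proposition \ref{prop p is Lie subalgebra}: for $X \in \mathfrak g_{\bar i}$ and $Y \in \mathfrak g_{\bar j}$ one has
$$
[\tilde X, \tilde Y] = \binom{i+j}{i}\, \widetilde{[X,Y]}.
$$
Because of the combinatorial factor $\binom{i+j}{i}$, the naive choice $\tilde X \mapsto X$ fails to intertwine the brackets. The remedy is a factorial rescaling: define $\Pi'_n : \mathfrak p_n \to \mathfrak g_{\bar n}$ by $\Pi'_n(\tilde X) := \tfrac{1}{n!}\, X$, which is legitimate in characteristic zero and is a linear bijection because $n!$ is invertible. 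Setting $\Pi' := \bigoplus_{n\ge 0} \Pi'_n$, one checks
$$
\Pi'([\tilde X,\tilde Y]) = \binom{i+j}{i}\, \frac{[X,Y]}{(i+j)!} = \frac{[X,Y]}{i!\, j!} = \left[\tfrac{X}{i!}, \tfrac{Y}{j!}\right] = [\Pi'\tilde X, \Pi'\tilde Y],
$$
so $\Pi'$ is a homomorphism of Lie superalgebras. It is surjective since every $\Pi'_n$ is bijective, and by construction $\phi(n) = \bar n$ is the parity of $\Pi'_n(\mathfrak p_n)$, matching Definition \ref{def phi covering with supprt algebras}.

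The main obstacle is pinning down the correct normalization: the diagonal identification $\mathfrak g_{\bar n} \cong \mathfrak p_n$ is only compatible with the brackets up to the combinatorial factor $\binom{i+j}{i}$, and without the factorial rescaling the map fails to be a Lie superalgebra homomorphism. One should also verify that all displayed (possibly infinite) sums make sense by passing through the inverse limit presentation $\mathrm F'(\mathfrak g) = \varprojlim_n \mathrm F'_n(\mathfrak g)$; at finite stage $n$ only finitely many $\mathrm d_i$'s occur, so each $\Pi'|_{\mathrm F'_n(\mathfrak g)}$ is a bona fide finite map, and the limit $\Pi'$ is well-defined and compatible with the projections. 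Together with Proposition \ref{prop p is Lie subalgebra}, which guarantees that $\mathfrak p$ is a $\Z$-graded Lie subsuperalgebra with $\mathrm{supp}(\mathfrak p) = \Z^{\geq 0}$, this shows that $(\mathfrak p, \Pi')$ satisfies all the requirements of Definition \ref{def phi covering with supprt algebras} for the homomorphism $\phi : \Z \to \Z_2$ with support $C = \Z^{\geq 0}$.
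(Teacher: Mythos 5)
Your proposal is correct and follows essentially the same route as the paper: the paper also defines $\Pi'$ on the canonical lift $\sum_{\sharp I=i-1}\mathrm{d}_I(X)+\sum_{\sharp J=i}\mathrm{d}_J(X)\mapsto \tfrac{1}{i!}X$ and verifies the homomorphism property via the binomial identity $\binom{i+j}{i}/(i+j)! = 1/(i!\,j!)$ extracted from the proof of Proposition \ref{prop p is Lie subalgebra}. Your write-up merely makes the normalization issue and the inverse-limit bookkeeping more explicit than the paper does.
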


\begin{proof}
	In notations of Proposition \ref{prop p is Lie subalgebra} the homomorphism $\Pi': \mathfrak p\to \mathfrak g$ is defined as follows
	$$
	\Pi'(\sum_{\sharp I=i-1} \mathrm{d}_I(X) + \sum_{\sharp J=i} \mathrm{d}_J(X)) = \frac{1}{i!} X \in \mathfrak g_{\bar i}.
	$$
	Denote $Z':= \sum\limits_{\sharp I=i-1} \mathrm{d}_I(Z) +  \sum\limits_{\sharp J=i} \mathrm{d}_J(Z)\in \mathfrak p_i$ for any $Z\in \mathfrak g_{\bar i}$.
	Using arguments from the proof of Proposition \ref{prop p is Lie subalgebra} we see that
	$$
	\Pi'([X',Y'])= \frac{1}{i!j!}[X,Y]= [\Pi'(X'), \Pi'(Y')]
	$$
	for any $X'\in \mathfrak p_i$ and $Y'\in \mathfrak p_j$. Clearly $\Pi'|_{\mathfrak p_{n}}: \mathfrak p_{n}\to \mathfrak g_{\bar n}$ is a bijective linear map for any $n\in \mathbb Z^{\geq 0}$.
	\end{proof}

In Proposition \ref{p:univ_1 C} we proved that if $f:\mathfrak g\to \mathfrak g'$ is a homomorphism of Lie algebras, then there exists unique Lie algebra homomorphism $\bar f: \mathfrak p\to \mathfrak p'$ such that $ f\circ \Pi' =  \Pi'\circ\bar f$. Now we can construct $\bar f$ explicitly. We put $\bar f= \mathrm F'(f)$. 

We conclude this section with the following observation. 

\begin{proposition}
    The Lie superalgebra $\mathrm F'_n (\g)$ is  a semicovering with support $\{0,\ldots,n\}$ of the Lie superalgebra $\g$  along the homomorphism $\mathbb Z\to \mathbb Z_2$. 
\end{proposition}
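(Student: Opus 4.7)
The plan is to adapt the proof of the immediately preceding theorem (that $\F'(\g)=\F'_{\infty}(\g)$ is a $\Z^{\geq 0}$-covering of $\g$) to the finite setting, tracking what changes when only finitely many de~Rham differentials are available. By the inverse-limit description in Section~\ref{rem direct lim of F'}, the graded components of $\F'_n(\g)$ are given by the same ``diagonal'' formula as in Proposition~\ref{prop p is Lie subalgebra}, but with the index sets $I,J$ restricted to the finitely many available differentials, and every component of degree greater than $n$ set to zero.

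First I would fix notation: for $k\in\{0,\ldots,n\}$ and $X\in\g_{\bar k}$, set
$$
X'_k\;:=\;\sum_{C(I)=k-1}\dd_I(X)\;+\;\sum_{C(J)=k}\dd_J(X),
$$
so that $X\mapsto X'_k$ is a linear bijection from $\g_{\bar k}$ onto the degree-$k$ component $\mathfrak p_k$ of $\F'_n(\g)$. This immediately gives $\mathrm{supp}(\F'_n(\g))=\{0,\ldots,n\}$, and $\phi(\{0,\ldots,n\})=\Z_2$ for $n\geq 1$. I would then define the candidate projection by the same formula as in the infinite case,
$$
\Pi'\colon\F'_n(\g)\longrightarrow\g,\qquad \Pi'(X'_k):=\tfrac{1}{k!}\,X\in\g_{\bar k},
$$
so that each restriction $\Pi'|_{\mathfrak p_k}$ is a linear bijection onto $\g_{\bar k}$, and surjectivity of $\Pi'$ onto $\g=\g_{\bar 0}\oplus\g_{\bar 1}$ is immediate from the components $k=0,1$.

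The crux is the partial-homomorphism property. For $X\in\g_{\bar i}$, $Y\in\g_{\bar j}$ with $i,j,i+j\in\{0,\ldots,n\}$, I would replay the bracket computation from the end of the proof of Proposition~\ref{prop p is Lie subalgebra}: restricting the four arising sums to the available index sets does not disturb the Pascal identity $\binom{i+j-1}{i-1}+\binom{i+j-1}{i}=\binom{i+j}{i}$, and one again obtains $[X'_i,Y'_j]=\binom{i+j}{i}\,[X,Y]'_{i+j}$. Applying $\Pi'$ to both sides gives $\Pi'([X'_i,Y'_j])=[X,Y]/(i!\,j!)=[\Pi'(X'_i),\Pi'(Y'_j)]$, which is precisely the required condition on the support $\{0,\ldots,n\}$. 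For $i,j\leq n$ with $i+j>n$ the same formula forces $[X'_i,Y'_j]=0$, because no subset of the available differentials of cardinality $\geq i+j-1$ exists; thus $\F'_n(\g)$ is closed under the bracket with the prescribed support, and no condition on $\Pi'$ is imposed outside that support.

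The only subtle point, which I would expect to be the main technical obstacle, is the boundary degree $k=n$: here the sum $\sum_{C(J)=n}\dd_J(X)$ vanishes, so $\mathfrak p_n$ reduces to a single summand and one must check that the bracket formula for $i+j=n$ still produces the correct coefficient $\binom{n}{i}$ in front of that lone surviving term. This amounts to applying Pascal's identity in a degenerate range (with one binomial absent), and once verified, all clauses of the definition of a $(\Z\to\Z_2)$-semicovering with support $\{0,\ldots,n\}$ are fulfilled.
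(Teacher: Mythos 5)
Your proof is correct, and it follows what is evidently the intended route: the paper itself states this proposition without proof, treating it as immediate from the inverse-limit description of $\mathrm F'_n$ and the preceding theorem that $\mathrm F'(\g)$ is a $\Z^{\geq 0}$-covering, which is exactly the argument you reconstruct (same diagonal elements $X'_k$, same projection $\Pi'(X'_k)=\frac{1}{k!}X$, same binomial computation). Your identification and verification of the one genuinely new point in the finite setting --- that in top degree $i+j=n$ the fourth bracket term and the second summand of $[X,Y]'_n$ both vanish while Pascal's identity still yields the coefficient $\binom{n}{i}$ on the single surviving term, and that $[\p_i,\p_j]=0$ for $i+j>n$ because no disjoint index sets of total cardinality $i+j-1$ exist --- is accurate and is the only detail the paper's terse treatment leaves unaddressed.
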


\subsection{A $\mathbb Z^{\geq 0}$-covering of a Lie supergroup}
Let $\mcG$ be a Lie supergroup with the Lie superalgebra $\mathfrak g$ and $\mathcal P$ be a graded Lie supergroup with the graded Lie superalgebra $\mathfrak p$ and with $\mathcal P_0=\mcG_0$. We put $\mathfrak p_{(1)}:= \mathfrak p_{1}\oplus \mathfrak p_{2} \oplus \cdots$.  By the Poincar\'{e}–Birkhoff–Witt  theorem we
have the following sheaf isomorphisms
\begin{equation}\label{eq sheaf is isomorphic to PBW}
\begin{split}
    \mathcal O_{\mcG} &= \mathrm{Hom}_{\mathcal U(\mathfrak g_{\bar 0})} (\mathcal U(\mathfrak g), \mathcal F_{\mcG_0}) \simeq  \mathrm{Hom}_{\mathbb C} (S^*(\mathfrak g_{\bar 1}), \mathcal F_{\mcG_0}) \simeq S^*(\mathfrak g^*_{\bar 1})\otimes_{\mathbb C} \mathcal F_{\mcG_0}, \\ 
    (\mathcal O_{\mathcal P})_q &= \mathrm{Hom}_{\mathcal U(\mathfrak p_{0})} (\mathcal U(\mathfrak p)_q, \mathcal F_{\mathcal P_0}) \simeq  \mathrm{Hom}_{\mathbb C} (S^q(\mathfrak p_{(1)}), \mathcal F_{\mathcal P_0})\simeq \\
    &\simeq (S^q(\mathfrak p_{(1)}))^*\otimes_{\mathbb C} \mathcal F_{\mcG_0},
\end{split}
\end{equation}
where $q\in \Z^{\geq 0}$. The isomorphism (\ref{eq sheaf is isomorphic to PBW}) leads to a certain choice of global odd and global graded coordinates on $\mcG$ and $\mathcal P$, respectively.  Indeed, let $(\xi_i)$ be a basis in $\mathfrak g_{\bar 1}^*$ and let  $(\zeta^q_b)$ be a basis in $\mathfrak p_{q}^*$, where $q \geq 1$.  Since $\mathfrak g_{\bar 1}^*$ is a direct term in $S^*(\mathfrak g^*_{\bar 1})$, we assume that $\xi_i\in S^*(\mathfrak g^*_{\bar 1})$, and similarly  $\zeta^q_b\in S^q(\mathfrak p_{(1)}))^*$. Then the system $(\xi_i)$ is a system of global odd coordinates on $\mcG$, and 
$(\zeta^q_b)$, where $q\geq 1$ is the degree of the coordinate $\zeta^q_b$, is a system of global graded coordinates of $\mathcal P$. We will call such coordinates of a (graded) Lie supergroup {\it standard}. Note that in general we cannot choose global coordinates of degree $0$.

We  need the following two lemmas. 
\begin{lemma}\label{lem functions of degree p}
Let us fix a graded chart $\mathcal U$ on a graded Lie supergroup $\mathcal P$ with standard graded coordinates $(\zeta^q_b)$, where $q\geq 1$. Denote by $\theta^q_c$ the basis in $\mathfrak p_{q}$ dual to the basis $(\zeta^q_b)$ of $\mathfrak p_{q}^*$. Let $f\in (\mathcal O_{\mathcal U})_p$ be a graded function of degree $p$. If $f(\theta^p_c) = 0$ for any $c$, then $f$ is a sum of products of coordinates of degree $p'<p$. 
\end{lemma}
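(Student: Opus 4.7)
The plan is to use the explicit PBW-type identification
$$
(\mathcal O_{\mathcal U})_p \;\simeq\; \mathcal F_{\mathcal P_0}\big|_{\mathcal U_0} \otimes_{\mathbb C} \bigl(S^p(\mathfrak p_{(1)})\bigr)^{*},
$$
stated in \eqref{eq sheaf is isomorphic to PBW}, where $S^p(\mathfrak p_{(1)})$ denotes the weight-$p$ component of the super-symmetric algebra on $\mathfrak p_{(1)} = \mathfrak p_1\oplus \mathfrak p_2\oplus \cdots$ (elements of $\mathfrak p_k$ carrying weight $k$ and parity $\bar{k}$). The key point is that the linear piece $\mathfrak p_p \subset S^1(\mathfrak p_{(1)})_p$ sits inside the polynomial-degree-$1$ part of $S^p(\mathfrak p_{(1)})$, so evaluation at $\theta^p_c$ annihilates everything of polynomial degree $\geq 2$ in the graded coordinates.

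First, I would write the canonical coordinate expansion
$$
f \;=\; \sum_{b} f_b(x)\,\zeta^p_b \;+\; \sum_{\substack{k\geq 2 \\ q_1+\cdots+q_k=p}} f_{(q_1,b_1),\dots,(q_k,b_k)}(x)\,\zeta^{q_1}_{b_1}\cdots \zeta^{q_k}_{b_k},
$$
where every $q_i \geq 1$. Because $k\geq 2$ forces each $q_i < p$ in the second sum, it suffices to show that the hypothesis kills the first (linear) sum. Under the PBW-isomorphism, the coordinate $\zeta^q_b$ is identified with the dual element to $\theta^q_b$ in $\mathfrak p_q^{*} \subset S(\mathfrak p_{(1)})^{*}$, hence $\zeta^p_b(\theta^p_c) = \delta_{bc}$.

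Second, I would check that every monomial $\zeta^{q_1}_{b_1}\cdots \zeta^{q_k}_{b_k}$ with $k\geq 2$ evaluates to zero on $\theta^p_c$. This is because such a monomial is, via the PBW map, the dual to a super-symmetric tensor of polynomial degree $k\geq 2$, while $\theta^p_c$ lies in the polynomial-degree-$1$ subspace $\mathfrak p_p$ of $S^p(\mathfrak p_{(1)})$; the pairing between an element of polynomial degree $k$ and one of polynomial degree $1$ vanishes. Combining the two observations yields
$$
f(\theta^p_c) \;=\; f_c(x) \quad \text{in } \mathcal F_{\mathcal P_0}\big|_{\mathcal U_0}.
$$

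Therefore the assumption $f(\theta^p_c)=0$ for every $c$ forces $f_c(x)\equiv 0$ for every $c$, so the linear-in-$\zeta^p$ part of the expansion vanishes identically and $f$ reduces to a finite sum of products of graded coordinates $\zeta^q_b$ with $q<p$ (multiplied by functions of the $x$-variables, which have degree $0<p$). The main technical obstacle I anticipate is making the identification of $\zeta^q_b$ with the PBW dual basis precise, in particular tracking the super-signs coming from the symmetrization map $S(\mathfrak p_{(1)}) \xrightarrow{\sim} \mathcal U(\mathfrak p)$; however, these signs affect only the normalization of the coordinate functions, not the statement that products of $\geq 2$ coordinates annihilate linear elements, so the argument goes through without change.
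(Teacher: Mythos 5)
Your proof is correct and follows essentially the same route as the paper: the paper's (very terse) proof likewise reads off from the PBW identification that the degree-$p$ part of the structure sheaf splits into the span of the linear coordinates $\zeta^p_b$ plus products of lower-degree coordinates, so that evaluation at $\theta^p_c$ extracts the coefficient of $\zeta^p_c$. The only point the paper adds that you omit is the closing remark that the conclusion then holds with respect to any other graded coordinate system (needed for the application in Theorem \ref{theor F(G) is a covering of G}), which follows since a change of coordinates expresses each coordinate of degree $q<p$ through products of coordinates of degree $\le q<p$.
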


\begin{proof}
    By definition of standard coordinates,  $f$ is  a sum of products of coordinates of degree $p'<p$ with respect to the coordinates $(\zeta^q_b)$. Therefore the same holds for any other coordinates. 
\end{proof}

\begin{lemma}\label{lem change of coord}
Let $\mathcal U$ be a graded superdomain with graded coordinates $(\theta^q_b)$, where $q\in \Z$ is the degree of a coordinate. The system $(\tilde{\theta}^q_b)$, where
\begin{align*}
    \tilde{\theta}^q_b = \theta^q_b + \sum_{q_1+\cdots q_k=q} f_{i_1\ldots i_k} \theta^{q_1}_{i_1} \cdots \theta^{q_k}_{i_k}, \quad f_{i_1\ldots i_k}= f_{i_1\ldots i_k}(\theta^{0}_{a}),
\end{align*}
is a new graded coordinate system in $\mathcal U$. 
\end{lemma}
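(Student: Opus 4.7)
The plan is to show that the assignment $\theta^q_b \mapsto \tilde\theta^q_b$ extends to an automorphism of the structure sheaf of $\mathcal U$ as a sheaf of $\Z$-graded supercommutative algebras over $\mathcal F_{\mathcal U_0}$; this is exactly what is required for $(\tilde\theta^q_b)$ to be a new system of graded coordinates.

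First, I would verify well-definedness. Each $\tilde\theta^q_b$ has total $\Z$-degree $q$, because every summand on the right-hand side has total degree $q_1+\cdots+q_k=q$, and its parity matches that of $\theta^q_b$ since the parity homomorphism $\chi$ is additive. So the prescription defines a degree- and parity-preserving assignment on the generating coordinates, hence extends uniquely to a homomorphism $\Phi$ of graded supercommutative algebras.

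Next, I would construct an inverse for $\Phi$ by induction on $q$. Reading the sum as running over $k\geq 2$ with $q_i\geq 1$ (so that the perturbation is genuinely nonlinear in the positive-degree coordinates), every summand involves only coordinates of degree strictly less than $q$. In particular, for $q\in\{0,1\}$ the sum is empty and $\tilde\theta^q_b=\theta^q_b$; so the degree-$0$ coordinates $\theta^0_a$ are fixed by $\Phi$, and the coefficient functions $f_{i_1\ldots i_k}(\theta^0_a)$ may equivalently be viewed as functions of $\tilde\theta^0_a$. For $q\geq 2$, I rewrite the defining relation as
\[
\theta^q_b \;=\; \tilde\theta^q_b \;-\; \sum_{q_1+\cdots+q_k=q} f_{i_1\ldots i_k}(\tilde\theta^0_a)\,\theta^{q_1}_{i_1}\cdots\theta^{q_k}_{i_k},
\]
and substitute the inductive expressions of each $\theta^{q_i}_{i_j}$ (with $q_i<q$) as a polynomial in the $\tilde\theta$'s. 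This produces $\theta^q_b$ as a polynomial in $(\tilde\theta^{q'})_{q'\leq q}$ with coefficients in $\mathcal F_{\mathcal U_0}$, and this substitution process terminates because the degrees strictly decrease at each step.

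The two mutually inverse, degree- and parity-preserving algebra homomorphisms so constructed identify the graded $\mathcal F_{\mathcal U_0}$-algebra generated by the $\theta$'s with the one generated by the $\tilde\theta$'s. Hence $(\tilde\theta^q_b)$ is a graded coordinate system on $\mathcal U$. The only point requiring care is fixing the indexing convention on the sum ($k\geq 2$ and $q_i\geq 1$, so that every $q_i<q$); once this is made explicit the induction is formal and no graded inverse function theorem is needed. The main obstacle, such as it is, is bookkeeping: tracking the iterated polynomial substitutions, which remains harmless because for each fixed $q$ only finitely many coordinates of degree $\leq q$ enter.
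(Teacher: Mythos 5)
Your proof is correct and follows exactly the route the paper intends: its entire proof is the phrase ``By induction,'' and your degree-by-degree substitution argument (with the observation that each correction term involves only coordinates of strictly lower degree, under the reading $k\geq 2$, $q_i\geq 1$) is precisely the induction being alluded to. Your explicit flagging of that indexing convention is a useful clarification but not a departure from the paper's approach.
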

\begin{proof}
    By induction. 
\end{proof}

Now we are ready to prove the main result of this section. 

\begin{theorem}\label{theor F(G) is a covering of G}
For any Lie supergroup $\mathcal G$ the image $\mathrm F(\mathcal G)$ is a $\mathbb Z^{\geq 0}$-covering of $\mathcal G$.
\end{theorem}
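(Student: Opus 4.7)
The plan is to use the Harish-Chandra correspondence to lift the Lie superalgebra homomorphism $\Pi':\mathrm{F}'(\mathfrak g)\to\mathfrak g$ from the previous subsection to the required Lie supergroup homomorphism $\Pi:\mathrm{F}(\mathcal G)\to\mathcal G$. For each $n$, the surjective Lie superalgebra homomorphism $\Pi':\mathrm{F}'_n(\mathfrak g)\to\mathfrak g$ induces a Hopf algebra homomorphism $\mathcal U(\Pi'):\mathcal U(\mathrm{F}'_n(\mathfrak g))\to\mathcal U(\mathfrak g)$ which is $\mathbb Z$-graded if we equip $\mathcal U(\mathfrak g)$ with the trivial $\mathbb Z$-grading concentrated in degree $0$ (while remembering its $\mathbb Z_2$-grading). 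Using the Harish-Chandra description of the structure sheaves, I would then define the comorphisms $\Pi^*_c:\mathcal O_{\mathcal G}\to(\mathcal O_{\mathrm{F}_n(\mathcal G)})_c$ by precomposition:
\begin{equation*}
\Pi^*_c(f)(u):=f(\mathcal U(\Pi')(u)),\qquad f\in\mathcal O_{\mathcal G},\ u\in\mathcal U(\mathrm{F}'_n(\mathfrak g))_c.
\end{equation*}

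Next I would verify that the collection $\Pi^n:=\bigoplus_c\Pi^*_c$ is a partial homomorphism in the sense of Definition \ref{def semicovering of a Lie supergroup}. The multiplicativity condition $\Pi^*_c(fg)=\sum_{c_1+c_2=c}\Pi^*_{c_1}(f)\cdot\Pi^*_{c_2}(g)$ is a direct consequence of the fact that $\mathcal U(\mathrm{F}'_n(\mathfrak g))$ is a graded Hopf algebra and $\mathcal U(\Pi')$ intertwines the coproducts. Compatibility with the group multiplication, $\nu^*\circ\Pi^*_c=\sum_{c_1+c_2=c}(\Pi^*_{c_1}\times\Pi^*_{c_2})\circ\mu^*$, follows by applying formulas \eqref{eq umnozh in supergroup} on both sides, using that $\mathcal U(\Pi')$ commutes with the antipode and with the action $\alpha$ (since $\Pi'$ is $\mathfrak g_{\bar 0}$-equivariant by construction).

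The main work lies in checking the local coordinate condition. I would pick a cover $\{(\mathcal U_i)_0\}$ of $\mathcal G_0=\mathrm{F}(\mathcal G)_0$ with local coordinates $x_a$, and use the PBW isomorphism \eqref{eq sheaf is isomorphic to PBW} to obtain \emph{global} odd coordinates $\xi_b$ on $\mathcal G$ dual to a basis of $\mathfrak g_{\bar 1}$. Then I would set $y_a^s:=\Pi^*_s(x_a)$ and $\eta_b^t:=\Pi^*_t(\xi_b)$ on $\mathcal V_i:=\Pi^{-1}(\mathcal U_i)$ and show that these form a system of graded coordinates. The key computation is to evaluate the proposed coordinate $\eta_b^t$ on the diagonal basis element $Y':=\sum_{C(I)=t-1}\mathrm d_I(Y)+\sum_{C(J)=t}\mathrm d_J(Y)\in\mathrm{F}'_n(\mathfrak g)_t$, for $Y\in\mathfrak g_{\bar 1}$ (and similarly for $y_a^s$): by the explicit formula $\Pi'(Y')=\tfrac1{t!}Y$ we obtain $\eta_b^t(Y')=\tfrac1{t!}\xi_b(Y)\neq 0$ whenever $\xi_b(Y)\neq 0$. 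By Lemma \ref{lem functions of degree p} this shows that $\eta_b^t$ differs from a standard global graded coordinate $\zeta_b^t$ only by a sum of products of lower-degree coordinates, so Lemma \ref{lem change of coord} permits us to replace $\zeta_b^t$ by $\eta_b^t$. The case of $y_a^s$ requires a parallel but more delicate argument, since $x_a$ is only defined locally; the computation proceeds by evaluating on the analogous diagonal lift of a basis element of $\mathfrak g_{\bar 0}$ (which acts on $\mathcal F_{\mathcal G_0}$ as a right-invariant vector field, giving non-zero output on a generic coordinate chart).

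Finally, by construction the maps $\Pi^n$ are compatible with the natural projections $\mathrm{F}_{n+1}(\mathcal G)\to\mathrm{F}_n(\mathcal G)$, so $\Pi:=\varprojlim\Pi^n:\mathrm{F}(\mathcal G)\to\mathcal G$ is a well-defined Lie supergroup homomorphism, and each $\Pi^n$ is a semicovering with support $\{0,1,\ldots,n\}$, yielding the $\mathbb Z^{\geq 0}$-covering by Definition \ref{def new-covering of a Lie supergroup}. The main obstacle is the coordinate verification, in particular for the even coordinates $x_a$, because the Harish-Chandra machinery does not present $\Pi^*_s(x_a)$ as an element of a nice basis; one needs to invoke the lemmas about graded coordinate changes to compensate for the fact that the construction only produces coordinates up to lower-order corrections.
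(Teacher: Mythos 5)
Your proposal is correct and follows essentially the same route as the paper: defining $\Pi$ by precomposition with $\mathcal U(\Pi')$ on the Harish--Chandra level, taking $y_a^s = pr_s\circ\Pi^*(x_a)$, $\eta_b^t = pr_t\circ\Pi^*(\xi_b)$, and verifying these are graded coordinates by evaluating on the diagonal basis elements and invoking Lemmas \ref{lem functions of degree p} and \ref{lem change of coord}. The only place the paper is more explicit than you are is the even-coordinate step, where it records that the matrix $(\tilde Z_i(x_j))_{\red}$ is invertible and multiplies by its inverse before applying the two lemmas; this is exactly the precise form of the ``non-zero output'' observation you gesture at.
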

\begin{proof}
Denote $\mathcal P:= \mathrm F(\mathcal G)$. We have a natural homomorphism of Lie supergroups $\Pi: \mathcal P\to \mcG$ given by 
\begin{align*}
    &\Pi^*:\mathrm{Hom}_{\mathcal U(\mathfrak g_{\bar 0})} (\mathcal U(\mathfrak g), \mathcal F_{\mcG_0})\to  \mathrm{Hom}_{\mathcal U(\mathfrak p_{0})} (\mathcal U(\mathfrak p), \mathcal F_{\mathcal G_0}),\\
    &\Pi^*(f) (X)= f(\Pi'(X)), \quad  f\in \mathrm{Hom}_{\mathcal U(\mathfrak g_{\bar 0})} (\mathcal U(\mathfrak g), \mathcal F_{\mcG_0}),\,\, X\in \mathcal U(\mathfrak p),
\end{align*}
where $\Pi': \mathfrak p\to \mathfrak g$ is the differential of $\Pi$. (With the same letter we denote the corresponding homomorphism of the universal enveloping algebras.) Let us choose an atlas $\mathcal A$ on $\mcG$ and let us fix $\mathcal U\in \mathcal A$  with coordinates $(x_a, \xi_b)$, where $\xi_b$ are standard global odd coordinates. 

Let us choose a basis $Z_1,\ldots, Z_n$ in $\mathfrak g_{\bar 0}$ and a basis $T_1,\ldots, T_m$ in $\mathfrak g_{\bar 1}$, dual to the basis $(\xi_b)$. Let $Z^s_1,\ldots, Z^s_n$ is the corresponding basis in $\mathfrak p_{s}$, where $s$ is even,  and $T^t_1,\ldots, T^t_m$ is the corresponding basis in $\mathfrak p_{s}$, where $t$ is odd. Further let us identify $x_a$ with elements in $X_a\in \mathcal O_{\mcG}$, where
$$
X_a (1)=x_a, \quad X_a(T_j) =0,\quad   X_a(Z_j) = \tilde Z_j  (x_a)_{red}.
$$
Here $\tilde Z_j$ is the fundamental vector field on $\mcG$ corresponding to $Z_j$ and $\tilde Z_j (x_a)_{red}$  is the image of the function $\tilde Z_j (x_a)$ via the natural projection $\mathcal O_{\mcG} \to \mathcal F_{\mcG_0}$.

Let us define an atlas $\{\mathcal V_i\}$ as in Definition \ref{def new-covering of a Lie supergroup} and \ref{def semicovering of a Lie supergroup}. We put 
\begin{equation}\label{eq atlas F(G)}
    y^s_a:= pr_s\circ \Pi^*(x_a), \quad  \eta^t_b:= pr_t\circ \Pi^*(\xi_b)
\end{equation}
for $s$ even and $t$ odd. Let us check that $(x_a, y^s_a, \eta^t_b)$ are graded coordinates in $\mathcal P$. According above we may choose standard graded coordinates $(x_a,\tilde y^s_a, \tilde {\eta}^t_b)$, where $x_a\in \mathcal F_{\mcG_0}$ are as above, and
$$
\tilde y^s_i (Z^s_j)= \delta_{ij},\quad \tilde y^s_i (T^t_j)= 0,\quad  \tilde{\eta}^t_i (T^t_j)= \delta_{ij}, \quad  \tilde{\eta}^t_i (Z^s_j)= 0. 
$$ 
And we denote by the same letter $\tilde y^s_i$ the corresponding elements in $ (S^s(\mathfrak p/\mathfrak p_0))^*$ (or by $\tilde{\eta}^t_i$ the corresponding elements in $ (S^t(\mathfrak p/\mathfrak p_0))^*$).

   Let us show that  the coordinates $(x_a,\tilde y^s_a, \tilde {\eta}^t_b)$  may be expressed  via $(x_a,y^s_a, \eta^t_b)$. This will imply that the system $(x_a,y^s_a, \eta^t_b)$ is a system of local coordinates as well.  
We have
$$
\eta^t_b(T^t_j) =  \Pi^*(\xi_b)(T^t_j) = \xi_b(\Pi'(T^t_j)) = \xi_b (T_j)=\delta_{bj}.
$$
Similarly, checking other values of  $\eta^t_b(A)$ and $\tilde{\eta}^t_b(A)$, where $A\in \mathcal U(\mathfrak p)_t$, we get $\eta^t_b=\tilde{\eta}^t_b$.  Further, the systems of vector fields $(\frac{\partial}{\partial x_a})$ and $(\tilde Z_a)$ are both local bases of the sheaf of vector fields on $\mcG$. Therefore the matrix $A=(\tilde Z_i (x_j))_{red}$ is invertible. Let $B=A^{-1}$. Consider $\sum_a B^i_a {y}^s_a$. Then 
$$
\sum_a B^i_a {y}^s_a (Z^s_j)= \sum_a B^i_a \tilde Z_j(x_a) =
\delta_{ij} = \tilde{y}^s_i(Z^s_j). 
$$
By Lemma \ref{lem functions of degree p} the difference $\sum_a B^i_a {y}^s_a- \tilde{y}^s_i$ is a sum of products of coordinates of degrees smaller than $s$. By Lemma \ref{lem change of coord} and induction of $s$ we conclude that $\tilde{y}^s_i$ may be expressed via ${y}^{s'}_a$. The proof is complete. 
\end{proof}

\subsection{A semicovering of a Lie supergroup with support $\{0,1,\ldots, n\}$}

As we have seen after Definition \ref{def semicovering of a Lie supergroup}, we can construct a semicovering of a Lie supergroup $\mcG$ with support $\{0,1,\ldots, n\}$ using a $\Z^{\geq 0}$-covering of $\mcG$. The definition of the functor $\F$ implies that $\F_n(\mcG)$ is a semicovering of $\mcG$ with support $\{0,1,\ldots, n\}$. 

\begin{remark}
    The functor $\F_n$ is an embedding of the category of Lie supergroups to the category of $\Z$-graded Lie supergroups with support $\{0,1,\ldots, n\}$. Clearly we can recover the Lie supergroup structure on the supermanifold $\mcG$ using the Lie supergroup structure of $\F_n(\mcG)$. 
\end{remark}

\section{Loop superalgebras}\label{sebsec matrix form}

In this section we give an explicit matrix realization of a $\mathbb Z^{\geq 0}$-covering for a finite dimensional Lie superalgebra  and for a matrix Lie supergroup. Let $\mathfrak g$ be a Lie superalgebra and $\mathfrak p:= \F'(\mathfrak g)$. We can easily see that $\mathfrak p$ is a subalgebra of a loop algebra in the sense \cite[Definition 3.1.1]{Allison}, see also \cite{Eld}. Indeed, clearly we have
$$
\mathfrak p \simeq \bigoplus_{i\geq 0} \mathfrak g_{\bar i}\otimes t^i,\quad \mathfrak p_i \simeq \mathfrak g_{\bar i}\otimes t^i,
$$
where $t$ is a formal even variable. (We will get a definition of a loop algebra if we replace $i\geq 0$ by $i\in \Z$.)

Now consider the case $\mathfrak g = \mathfrak{gl}_{m|n}(\mathbb K)$. The Lie superalgebra $\mathfrak p = \F'(\mathfrak g)$ possesses a simple matrix realization. This implies by Ado's Theorem that we have such a matrix realization for any finite dimensional Lie superalgebra $\mathfrak{h}\subset \mathfrak{gl}_{m|n}(\mathbb K)$.  Recall that the general linear Lie superalgebra $\mathfrak{gl}_{m|n}(\mathbb K)$ contains all matrices in the following form
$$
\left(
\begin{array}{cc}
A& B \\
C& D\\
\end{array}\right),
$$
where $A$ is $m\times m$-matrix and  $D$ is $n\times n$-matrix over $\mathbb K$.  Then the Lie superalgebra $\mathfrak p$ contains all matrices in the following form
$$
  \left(
\begin{array}{ccccc}
A_1 & 0 & 0& 0& \cdots\\
C_1& D_1& 0 & 0&\cdots\\
A_2&B_1& A_1 & 0& \cdots\\
 C_2 &D_2& C_1 & D_1& \cdots\\
\cdots& \cdots&  \cdots &  \cdots& \cdots\\
\end{array}\right).
$$
Here $A_i$ are $m\times m$-matrices, $D_i$ are $n\times n$-matrices, $B_i$ are $m\times n$-matrices, $C_i$ are $n\times m$-matrices over $\mathbb K$ and so on.
 We have
$$
\mathfrak p = \bigoplus_{n\geq 0} \mathfrak p_n,\quad  \mathfrak p=  \mathfrak p_{\bar 0} \oplus  \mathfrak p_{\bar 1},
$$
where $\mathfrak p_{\bar 0}$ contains all matrices with $B_i=0$ and $C_j=0$,  $\mathfrak p_{\bar 1}$ contains all matrices with $A_i=0$ and $D_j=0$. The $\mathbb Z$-grading of $\mathfrak p$ has natural meaning:  $\mathfrak p_0$ contains all matrices with $B_i=0$, $C_i=0$ for any $i$, $A_j=0$ and $D_j=0$ for $j>1$;  $\mathfrak p_1$ contains all matrices with $A_i=0$, $D_i=0$ for any $i$, $C_j=0$ and $B_j=0$ for $j>1$.

 Let $\mcG$ be a Lie subsupergroup in the general linear Lie supergroup $\mathrm {GL}_{m|n}(\mathbb K)$. Using our results for the Lie superalgebra $\mathfrak{gl}_{m|n}(\mathbb K)$ we can explicitly construct $\F(\mathrm {GL}_{m|n}(\mathbb K)) $. Indeed, let $\mathrm {GL}_{m|n}(\mathbb K)$ has the following coordinate superdomain $\mathcal U$
$$
\left(
\begin{array}{cc}
X & \Xi\\
\mathrm H& Y\\
\end{array}\right),
$$
where $X\in \mathrm {GL}_{m}(\mathbb K)$, $Y\in \mathrm {GL}_{n}(\mathbb K)$ are matrices of even coordinates of $\mathcal U$ and $\Xi$, $\mathrm H$ are matrices of odd coordinates. Now  $\mathrm F(\mathrm {GL}_{m|n}(\mathbb K))$ has the following coordinate superdomain
$$
\left(
\begin{array}{ccccc}
X_1 & 0 & 0& 0& \cdots\\
\mathrm H_1& Y_1& 0 & 0&\cdots\\
X_2&\Xi_1& X_1 & 0& \cdots\\
\Xi_2&Y_2& \mathrm H_1 & Y_1& \cdots\\
\cdots& \cdots&  \cdots &  \cdots& \cdots\\
\end{array}\right).
$$
$X_i\in \mathrm {GL}_{m}(\mathbb K)$, $Y_j\in \mathrm {GL}_{n}(\mathbb K)$ are matrices of even coordinates of $\mathrm F(\mathrm {GL}_{m|n}(\mathbb K))$ and $\Xi_s$, $\mathrm H_t$ are matrices of odd coordinates. The Lie supergroup multiplication is given by usual matrix multiplication. A similar idea can be used to construct explicitly  $\mathrm F(\mathcal G)$ for any matrix Lie supergoup $\mathcal G$.

We can give a matrix realization of the Lie superalgebras $\F'_n(\g)$, $n\geq 2$.  The Lie superalgebra $\F'_n(\g)$ contains all matrices in the following form
$$
  \left(
\begin{array}{ccccc}
A_1 & 0 & 0& \cdots & 0\\
C_1& D_1& 0 & \cdots &0\\
A_2&B_1& A_1 &  \cdots & 0\\
\cdots& \cdots&  \cdots &  \cdots& \cdots\\
 C_{l} &D_l& C_{l-1} &  \cdots& D_1\\
\end{array}\right),\quad 
\left(
\begin{array}{ccccc}
A_1 & 0 & 0& \cdots & 0\\
C_1& D_1& 0 & \cdots &0\\
A_2&B_1& A_1 &  \cdots & 0\\
\cdots& \cdots&  \cdots &  \cdots& \cdots\\
 A_{r+1} &B_{r}& A_{r} &  \cdots& A_1\\
\end{array}\right),
$$
for $n=2l-1$ and for $n=2r$, respectively.  Similarly for matrix Lie supergroups. Note that in general these constructions are not applicable for supermanifolds. 

\small


\noindent
E.~V.: Departamento de Matem{\'a}tica, Instituto de Ci{\^e}ncias Exatas,
Universidade Federal de Minas Gerais,
Av. Ant{\^o}nio Carlos, 6627, CEP: 31270-901, Belo Horizonte,
Minas Gerais, BRAZIL, Institute of Mathematics, University of Cologne, Weyertal 86-90, 50931 Cologne, GERMANY, and Laboratory of Theoretical and Mathematical Physics, Tomsk State University,
Tomsk 634050, RUSSIA.

\noindent email: {\tt VishnyakovaE\symbol{64}googlemail.com}

M. ~Rotkiewicz: {Department of Mathematics, Warsaw University,  Warsaw, Poland.}

\noindent email: {\tt mrotkiew\symbol{64}mimuw.edu.pl}

\end{document}